\newtheorem{theorem}{Theorem}[section]
\newtheorem{proposition}[theorem]{Proposition}
\newtheorem{lemma}[theorem]{Lemma}
\newtheorem{claim}{Claim}
\newtheorem*{claim*}{Claim}
\theoremstyle{definition}
\newtheorem{definition}[theorem]{Definition}
\newcommand{\restr}{\mbox{\raisebox{.5mm}{$\upharpoonright$}}}
\newcommand{\la}{\langle}
\newcommand{\ra}{\rangle}
\newcommand{\bigset}[1]{\big\{ #1 \big\}}
\newcommand{\ex}{\exists}
\newcommand{\fa}{\forall}
\newcommand{\aaa}{\wedge}
\renewcommand{\leq}{\leqslant}
\renewcommand{\geq}{\geqslant}
\newcommand{\A}{\mathcal{A}}
\newcommand{\B}{\mathcal{B}}
\newcommand{\K}{\mathcal{K}}
\newcommand{\Keff}{\mathcal{K}_2^\mathrm{eff}}
\renewcommand{\O}{\mathcal{O}}
\newcommand{\Piz}{\Pi^0_1}
\newcommand{\OCK}{{\omega_1^\textit{CK}}}
\newcommand{\ord}{\mathrm{ord}}
\newcommand{\PP}{\mbox{$\mathcal P$}}
\DeclareMathOperator{\ran}{\mathrm{ran}}
\newcommand{\andd}{\wedge}
\newcommand{\orr}{\vee}
\newcommand{\imp}{\rightarrow}
\newcommand{\Imp}{\Rightarrow}
\newcommand{\Biimp}{\Leftrightarrow}
\newcommand{\smf}{\smallfrown}
\newcommand{\napprox}{\not\approx}
\newcommand{\nsimeq}{\not\simeq}
\newcommand{\da}{{\downarrow}}
\newcommand{\ua}{{\uparrow}}
\newcommand{\ltKB}{<_\mathrm{KB}}
\newcommand{\leqm}{\leq_\mathrm{m}}
\newcommand{\equivm}{\equiv_\mathrm{m}}
\newcommand{\leqT}{\leq_\mathrm{T}}
   \def\MR#1{}
\begin{document}

\title[Ordinal analysis of pca's]
{Ordinal analysis of partial combinatory algebras}

\author[P. Shafer]{Paul Shafer}
\address[Paul Shafer]{School of Mathematics\\
University of Leeds\\
Leeds\\
LS2 9JT\\
United Kingdom}
\email{p.e.shafer@leeds.ac.uk}
\urladdr{http://www1.maths.leeds.ac.uk/~matpsh/}

\author[S. A. Terwijn]{Sebastiaan A. Terwijn}
\address[Sebastiaan A. Terwijn]{Radboud University Nijmegen\\
Department of Mathematics\\
P.O. Box 9010, 6500 GL Nijmegen, the Netherlands.
} \email{terwijn@math.ru.nl}

\begin{abstract} 
For every partial combinatory algebra (pca), we define a 
hierarchy of extensionality relations using ordinals. 
We investigate the closure ordinals of pca's, i.e.\ the 
smallest ordinals where these relations become equal. 
We show that the closure ordinal of Kleene's first model 
is $\OCK$ and that the closure ordinal of Kleene's second model is~$\omega_1$.
We calculate the exact complexities of the extensionality relations in Kleene's first model, showing that they exhaust the hyperarithmetical hierarchy.  We also discuss embeddings of pca's. 
\end{abstract}

\keywords{partial combinatory algebra, extensionality, 
hyperarithmetical sets, lambda calculus}

\subjclass[2020]{%
03D28, 
03B40, 
03D45  
}

\date{\today}

\maketitle

\section{Introduction}

Partial combinatory algebras (pca's) were introduced by Feferman~\cite{Feferman}
in connection with the study of predicative systems of mathematics. 
Since then, they have been studied as abstract models of computation, 
in the same spirit as combinatory algebras (defined and studied long before, 
in the 1920's, by Sch\"onfinkel and Curry) and the closely related
lambda calculus, cf.\ Barendregt~\cite{Barendregt}. 
As such, pca's figure prominently in the literature on constructive mathematics, 
see e.g.\ Beeson~\cite{Beeson} and 
Troelstra and van Dalen \cite{TroelstravanDalenII}.
This holds in particular for the theory of realizability, see
van Oosten \cite{vanOosten}. 
For example, pca's serve as the basis of various models of 
constructive set theory, first defined by McCarty~\cite{McCarty}.
See Rathjen~\cite{Rathjen} for further developments using this 
construction. 

A pca is a set $\A$ equipped with a partial application operator $\cdot$
that has the same properties as a classical (total) combinatory 
algebra (see Section~\ref{sec:prelim} below for precise definitions). 
In particular, it has the combinators $K$ and $S$. 
Extensionality is a key notion in the lambda calculus and the 
study of its models. 
A pca is called \emph{extensional} if for any of its elements $f$ and $g$,
$f=g$ whenever $fx \simeq gx$ for every~$x$.  
Here $\simeq$ denotes Kleene equality:  either both sides are undefined 
or both sides are defined and equal.
In \cite{BarendregtTerwijn2019} this property was studied in connection 
with generalized numberings. 
Below we define a hierarchy of extensionality relations using ordinals, 
and we determine for various pca's the ordinal for which this hierarchy
becomes stable. In particular we do this for Kleene's 
first model $\K_1$ (which is the setting of ordinary computability
theory) and Kleene's second model~$\K_2$.

The paper is organized as follows. 
In Section~\ref{sec:prelim} we present the necessary preliminaries on pca's. 
In Section~\ref{sec:higher} we define the higher extensionality relations $\sim_\alpha$ and their limit $\approx$, and we prove some of their general properties.
In Section~\ref{sec:ordinal} we present preliminaries on 
constructive ordinals and hyperarithmetical sets. 
In Section~\ref{sec:K1} we show that the closure ordinal 
of Kleene's first model $\K_1$ is $\OCK$, the first nonconstructive 
ordinal.  Thus ${\approx} = {\sim_\OCK}$ in $\K_1$.  We also show that the relation $\approx$ on $\K_1$ is $\Pi^1_1$-complete.
In Section~\ref{sec:arithmetical} and Section~\ref{sec:hyp} we calculate the complexities of the relations $\sim_\alpha$ on $\K_1$ for $0 < \alpha < \OCK$.  Specifically, we show that for $0 < \alpha < \OCK$:
\begin{itemize}
\item the relation $\sim_\alpha$ is $\Pi^0_{1 + F(\alpha)}$-complete if $\alpha$ is a successor ordinal, and
\item the relation $\sim_\alpha$ is $\Sigma^0_{1 + F(\alpha)}$-complete if $\alpha$ is a limit ordinal,
\end{itemize}
where $F$ is the function given by
\begin{align*}
F(0) &= 0 & \\ 
F(\alpha + d + 1) &= F(\alpha) + 1 & &\text{if $d < \omega$ and either $\alpha = 0$ or $\alpha$ is a limit ordinal}\\
F(\alpha) &= \lim_{\beta < \alpha}(F(\beta) + 1)& &\text{if $\alpha$ is a limit ordinal}.
\end{align*}
The proof is not uniform.  Section~\ref{sec:arithmetical} handles ordinals $0 < \alpha < \omega^2$, which correspond to arithmetical complexities, while Section~\ref{sec:hyp} handles ordinals $\omega^2 \leq \alpha < \OCK$, which correspond to non-arithmetical complexities.
In Section~\ref{sec:embeddings} we study embeddings of pca's. 
In Section~\ref{sec:K2} we show that the closure ordinal 
for Kleene's second model $\K_2$ is $\omega_1$
and that the closure ordinal of its effective version $\Keff$ is again~$\OCK$.

Our notation from computability theory is mostly standard.
In the following, $\omega$ denotes the natural numbers, and 
$\Phi_e$ denotes the $e$\textsuperscript{th} partial computable (p.c.) function. 
For unexplained notions from computability theory we refer to 
Odifreddi~\cite{Odifreddi} and Soare~\cite{SoareBookRE}.
For background on the lambda calculus we refer to Barendregt~\cite{Barendregt}.

\section{Preliminaries on pca's} \label{sec:prelim}

A \emph{partial applicative structure} is a set $\A$ 
with a partial application operator $\cdot$, which is 
a partial map from $\A\times\A$ to $\A$. 
We mostly write $ab$ instead of $a \cdot b$, though we do sometimes include the $\cdot$ in places where it improves readability. 
When $ab$ is defined, i.e.\ when the pair $(a,b)$ is in the domain of the application operator, we write $ab\da$.  If $ab$ is not defined, we write $ab\ua$.
An element $f\in\A$ is \emph{total} if 
$fa\da$ for every $a\in\A$, and $\A$ is \emph{total} if all of its elements are total.
Application associates to the left, and we write 
$abc$ instead of $(ab)c$.

The set of terms over $\A$ is defined inductively as follows:
\begin{itemize}
\item every element $a\in\A$ is a term,
\item every variable $v$ is a term, and
\item if $s$ and $t$ are terms, then $(s \cdot t)$ is a term. 
\end{itemize}
As usual, a term $t$ is \emph{closed} if it does not contain any 
variables.
We use Kleene equality for closed terms:
$s \simeq t$ means that either both $s$ and $t$ are undefined, 
or both are defined (meaning all their subterms are defined)
and evaluate to the same element of $\A$. 

The property that makes a partial applicative structure 
a partial combinatory algebra is \emph{combinatory completeness}, 
which loosely means that every multivariate function defined by a term 
is represented by an element of the algebra. 
Feferman~\cite{Feferman} proved that this is equivalent to the 
existence of the classical combinators $K$ and $S$, so we can 
also take this as a definition.

\begin{definition}\label{def-pca}
A partial applicative structure $\A$ is a \emph{partial combinatory algebra} (\emph{pca}) if it has elements $K$ and $S$
with the following properties for all $a,b,c\in\A$:
\begin{itemize}

\item $Kab\da = a$, 

\item $Sab\da$ and $Sabc \simeq ac(bc)$.

\end{itemize}
\end{definition}

The most important example of a pca is \emph{Kleene's first model} $\K_1$, 
consisting of $\omega$ with application defined as $n\cdot m = \Phi_n(m)$.
The combinators $K$ and $S$ can easily be defined using the 
S-m-n-theorem.
For any $X \subseteq \omega$, relativizing the partial computable functions to oracle $X$ yields a pca $\K_1^X$ with application given by $n \cdot m = \Phi_n^X(m)$.

\emph{Kleene's second model} $\K_2$ \cite{KleeneVesley}
consists of Baire space $\omega^\omega$, 
with application $f\cdot g$ defined by applying the continuous 
functional with code $f$ to the input $g$. 
For details of the coding see Longley and Normann~\cite{LongleyNormann}*{Section~3.2.1}.
It is a bit more work to define the combinators $K$ and $S$ in this case. 
For the material below, the precise details of the coding are 
largely irrelevant. 
An alternative coding of $\K_2$ can be given as follows.
Viewing $\Phi_e^f$ as a function of the oracle $f \in \omega^\omega$, we define application as 
\begin{equation} \label{alt}
f\cdot g = \Phi^{f\oplus g}_{f(0)},
\end{equation}
where it is understood that $f\cdot g$ is defined 
if and only if the function computed on the right is total.  
This coding is different from the standard definition of $\K_2$, 
but equivalent to it in the following sense. 
In both codings, every partial continuous functional on $\omega^\omega$ 
is represented, and the same holds for multivariate functions 
on $\omega^\omega$. Furthermore, effective translations from 
one coding to the other can be given in both directions. 
(The latter fact does not seem to imply that \eqref{alt}
is also a pca in a straightforward way, but this can be 
proved directly in much the same way as for the standard 
coding, by providing definitions of the combinators 
$K$ and $S$.)

\section{Higher extensionality} \label{sec:higher}

The notion of extensionality plays an important part in the lambda calculus, 
cf.\ \cite{Barendregt}. 
In \cite{BarendregtTerwijn} various notions of extensionality were studied 
in connection with the precompleteness of numberings based on pca's. 
Here we define a hierarchy of extensionality relations using ordinals as follows. 

\begin{definition} \label{hierarchy}
Given a pca $\A$, we define equivalence relations $\sim_\alpha$ on the closed terms over $\A$ for all ordinals $\alpha$.  For all closed terms $s$ and $t$ over $\A$, define:
\begin{align*}
s \sim_0 t &\Longleftrightarrow s \simeq t  \\
s \sim_1 t &\Longleftrightarrow \fa x\in\A \; s x \simeq t x \\
s \sim_{\alpha+1} t &\Longleftrightarrow \fa x\in\A \; s x \sim_\alpha t x \\
s \sim_\alpha t &\Longleftrightarrow \ex \beta<\alpha \; s \sim_\beta t &&\text{for $\alpha$ a limit ordinal} 
\end{align*}
\end{definition}
We sometimes restrict $\sim_\alpha$ to $\A$ and consider it as a relation on the elements of $\A$ rather than on the closed terms over $\A$.  Notice that $\A$ is extensional if and only if the relations $\sim_0$ and $\sim_1$ are equal when restricted to $\A$.  
To see that the relations $\sim_\alpha$ are indeed equivalence relations, see 
the discussion following Theorem~\ref{thm:basic}.

Under Definition~\ref{hierarchy} it is possible that an undefined term 
is equivalent to a defined one. 
For example, if $t\ua$ and $s$ is such that $s\da$ but 
$\fa x \; sx\ua$, then $s \sim_1 t$.

For any ordinal $\alpha$, a straightforward induction on $n \in \omega$ shows that, for any closed terms $s$ and $t$,
\begin{align*}
s \sim_{\alpha + n} t &\Longleftrightarrow \fa x_0, \dots, x_{n-1} (s x_0 \cdots x_{n-1} \sim_\alpha t x_0 \cdots x_{n-1}).
\intertext{In particular,}
s \sim_n t &\Longleftrightarrow \fa x_0, \dots, x_{n-1} (s x_0 \cdots x_{n-1} \simeq t x_0 \cdots x_{n-1})
\end{align*}
for every $n$.

Also observe that $s \sim_n t \Rightarrow s \sim_{n+1} t$ for every~$n$.  Theorem~\ref{thm:omega} below shows that the converse fails for non-extensional pca's, such as $\K_1$ and $\K_2$.  More generally, for every pca we have that $s \sim_\beta t \Rightarrow s \sim_\alpha t$ whenever $\beta \leq \alpha$.  Thus the $\sim_\alpha$ form an ascending sequence of relations.

\begin{theorem} \label{thm:basic}
The following hold for every pca $\A$, every pair of closed terms $s$ and $t$ over $\A$, and every ordinal $\alpha$.
\begin{enumerate}[(i)]

\item\label{it-UpOne} $s \sim_\alpha t \Rightarrow s x \sim_\alpha t x$ for all~$x\in\A$. 

\item\label{it-PresUp} $s \sim_\beta t \Rightarrow s \sim_\alpha t$ for all ordinals 
$\alpha\geq\beta$.

\end{enumerate}
\end{theorem}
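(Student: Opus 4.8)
The plan is to prove both statements by induction on $\alpha$, with the two parts handled together since (i) is needed to push the induction for (ii) through limit stages. For part (i), the base cases $\alpha = 0$ and $\alpha = 1$ are immediate from the definitions: $s \sim_0 t$ means $s \simeq t$, so $sx \simeq tx$; and $s \sim_1 t$ says exactly $\fa y\, sy \simeq ty$, which in particular gives $sx \simeq tx$, i.e.\ $sx \sim_0 tx$, but we actually want $sx \sim_1 tx$ — so even the base case requires a small argument. The cleanest route is to prove first the successor step in the form $s \sim_{\alpha+1} t \Rightarrow sx \sim_{\alpha+1} tx$: unfolding, $s \sim_{\alpha+1} t$ means $\fa y\, sy \sim_\alpha ty$, so for any $z$, $(sx)z = s(xz)\cdots$ — wait, that is not how application parses. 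Instead, observe directly that $s \sim_{\alpha+1} t$ gives $\fa y\, sy \sim_\alpha ty$; applying this is not literally $sx \sim_{\alpha+1} tx$, which would require $\fa y\, (sx)y \sim_\alpha (tx)y$. So the honest approach is: prove (i) by showing $s \sim_{\alpha+1} t \Rightarrow \fa x\, sx \sim_{\alpha+1} tx$ using the characterization $s \sim_{\alpha+1} t \Leftrightarrow \fa x\, sx \sim_\alpha tx$ together with the already-established monotonicity $\sim_\alpha\, \subseteq\, \sim_{\alpha+1}$ (stated in the excerpt). Then $s \sim_{\alpha+1} t$ implies $sx \sim_\alpha tx$ implies $sx \sim_{\alpha+1} tx$ by monotonicity — but that is monotonicity in the subscript, which is exactly part (ii) at a lower level. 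This circularity is avoided by doing the joint induction in the right order.

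Here is the structure I would actually write. Fix the ordinal $\alpha$ and assume both (i) and (ii) hold for all ordinals $\gamma < \alpha$; I prove them for $\alpha$.

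\textbf{Part (ii) at stage $\alpha$.} Suppose $s \sim_\beta t$ with $\beta \leq \alpha$; we want $s \sim_\alpha t$. If $\beta = \alpha$ there is nothing to prove, so assume $\beta < \alpha$. If $\alpha = \gamma + 1$ is a successor, then $\beta \leq \gamma$, so by the inductive hypothesis (ii) at $\gamma$ we get $s \sim_\gamma t$, and then $s \sim_{\gamma+1} t$ follows from the one-step fact $s \sim_\gamma t \Rightarrow s \sim_{\gamma+1} t$ (noted in the excerpt, and itself proved from (i) at $\gamma$: from $s \sim_\gamma t$ we get $sx \sim_\gamma tx$ for all $x$ by (i), which is precisely $s \sim_{\gamma+1} t$). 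If $\alpha$ is a limit ordinal, then $s \sim_\beta t$ with $\beta < \alpha$ is literally the defining clause for $s \sim_\alpha t$. Either way, $s \sim_\alpha t$.

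\textbf{Part (i) at stage $\alpha$.} Suppose $s \sim_\alpha t$; fix $x \in \A$ and show $sx \sim_\alpha tx$. For $\alpha = 0$: $s \simeq t$ gives $sx \simeq tx$, so $sx \sim_0 tx$. For $\alpha = \gamma + 1$: by definition $s \sim_{\gamma+1} t$ means $\fa y\, sy \sim_\gamma ty$, so in particular $sx \sim_\gamma tx$, and by part (ii) at stage $\gamma+1$ (just proved) applied with $\beta = \gamma \leq \gamma+1$, this upgrades to $sx \sim_{\gamma+1} tx$. For $\alpha$ a limit: $s \sim_\alpha t$ means $s \sim_\beta t$ for some $\beta < \alpha$; by inductive hypothesis (i) at $\beta$, $sx \sim_\beta tx$, and since $\beta < \alpha$ this is the defining clause for $sx \sim_\alpha tx$. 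This completes the induction.

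\textbf{Main obstacle.} The only real subtlety is getting the dependency order right between (i) and (ii) so the argument is not circular: part (i) at a successor stage $\gamma+1$ uses part (ii) \emph{at the same stage} $\gamma+1$ (to absorb $\sim_\gamma$ into $\sim_{\gamma+1}$), while part (ii) at stage $\gamma+1$ only needs (i) and (ii) \emph{below} $\gamma+1$. So within each stage $\alpha$ one must prove (ii) before (i). Once this ordering is respected, every step is a routine unfolding of Definition~\ref{hierarchy}, and the limit cases are immediate from the limit clause. No transfinite machinery beyond ordinary induction on $\alpha$ is needed.
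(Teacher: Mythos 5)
Your proof is correct. The one place where you and the paper genuinely diverge is the successor case of (i), and it changes the overall shape of the argument. The paper proves (i) completely on its own by a single transfinite induction, with no appeal to (ii): at stage $\alpha+1$, from $s \sim_{\alpha+1} t$ they unfold to $\fa x\, sx \sim_\alpha tx$, then apply the induction hypothesis for (i) at level $\alpha$ to \emph{each} pair $(sx, tx)$, yielding $\fa x \fa y\, sxy \sim_\alpha txy$, which is literally $\fa x\, sx \sim_{\alpha+1} tx$, done. Part (ii) is then a separate, short induction that quotes (i). You instead reach $sx \sim_\gamma tx$ and upgrade it to $sx \sim_{\gamma+1} tx$ via monotonicity, which is (ii) at the same stage $\gamma+1$; that forces you into a simultaneous induction with the within-stage order (ii)-then-(i). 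Your dependency analysis is right and the circularity is genuinely avoided, so the proof goes through. The trade-off: your version requires the extra bookkeeping of a joint induction with an internal ordering, while the paper's version buys a fully sequential presentation by noticing that applying (i)'s induction hypothesis one extra time does the job of the monotonicity step you reached for. If you want to tighten your write-up, adopt that ``apply (i) at $\alpha$ twice'' trick and you can decouple the two parts entirely.
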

\begin{proof}
Note that we can rephrase~\ref{it-UpOne} as 
$s \sim_\alpha t \Rightarrow s \sim_{\alpha+1} t$.
We prove this by transfinite induction on $\alpha$. 
For $\alpha=0$ this is clear because $s \simeq t$ implies $s x \simeq t x$ 
for every~$x$.
For a successor $\alpha+1$, suppose that $s \sim_{\alpha+1} t$, i.e.\ 
$\fa x \; s x \sim_\alpha t x$. By the induction hypothesis, we then have 
$\fa x \fa y \; s x y \sim_\alpha t x y$, hence $s x \sim_{\alpha+1} t x$ for every~$x$.
Finally, suppose that $s \sim_\alpha t$ for $\alpha$ a limit. 
Then $s \sim_\beta t$ for some $\beta<\alpha$, so by the induction hypothesis
$s x \sim_\beta t x$ for every $x$, hence $s x \sim_\alpha t x$ for every $x$ by the definition 
of $\sim_\alpha$.

For~\ref{it-PresUp}, we again proceed by transfinite induction on $\alpha$. 
For $\alpha=0$ this is trivial. 
The case where $\alpha$ is a successor follows from~\ref{it-UpOne} and the induction hypothesis. 
Finally, if $\alpha$ is a limit and $s \sim_\beta t$ with $\beta \leq \alpha$, 
then $s \sim_\alpha t$ by the definition of $\sim_\alpha$.
\end{proof}

Given a pca $\A$, we can use Theorem~\ref{thm:basic} and transfinite induction on $\alpha$ to verify that each $\sim_\alpha$ is an equivalence relation.  It is easy to see that $\sim_0$ is an equivalence relation.  That $\sim_{\alpha + 1}$ is an equivalence relation follows straightforwardly from the induction hypothesis.  When $\alpha$ is a limit, that $\sim_\alpha$ is reflexive and symmetric also follows straightforwardly from the induction hypothesis.  For transitivity, suppose that $r \sim_\alpha s \sim_\alpha t$.  Then there are $\beta_0, \beta_1 < \alpha$ such that $r \sim_{\beta_0} s$ and $s \sim_{\beta_1} t$.  Let $\beta = \max\{\beta_0, \beta_1\}$.  We then have that $\beta < \alpha$, that $r \sim_\beta s \sim_\beta t$ by Theorem~\ref{thm:basic}, and thus that $r \sim_\beta t$ by the induction hypothesis.  So $r \sim_\alpha t$.  

\begin{lemma}\label{lem-AddK}
Let $\A$ be a pca, and let $s$ and $t$ be closed terms over $\A$.  Then for every ordinal $\alpha$, $s \sim_\alpha t$ if and only if $K s \sim_{\alpha + 1} K t$.
\end{lemma}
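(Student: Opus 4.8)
The plan is to base everything on the single computation that, for any closed term $s$ over $\A$ and any $x\in\A$, one has $Ksx \simeq s$. Indeed, if $s$ is defined with value $a$, then by the defining property of $K$ the term $Ka$ is defined and $Kax\da = a$, so $Ksx$ is defined with value $a$, which is the value of $s$; and if $s$ is undefined, then $Ks$ is undefined (a term evaluates only when all of its subterms do), hence so is $Ksx$, and again $Ksx \simeq s$. The same observation applied to $t$ gives $Ktx \simeq t$ for every $x\in\A$.

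The second ingredient is that each $\sim_\alpha$ is a congruence with respect to Kleene equality, i.e.\ invariant under replacing terms by Kleene-equal ones: if $s \simeq s'$ and $t \simeq t'$, then $s \sim_\alpha t$ if and only if $s' \sim_\alpha t'$. I would prove this by transfinite induction on $\alpha$. The base case $\alpha = 0$ is just symmetry and transitivity of $\simeq$. For a successor $\beta+1$, note that $s \simeq s'$ forces $sx \simeq s'x$ for every $x$ (split on whether $s$ is defined), and likewise $tx \simeq t'x$, so the induction hypothesis applied to $sx, tx, s'x, t'x$ gives $sx \sim_\beta tx \Leftrightarrow s'x \sim_\beta t'x$ for each $x$, whence $s \sim_{\beta+1} t \Leftrightarrow s' \sim_{\beta+1} t'$. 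For a limit $\alpha$, the equivalence is immediate from the induction hypothesis and the definition of $\sim_\alpha$ as the union of the $\sim_\beta$ with $\beta < \alpha$.

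With these two facts in hand the lemma follows at once: for every ordinal $\alpha$,
\[
Ks \sim_{\alpha+1} Kt \iff \fa x\in\A\; Ksx \sim_\alpha Ktx \iff \fa x\in\A\; s \sim_\alpha t \iff s \sim_\alpha t,
\]
where the first equivalence is the definition of $\sim_{\alpha+1}$, the second uses $Ksx \simeq s$, $Ktx \simeq t$ together with the $\simeq$-invariance of $\sim_\alpha$, and the last holds because $\A$ is nonempty, containing $K$ and $S$.

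I do not expect a genuine obstacle. The only delicate point is the interaction with undefined terms: since $Ksx \simeq s$ even when $s$ is undefined, and since $\A$ may contain "nowhere defined" elements, one must know that $\sim_\alpha$ cannot distinguish two undefined terms — which is precisely what the $\simeq$-congruence of $\sim_\alpha$ supplies. This is why I would isolate that congruence as a preliminary claim rather than attempt a direct transfinite induction on the statement of the lemma, which does not close up cleanly because after applying an argument the terms $Ksx$ and $Ktx$ are no longer literally of the form $Ks'$ and $Kt'$.
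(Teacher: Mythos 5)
Your proof is correct and takes essentially the same approach as the paper: both hinge on the key observation that $Ksx \simeq s$ for every closed term $s$ and every $x \in \A$ (including when $s\ua$), and then chain this through $\sim_\alpha$. The only presentational difference is that you isolate the $\simeq$-congruence of $\sim_\alpha$ as a standalone claim proved by transfinite induction, whereas the paper gets the same effect by invoking the already-established facts that $\sim_0 \subseteq \sim_\alpha$ (Theorem~\ref{thm:basic}) and that $\sim_\alpha$ is transitive, writing the chain $Ksx \sim_0 s \sim_\alpha t \sim_0 Ktx$ directly.
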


\begin{proof}
Observe that if $t$ is a closed term and $t \ua$, then $K t \ua$ as well, and therefore also $K t x \ua$ for every $x \in \A$.  Thus $K t x \simeq t$ for every closed term $t$ and every $x \in \A$, even when $t \ua$.

Let $s$ and $t$ be closed terms.  First suppose that $s \sim_\alpha t$.  Then
\begin{align*}
K s x \sim_0 s \sim_\alpha t \sim_0 K t x
\end{align*}
for every $x$, so $K s \sim_{\alpha + 1} K t$.

Conversely, suppose that $K s \sim_{\alpha + 1} K t$.  Choose any $x \in \A$.  Then 
\begin{align*}
s \sim_0 K s x \sim_\alpha K t x \sim_0 t,
\end{align*}
so $s \sim_\alpha t$.
\end{proof}

\begin{theorem} \label{thm:omega}
Suppose that $\A$ is not extensional. Then
the relations $\sim_\alpha$ for $\alpha \leq \omega$   
are all different, even when restricted to $\A$.
\end{theorem}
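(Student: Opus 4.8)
The plan is to show that if $\A$ is not extensional, then for every $n < \omega$ the relation $\sim_n$ is strictly finer than $\sim_{n+1}$ (as relations on $\A$), and that $\sim_\omega$ is strictly finer than none of these but still distinct — actually, since $\sim_\omega = \bigcup_{n<\omega}\sim_n$ and each containment $\sim_n \subsetneq \sim_{n+1}$ is strict, distinctness of all $\sim_\alpha$ for $\alpha \le \omega$ follows once we know $\sim_n \ne \sim_{n+1}$ for each $n$ together with $\sim_n \ne \sim_\omega$ for each $n$. So the core task is: produce, for each $n$, closed terms (or elements) $s,t$ with $s \sim_{n+1} t$ but $s \not\sim_n t$. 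The starting point is the hypothesis: since $\A$ is not extensional, there exist $f, g \in \A$ with $f \ne g$ but $fx \simeq gx$ for all $x$; that is, $f \sim_1 g$ but $f \not\sim_0 g$. This handles $n = 0$.

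For the inductive step I would use Lemma~3.4 (the $K$-lemma). Suppose we have $s,t$ with $s \sim_{n+1} t$ and $s \not\sim_n t$. Set $s' = Ks$ and $t' = Kt$. By Lemma~3.4, $s \sim_{n+1} t$ gives $Ks \sim_{n+2} Kt$, i.e.\ $s' \sim_{n+2} t'$; and the contrapositive direction of Lemma~3.4 (if $Ks \sim_{n+1} Kt$ then $s \sim_n t$) gives $s' \not\sim_{n+1} t'$ from $s \not\sim_n t$. Iterating from the base case $n=0$ produces witnesses $K^n f \sim_{n+1} K^n g$ and $K^n f \not\sim_n K^n g$ for every $n$, where $K^n f$ denotes $f$ prefixed by $n$ copies of $K$. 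Note these are genuine elements of $\A$ (since $K a \da$ for every $a$, $Kf$ is an element, and inductively so is $K^n f$), so the separation happens already on $\A$, not merely on closed terms, as the statement demands. This shows $\sim_n \ne \sim_{n+1}$ on $\A$ for all $n < \omega$.

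It remains to separate $\sim_\omega$ from each $\sim_n$. Fix $n$. We want $s,t \in \A$ with $s \sim_\omega t$ but $s \not\sim_n t$; it suffices to take the pair $K^n f, K^n g$ from above, since $K^n f \sim_{n+1} K^n g$ implies $K^n f \sim_\omega K^n g$ by Theorem~3.3(ii), while $K^n f \not\sim_n K^n g$. Hence $\sim_\omega \ne \sim_n$ for every $n$. Combining everything: the relations $\sim_0, \sim_1, \dots, \sim_n, \dots, \sim_\omega$ are pairwise distinct on $\A$, because for $m < n \le \omega$ the witnesses $K^m f, K^m g$ lie in $\sim_{m+1} \subseteq \sim_n$ but not in $\sim_m$.

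I do not expect a serious obstacle here; the only point requiring a little care is the bookkeeping in the inductive step — making sure one is applying Lemma~3.4 in the right direction and with the right indices — and confirming that $K^n f$ is always a bona fide element of $\A$ so that the conclusion holds on $\A$ and not just on closed terms. The monotonicity from Theorem~3.3(ii) does the rest for the limit stage essentially for free.
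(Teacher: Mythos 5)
Your proof is correct and follows essentially the same route as the paper: take non-extensionality witnesses $f \not\sim_0 g$, $f \sim_1 g$, iterate the $K$-combinator via Lemma~\ref{lem-AddK} to get $K^n f \not\sim_n K^n g$ but $K^n f \sim_{n+1} K^n g$, and then observe these same witnesses separate $\sim_\omega$ from each $\sim_n$. The only minor difference is that you handle the limit stage directly (exhibiting the witness pair), whereas the paper argues by contradiction; the substance is identical.
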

\begin{proof}
Since $\A$ is not extensional,  
the relations $\sim_0$ and $\sim_1$ differ on $\A$.
Suppose that $d,e\in \A$ are such that $d\neq e$ but $d\sim_1 e$.

Inductively define elements $f_n$ and $g_n$ of $\A$ for $n \in \omega$ as follows,
using the combinator~$K$.
\begin{align*}
f_0 &= d & g_0 &= e \\
f_{n+1} &= K f_n & g_{n+1} &= K g_n
\end{align*}

We have that $f_0 \not\sim_0 g_0$ and $f_0 \sim_1 g_0$ by the choice of 
$f_0$ and $g_0$.  By induction on $n$ and Lemma~\ref{lem-AddK}, it follows that 
$f_n \not\sim_n g_n$ and $f_n \sim_{n+1} g_n$ for every $n$.  Thus $\sim_{n+1}$ is a strictly weaker relation than $\sim_n$ for every $n$.

It also follows that $\sim_\omega$ strictly weaker than $\sim_n$ for every~$n$.
Suppose that there is a fixed $n$ such that $f\sim_\omega g \Rightarrow f\sim_n g$ for every $f$ and $g$.
Then
$$
f\sim_{n+1} g \Rightarrow f\sim_\omega g \Rightarrow f\sim_n g 
$$
for every $f$ and $g$, 
contradicting that $\sim_{n+1}$ is strictly weaker than $\sim_n$.
\end{proof}

However, for cardinality reasons, the relations $\sim_\alpha$ cannot all be different on a given pca.

\begin{theorem} \label{thm:fix}
For every pca $\A$, there is an ordinal $\alpha$ such that $\sim_\alpha$ is equal to $\sim_{\alpha + 1}$.  Furthermore, the least such $\alpha$ is either $0$ or a limit ordinal.
\end{theorem}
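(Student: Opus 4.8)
The plan is to obtain a fixed point by a cardinality argument and then to use Lemma~\ref{lem-AddK} to rule out that the least fixed point is a successor ordinal.

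First, by Theorem~\ref{thm:basic} we have $s \sim_\beta t \Rightarrow s \sim_\alpha t$ whenever $\beta \leq \alpha$, so, viewing each $\sim_\alpha$ as a set of pairs of closed terms, the transfinite sequence $(\sim_\alpha)_\alpha$ is weakly increasing. Let $\mathcal{T}$ denote the set of closed terms over $\A$ and set $\kappa = |\mathcal{T} \times \mathcal{T}|$. I claim that $\sim_\alpha \,=\, \sim_{\alpha + 1}$ for some $\alpha < \kappa^{+}$. Indeed, if not, then for every $\alpha < \kappa^{+}$ one may choose a pair $(s_\alpha, t_\alpha) \in {\sim_{\alpha + 1}} \setminus {\sim_\alpha}$. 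For $\alpha < \beta < \kappa^{+}$ we then have $(s_\alpha, t_\alpha) \in {\sim_{\alpha + 1}} \subseteq {\sim_\beta}$ while $(s_\beta, t_\beta) \notin {\sim_\beta}$, so $(s_\alpha, t_\alpha) \neq (s_\beta, t_\beta)$; thus $\alpha \mapsto (s_\alpha, t_\alpha)$ is an injection of $\kappa^{+}$ into $\mathcal{T} \times \mathcal{T}$, contradicting $|\mathcal{T} \times \mathcal{T}| = \kappa$. Hence a fixed point exists, and by the well-ordering of the ordinals there is a least ordinal $\alpha_0$ with $\sim_{\alpha_0} \,=\, \sim_{\alpha_0 + 1}$.

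It remains to show that $\alpha_0$ is $0$ or a limit. Suppose toward a contradiction that $\alpha_0 = \gamma + 1$ for some ordinal $\gamma$. Applying Lemma~\ref{lem-AddK} with the ordinal $\gamma$ and then with the ordinal $\gamma + 1$ yields, for all closed terms $s$ and $t$,
\[
s \sim_\gamma t \iff K s \sim_{\gamma + 1} K t
\qquad\text{and}\qquad
s \sim_{\gamma + 1} t \iff K s \sim_{\gamma + 2} K t .
\]
Since $\sim_{\gamma + 1} \,=\, \sim_{\gamma + 2}$ by the choice of $\alpha_0 = \gamma + 1$, the two right-hand sides are equivalent, whence $s \sim_\gamma t \iff s \sim_{\gamma + 1} t$ for all $s$ and $t$. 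Therefore $\sim_\gamma \,=\, \sim_{\gamma + 1}$, contradicting the minimality of $\alpha_0$. So $\alpha_0$ is either $0$ or a limit ordinal.

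The only real content is the last paragraph: the cardinality bound is the routine observation that a strictly increasing transfinite chain of subsets of a fixed set has length at most the successor of that set's cardinality, whereas the descent from $\gamma + 1$ to $\gamma$ relies essentially on the combinator $K$ via Lemma~\ref{lem-AddK}. I expect no serious obstacle beyond fitting these two ingredients together; it may also be worth recording separately (although it is not needed for this statement) that $\sim_\alpha \,=\, \sim_{\alpha + 1}$ implies $\sim_\alpha \,=\, \sim_\beta$ for all $\beta \geq \alpha$, by a short transfinite induction, so that $\alpha_0$ really deserves to be called the closure ordinal.
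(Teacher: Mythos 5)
Your proof is correct and takes essentially the same approach as the paper: a cardinality bound using the weakly increasing chain $({\sim_\alpha})_\alpha$ to obtain a fixed point below $|\mathcal{T}\times\mathcal{T}|^+$, followed by Lemma~\ref{lem-AddK} to descend from a successor fixed point $\gamma+1$ to $\gamma$, contradicting minimality. The only cosmetic difference is that you phrase the cardinality step as an explicit injection of $\kappa^+$ into $\mathcal{T}\times\mathcal{T}$ and apply Lemma~\ref{lem-AddK} as two biconditionals rather than a single chain, but the argument is the same.
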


\begin{proof}
Let $\Omega$ denote the set of closed terms over $\A$, and view each relation $\sim_\alpha$ as a subset of $\Omega \times \Omega$.  Theorem~\ref{thm:basic} tells us that these relations form an ascending sequence:  $\alpha \leq \gamma \Rightarrow {\sim_\alpha} \subseteq {\sim_\gamma}$ for all ordinals $\alpha$ and $\gamma$.  Therefore there must be an $\alpha < |\Omega \times \Omega|^+$ for which ${\sim_\alpha} = {\sim_{\alpha+1}}$ because ${\sim_\alpha} \neq {\sim_{\alpha+1}}$ implies that there is an $(s, t) \in \Omega \times \Omega$ such that $s \sim_{\alpha+1} t$ but $(\forall \beta \leq \alpha)(s \not\sim_\beta t)$, and there are only $|\Omega \times \Omega|$ many elements of $\Omega \times \Omega$ to choose among.

Now suppose that ${\sim_{\beta+1}} = {\sim_{\beta+2}}$ for some ordinal $\beta$.  We show that ${\sim_\beta} = {\sim_{\beta+1}}$ as well.  It follows that the least $\alpha$ such that ${\sim_\alpha} = {\sim_{\alpha+1}}$ cannot be a successor.  Consider closed terms $s$ and $t$.  We already know that $s \sim_\beta t \Imp s \sim_{\beta+1} t$.  So suppose that $s \sim_{\beta+1} t$.  Then $K s \sim_{\beta+2} K t$ by Lemma~\ref{lem-AddK}, so $K s \sim_{\beta+1} K t$ by the assumption that ${\sim_{\beta+1}} = {\sim_{\beta+2}}$.  Thus $s \sim_\beta t$, again by Lemma~\ref{lem-AddK}.  Therefore $s \sim_{\beta+1} t \Imp s \sim_\beta t$, so ${\sim_\beta} = {\sim_{\beta+1}}$.

Thus there is an $\alpha$ such that ${\sim_\alpha} = {\sim_{\alpha+1}}$, and the least such $\alpha$ is either $0$ or a limit ordinal.
\end{proof}

\begin{definition}\label{def-OrdA}
For a pca $\A$, define $\ord(\A)$ to be the least ordinal $\alpha$ such that ${\sim_\alpha} = {\sim_{\alpha + 1}}$.
\end{definition}

Let $\A$ be a pca.  Observe that if ${\sim_\alpha} = {\sim_{\alpha+1}}$, then an easy transfinite induction on $\gamma \geq \alpha$ shows that ${\sim_\gamma} = {\sim_\alpha}$ for all $\gamma \geq \alpha$.  Therefore $\ord(\A)$ is also the least ordinal $\alpha$ such that $(\forall \gamma \geq \alpha)({\sim_\gamma} = {\sim_\alpha})$.

For closed terms $s$ and $t$, write $s \approx t$ if there is a $\gamma$ such that $s \sim_\gamma t$.  Let $\alpha = \ord(\A)$.  Then $s \approx t \Biimp s \sim_\alpha t$.  That $s \sim_\alpha t \Imp s \approx t$ is clear from the definition of $\approx$.  Conversely, suppose that $s \approx t$.  
Then there is a least $\beta$ such that $s \sim_\beta t$.  
As $(\forall \gamma \geq \alpha)({\sim_\gamma} = {\sim_\alpha})$, 
we must have $\beta \leq \alpha$, and hence $s \sim_\alpha t$ by Theorem~\ref{thm:basic}.  
Thus $s \approx t \Imp s \sim_\alpha t$, so $s \approx t \Biimp s \sim_\alpha t$.  Therefore ${\approx}$ and ${\sim_\alpha}$ are the same relation.  Furthermore, if $\alpha > 0$, then it is a limit ordinal, and so for any particular closed terms $s$ and $t$, we have that $s \approx t$ if and only if there is a $\beta < \alpha$ such that $s \sim_\beta t$.

If $\A$ is a pca and $t$ is a closed term, then there is an $f \in \A$ with $f \sim_1 t$.  This follows from the combinatory completeness of $\A$ because $\A$ must contain an element $f$ representing the term $t v$, where $v$ is a variable.  Alternatively, one may argue as follows.  If $t \da$, then $t$ evaluates to some $f \in \A$.  Thus $f \sim_0 t$, so $f \sim_1 t$.  If $t \ua$, then $\A$ must have non-total elements.  In fact, there must be an $f \in \A$ such that $f x \ua$ for all $x \in \A$, in which case $f \sim_1 t$.  Therefore, if $\gamma > \alpha \geq 1$ and ${\sim_\alpha} \neq {\sim_\gamma}$, then this inequality is witnessed by members of $\A$.  If ${\sim_\alpha} \neq {\sim_\gamma}$, then there are closed terms $s$ and $t$ such that 
$s \not\sim_\alpha t$ but $s \sim_\gamma t$.  Let $f$ and $g$ be elements of $\A$ where $f \sim_1 s$ and $g \sim_1 t$.  As $\alpha \geq 1$, we also have that $f \sim_\alpha s$ and $g \sim_\alpha t$, 
so $f \not\sim_\alpha g$.
Likewise, $f \sim_\gamma s \sim_\gamma t \sim_\gamma g$, so $f \sim_\gamma g$.  So $f$ and $g$ are elements of $\A$ such that $f \not\sim_\alpha g$ but $f \sim_\gamma g$.  Additionally, if $\A$ is not extensional, then by definition there are $f, g \in \A$ such that $f \not\sim_0 g$ but $f \sim_1 g$.  Thus if $\A$ is not extensional and ${\sim_\alpha} \neq {\sim_\gamma}$ for some $\gamma > \alpha \geq 0$, then the inequality is witnessed by members of $\A$.

We therefore have the following equivalent characterizations of $\alpha = \ord(\A)$.
\begin{itemize}
\item $\alpha$ is least such that ${\sim_\alpha} = {\sim_{\alpha + 1}}$.

\item $\alpha$ is least such that ${\sim_\gamma} = {\sim_\alpha}$ for all $\gamma \geq \alpha$.

\item $\alpha$ is least such that ${\sim_\alpha} = {\approx}$.
\end{itemize}
Furthermore, if $\A$ is not extensional, then the above items also characterize $\ord(\A)$ when the relations are all restricted to $\A$.  In particular, if $\A$ is not extensional, then it makes no difference whether we define $\ord(A)$ by considering the $\sim_\alpha$'s as relations on closed terms or as relations on elements of $\A$.  If $\A$ is extensional, then the situation is slightly more nuanced.  If $\A$ is extensional and total, then every closed term is $\sim_0$-equivalent to some member of $\A$, so ${\sim_0} = {\sim_1}$ as relations on closed terms, and therefore $\ord(\A) = 0$.  However, if $\A$ is extensional but has non-total elements, then there is a (unique) $f \in \A$ such that $f x \ua$ for all $x \in \A$.  In this case we have, for example, that $K f \not\sim_1 f$ but $K f \sim_2 f$.  Therefore ${\sim_1} \neq {\sim_2}$, even when restricted to $\A$. 
Thus $\ord(\A) \geq \omega$ by Theorem~\ref{thm:fix} because $\ord(A)$ must be a limit ordinal.  None of the pca's we consider are extensional.  However, there are interesting examples of extensional pca's with non-total elements~\cite{Bethke}.

\section{Constructive ordinals} \label{sec:ordinal}

In this section we collect some material on constructive ordinals 
and hyperarithmetical sets that we will use in the following. 
We use the notation from Sacks~\cite{Sacks}, to which we also 
refer for more details. 
A thorough historical account of hyperarithmetical sets can be 
found in Moschovakis~\cite{Moschovakis}.

Kleene~\cite{Kleene1938} introduced a notation system $\O\subseteq\omega$ 
for constructive ordinals, starting with a notation for the ordinal $0$ and closing under
successor and effective limits.\footnote{For a precise definition 
and intricacies of the coding see Moschovakis~\cite{Moschovakis}.}
For every ordinal notation $x\in\O$, 
$|x|$ denotes the corresponding constructive ordinal.
Given an $x \in \O$, the notation system allows us to effectively determine whether $|x|$ is a successor ordinal or a limit ordinal, whether $|x|$ is an even ordinal or an odd ordinal, a notation for $|x|$'s successor, and, if $|x|$ is a successor, a notation for $|x|$'s predecessor.
The constructive ordinals form an initial segment of the set of countable ordinals, 
and their supremum is called $\OCK$.
The set $\O$ is equipped with a partial order $<_\O$ such that 
$a <_\O b$ implies $|a| < |b|$.
The main tool here is effective transfinite recursion,
which allows one to define computable objects by recursion along $<_\O$, 
even though the relation $a <_\O b$ itself is not computable. 

The class of constructive ordinals can also be characterized using 
computable well-orders. An ordinal is called computable if it is finite 
or the order type of a computable well-order on $\omega$. 
Markwald and Spector independently proved that the 
computable ordinals equal the constructive ordinals 
(cf.\ \cite{Moschovakis}*{Theorem 2A.1}).
We often blur the distinctions between a constructive ordinal and a computable ordinal and between a constructive ordinal and its notation.

Using recursion on constructive ordinals, the arithmetical 
hierarchy can be extended into the transfinite, yielding 
the hyperarithmetical hierarchy.
The class of hyperarithmetical sets is the smallest class 
containing the computable sets that is closed under 
complementation and effective unions.\footnote{
Moschovakis~\cite{Moschovakis} points out that this is 
probably due to Shoenfield.}
Finally, Kleene proved that a set is hyperarithmetical if and only if it 
is $\Delta^1_1$. (Since this is an effective version of the classical 
result that the $\mathbf{\Delta}^1_1$ sets are exactly the Borel sets, this is 
sometimes called the Kleene-Suslin theorem.)

The hyperarithmetical hierarchy is stratified by the computable infinitary formulas.  Essentially, a $\Sigma^0_0$ / $\Pi^0_0$ formula is coded as an index for a machine computing the characteristic function of a relation; a $\Sigma^0_\alpha$ formula is coded as a c.e.\ disjunction of $\Pi^0_\beta$ formulas for $\beta < \alpha$; and a $\Pi^0_\alpha$ formula is coded as a c.e.\ conjunction of $\Sigma^0_\beta$ formulas for $\beta < \alpha$.  The following definition is presented as in~\cite{HirschfeldtWhite}.  See~\cite{AshKnight}*{Chapter~7} for a more detailed treatment of computable infinitary formulas.

\begin{definition}\
\begin{itemize}
\item A $\Sigma^0_0$ ($\Pi^0_0$) index for a computable formula $\varphi(\bar n)$ is a triple $\la \Sigma, 0, e \ra$ ($\la \Pi, 0, e \ra$), where $e$ is the index of a total computable function $\Phi_e(\bar n)$ computing the characteristic function of the relation defined by $\varphi$.

\item For a computable ordinal $\alpha$, a $\Sigma^0_\alpha$ index for a computable formula $\varphi(\bar n)$ is a triple $\la \Sigma, a, e \ra$, where $a$ is a notation for $\alpha$ and $e$ is an index for a c.e.\ set of $\Pi^0_{\beta_k}$ indices for computable formulas $\psi_k(\bar n, \bar x)$, where $\beta_k < \alpha$ for each $k$ and
\begin{align*}
\varphi(\bar n) \equiv \bigvee_{k \in \omega} \exists \bar{x} \; \psi_k(\bar n, \bar x).
\end{align*}

\item For a computable ordinal $\alpha$, a $\Pi^0_\alpha$ index for a computable formula $\varphi(\bar n)$ is a triple $\la \Pi, a, e \ra$, where $a$ is a notation for $\alpha$ and $e$ is an index for a c.e.\ set of $\Sigma^0_{\beta_k}$ indices for computable formulas $\psi_k(\bar n, \bar x)$, where $\beta_k < \alpha$ for each $k$ and
\begin{align*}
\varphi(\bar n) \equiv \bigwedge_{k \in \omega} \forall \bar{x} \; \psi_k(\bar n, \bar x).
\end{align*}

\item For a computable ordinal $\alpha$, a set $X \subseteq \omega^n$ is said to be $\Sigma^0_\alpha$ ($\Pi^0_\alpha$) if there is a $\Sigma^0_\alpha$ ($\Pi^0_\alpha$) formula defining $X$.
\end{itemize}
\end{definition}

At the arithmetical levels, it makes no difference whether one defines the $\Sigma^0_{n+1}$ / $\Pi^0_{n+1}$ sets in terms of ordinary finitary formulas or computable infinitary formulas.  For each $n \geq 1$, every infinitary $\Sigma^0_n$ ($\Pi^0_n$) formula is equivalent to a finitary $\Sigma^0_n$ ($\Pi^0_n$) formula.  For $n=0$, the equivalence depends on one's precise definition of the finitary $\Sigma^0_0$ / $\Pi^0_0$ formulas.

Spector~\cite{Spector}
proved that every hyperarithmetical well-order is isomorphic to a 
computable one.
This result does not hold for linear orders in general.  See 
Ash and Knight \cite{AshKnight} for an overview of results by 
Feiner, Lerman, Jockusch and Soare, Downey, and Seetapun, 
culminating in the following result of 
Knight~\cite{AshKnight}*{Theorem 9.15}:
Every nonzero Turing degree contains a linear order. 
However, Spector's result was salvaged for linear orders 
by Montalb\'an~\cite{Montalban}, 
who showed that every hyperarithmetical linear order is 
\emph{equimorphic} to a computable one, where two linear orders 
are equimorphic if they can be embedded into each other.

We will make use of the following result from Spector.
(It occurs somewhat hidden on p162 of \cite{Spector}. 
See also~\cite{Sacks}*{Corollary~I.5.6}.) 

\begin{theorem} \label{bounding}
{\rm ($\Sigma^1_1$-bounding, Spector \cite{Spector})}
Suppose that $X\subseteq \O$ is $\Sigma^1_1$. 
Then there exists $b\in\O$ such that 
$|x| \leq |b|$ for every $x\in X$. 
\end{theorem}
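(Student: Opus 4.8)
The plan is to prove this by contradiction, using the fact that $<_\O$ and the predicate $x \in \O$ have the right complexity, together with the $\Sigma^1_1$ separation/uniformization machinery available for Kleene's $\O$. Recall the basic complexity facts we may assume from the discussion of constructive ordinals: $\O$ is $\Pi^1_1$, and for $a \in \O$ the relation $b <_\O a$ is (uniformly) hyperarithmetical, hence in particular $\Delta^1_1$; more to the point, $\{b : b <_\O a\}$ is, uniformly in $a \in \O$, a $\Pi^1_1$ (indeed $\Delta^1_1$) set, and membership in $\O$ below a fixed notation $a$ can be verified hyperarithmetically. The key external input is the Gandy basis / boundedness phenomenon: a $\Sigma^1_1$ subset of $\O$ cannot be cofinal in $\O$, because $\O$ itself is $\Pi^1_1$-complete while any $\Sigma^1_1$ set cofinal in $\O$ would let us compute a $\Pi^1_1$-complete set from a $\Sigma^1_1$ one.

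First I would make the contradiction hypothesis precise: suppose $X \subseteq \O$ is $\Sigma^1_1$ and for every $b \in \O$ there is some $x \in X$ with $|b| < |x|$; equivalently, $\sup\{|x| : x \in X\} = \OCK$. Next I would show that under this hypothesis one can define a $\Pi^1_1$-complete set by a $\Sigma^1_1$ condition, which is the desired contradiction. The natural candidate is $\O$ itself (or an index set $\Pi^1_1$-complete via $\O$): for an arbitrary $n$, one has $n \in \O$ if and only if $n$ is a notation and $|n| < \OCK$, and the latter, given cofinality of $X$, becomes ``$\exists x \in X$ with $n <_\O x$ after possibly passing to a common notation,'' or more carefully, ``$\exists x \in X\, \exists$ a hyperarithmetical well-order witnessing $n <_\O x$.'' The real work is to package this so that the matrix is genuinely $\Sigma^1_1$: one existentially quantifies over $x$ (a number, so harmless), over a real coding an order-isomorphism or a descent witness, and then asserts an arithmetical (or hyperarithmetical, hence $\Sigma^1_1$) property; the $\Sigma^1_1$-ness of $X$ is used to replace ``$x \in X$'' by its $\Sigma^1_1$ definition, and $\Sigma^1_1 \wedge \Sigma^1_1$ is $\Sigma^1_1$.

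The main obstacle — and the step I would spend the most care on — is verifying that ``$n <_\O x$'' can be expressed $\Sigma^1_1$ relative to knowing $x \in \O$, since $<_\O$ in full generality is $\Pi^1_1$. The trick is that we do not need the full relation: we only need that \emph{for each fixed $x \in X$} the set $\{n : n \leq_\O x\}$ is uniformly $\Delta^1_1$, and that being $\Delta^1_1$ it has a $\Sigma^1_1$ definition; combining the $\Sigma^1_1$ existential over $x \in X$ with this gives a $\Sigma^1_1$ definition of $\{n : \exists x \in X\ (n \leq_\O x)\}$, which by cofinality equals $\O$. That contradicts the $\Pi^1_1$-completeness of $\O$ (if this set were $\Sigma^1_1$ then $\O$ would be $\Delta^1_1$, hence hyperarithmetical, which is false). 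The alternative, cleaner route I would actually write up is to invoke the Gandy–Spector theorem directly: a $\Sigma^1_1$ set that is a subset of $\O$ is contained in $\{x \in \O : |x| < |b|\}$ for some $b \in \O$, essentially because its ``$\O$-rank'' is a $\Sigma^1_1$ ordinal and every $\Sigma^1_1$ ordinal (indeed every ordinal that is the rank of a $\Sigma^1_1$ set of notations) is below $\OCK$; then $b$ is the required bound. I would cite \cite{Sacks}*{Corollary~I.5.6} for the precise form and fill in the reduction to the $\Pi^1_1$-completeness of $\O$ as above.
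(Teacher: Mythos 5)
The paper does not prove this statement; it is imported directly from Spector via the citation to \cite{Sacks}*{Corollary~I.5.6}, so there is no paper proof to compare against. Your broad strategy --- derive a contradiction by showing that a cofinal $\Sigma^1_1$ subset of $\O$ would make $\O$ itself $\Sigma^1_1$, contradicting $\Pi^1_1$-completeness --- is the standard and correct one. However, the specific $\Sigma^1_1$ set you exhibit does not do the job.

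You claim that $\{n : \exists x\in X\,(n\leq_\O x)\}$ equals $\O$ when $X$ is cofinal. This is false. Kleene's $<_\O$ is a \emph{partial} order that branches heavily at limit notations: there are many pairwise $<_\O$-incomparable notations for the same ordinal (for instance, distinct notations for $\omega$ coming from different fundamental sequences). For a fixed $x\in\O$, the set $\{n : n\leq_\O x\}$ is a single linear branch of order type $|x|+1$ through the notation tree, not the set of \emph{all} $n\in\O$ with $|n|\leq|x|$. So even if $X$ is cofinal, $\{n : \exists x\in X\,(n\leq_\O x)\}$ is a $\Sigma^1_1$ subset of $\O$ containing notations of unboundedly large rank, but it omits most of $\O$, and you do not obtain a $\Sigma^1_1$ definition of $\O$. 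The repair requires the stronger uniformity fact due to Spector: there is a single $\Sigma^1_1$ relation $H(n,x)$ which, for every $x\in\O$, holds if and only if $n\in\O$ and $|n|\leq|x|$ --- equivalently, the set of \emph{all} notations of rank at most $|x|$ is uniformly $\Delta^1_1$ in $x\in\O$. Concretely, $H(n,x)$ may assert that there is an order-embedding of the linear order $\{m : m\leq_\O n\}$ into $\{m : m\leq_\O x\}$; an embedding into a well-order certifies $n\in\O$, and the existential over such embeddings is what you should quantify over, not a ``hyperarithmetical well-order witnessing $n<_\O x$.'' With $H$ in place of $n\leq_\O x$, the set $\{n : \exists x\,(x\in X\wedge H(n,x))\}$ is $\Sigma^1_1$ and equals $\O$ for cofinal $X$, giving the contradiction. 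Your closing remark that you would simply cite \cite{Sacks}*{Corollary~I.5.6} is exactly what the paper does.
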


\section{Ordinal analysis of $\K_1$} \label{sec:K1}

In this section we study the higher notions of extensionality from 
Definition~\ref{hierarchy} for Kleene's first model $\K_1$, i.e.\ the standard 
setting of computability theory. 

\begin{theorem} \label{thm:lower}
The relations $\sim_\alpha$ on $\K_1$ for $\alpha \leq \OCK$ are all different.  Thus $\ord(\K_1) \geq \OCK$.
\end{theorem}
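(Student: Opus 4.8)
The plan is to build, for each constructive ordinal, explicit ``witness pairs'' in $\K_1$ that are $\sim_{\alpha+1}$-equivalent but not $\sim_\alpha$-equivalent, and to push the construction through limits by effective transfinite recursion along $<_\O$. The base case is immediate: since $\K_1$ is not extensional, there are indices $d \neq e$ with $d \sim_1 e$, so $\sim_0 \neq \sim_1$. For the successor step, I would use Lemma~\ref{lem-AddK}: if $s \not\sim_\alpha t$ but $s \sim_{\alpha+1}\!/\!\sim_\alpha$ separates the two relations via the pair $(s,t)$, then $(Ks, Kt)$ separates $\sim_{\alpha+1}$ from $\sim_{\alpha+2}$, exactly as in the proof of Theorem~\ref{thm:omega}. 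The real content is the limit step, where there is no single ``next'' level to diagonalize against, and where I must produce a single index whose behaviour encodes the whole sequence of lower-level witnesses.

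The key construction is the following. Given $x \in \O$ with $|x| = \lambda$ a limit, the notation system supplies a computable sequence of notations $x_0 <_\O x_1 <_\O \cdots$ with $\sup_n |x_n| = \lambda$. By effective transfinite recursion I may assume I have already produced, uniformly in a notation $y <_\O x$, a pair of indices $(a_y, b_y)$ with $a_y \sim_{|y|+1} b_y$ but $a_y \not\sim_{|y|} b_y$; I want indices $a, b$ with $a \sim_{\lambda+1} b$ but $a \not\sim_\lambda b$. The idea is to have $a$ and $b$ be indices of functions that, on input $n$, return (indices for) the lower witnesses $a_{x_n}$ and $b_{x_n}$ respectively — suitably $K$-padded so that the levels line up, i.e.\ arrange that $a n$ and $b n$ compute indices that are $\sim_{\lambda}$-equivalent but not $\sim_{|x_n|+1}$-equivalent, which forces $a \not\sim_{\lambda} b$ (witnessed at input $n$ for each $n$) while still giving $a \sim_{\lambda+1} b$. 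Here the padding lemma (Lemma~\ref{lem-AddK}) is used to raise a separating pair at level $|x_n|$ up to a separating pair at level $\lambda$, and the S-m-n theorem gives the indices $a,b$ uniformly from the recursion data. One then needs that $a n \not\sim_{|x_n|} b n$ for infinitely many $n$, which is enough because the $|x_n|$ are cofinal in $\lambda$: if $a \sim_\lambda b$ then $a \sim_\beta b$ for some $\beta < \lambda$, hence $an \sim_\beta bn$ for all $n$, contradicting the separation at any $n$ with $|x_n| \geq \beta$.

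Running this recursion through all of $\O$ yields, for every constructive ordinal $\alpha < \OCK$, a pair in $\K_1$ separating $\sim_\alpha$ from $\sim_{\alpha+1}$, so all the $\sim_\alpha$ for $\alpha < \OCK$ are distinct. It remains to separate $\sim_{\OCK}$ itself, i.e.\ to see that $\sim_{\OCK} \neq \sim_\beta$ for every $\beta < \OCK$. But $\OCK$ is a limit, so $s \sim_{\OCK} t$ iff $s \sim_\gamma t$ for some $\gamma < \OCK$; since every $\gamma < \OCK$ is below some constructive $\alpha < \OCK$ and $\sim_\gamma \subsetneq \sim_{\alpha+1} \subseteq \sim_{\OCK}$, no fixed $\beta < \OCK$ can give $\sim_\beta = \sim_{\OCK}$, by exactly the argument used for $\omega$ in Theorem~\ref{thm:omega}. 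Hence all the $\sim_\alpha$ for $\alpha \leq \OCK$ are different, and in particular $\ord(\K_1) \geq \OCK$.

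The main obstacle I anticipate is the bookkeeping in the limit step: ensuring the level arithmetic is exactly right (so that the $K$-paddings lift each lower witness from level $|x_n|$ precisely to level $\lambda$, using that $\lambda$ is a limit and so $|x_n|+k < \lambda$ for all $k$), and making the whole assignment $y \mapsto (a_y,b_y)$ genuinely uniform so that effective transfinite recursion applies. Everything else — the base case, the successor case, and the final separation of $\sim_{\OCK}$ — is a routine adaptation of Theorem~\ref{thm:omega} together with Lemma~\ref{lem-AddK}.
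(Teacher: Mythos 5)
Your proposal is correct and follows essentially the same route as the paper's: effective transfinite recursion along $\O$, with the non-extensionality of $\K_1$ for the base case, Lemma~\ref{lem-AddK} for successors, and, at a limit $\lambda = \sup_n |x_n|$, indices $a,b$ that on input $n$ return the inductively-obtained witnesses at level $|x_n|$. One small simplification worth noting: no $K$-padding is needed at the limit step, because $a_{x_n} \sim_{|x_n|+1} b_{x_n}$ already gives $a_{x_n} \sim_\lambda b_{x_n}$ by monotonicity (Theorem~\ref{thm:basic}), while $a_{x_n} \not\sim_{|x_n|} b_{x_n}$ together with the cofinality of $(|x_n|)_n$ in $\lambda$ is already enough to rule out $a \sim_\lambda b$; the paper simply sets $a \cdot n = a_{x_n}$ and $b \cdot n = b_{x_n}$ directly.
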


\begin{proof}
By effective transfinite recursion, for every $\alpha < \OCK$ we produce $f_\alpha, g_\alpha \in \K_1$ such that $f \not\sim_\alpha g$ and $f \sim_{\alpha + 1} g$.  Therefore ${\sim_\alpha} \neq {\sim_{\alpha + 1}}$ for all $\alpha < \OCK$.

The pca $\K_1$ is not extensional, so for the base case $\alpha = 0$ we may choose $f_0, g_0 \in \K_1$ such that $f_0 \not\sim_0 g_0$ and $f_0 \sim_1 g_0$.

The successor case is similar to the proof of Theorem~\ref{thm:omega}.  Suppose that $\alpha = \beta + 1$ is a successor.  By effective transfinite recursion, compute $f_\beta, g_\beta \in \K_1$ such that $f_\beta \not\sim_\beta g_\beta$ and $f_\beta \sim_\alpha g_\beta$.  Let $f_\alpha = K f_\beta$ and $g_\alpha = K g_\beta$.  Then $f_\alpha \not\sim_\alpha g_\alpha$ and $f_\alpha \sim_{\alpha + 1} g_\alpha$ by Lemma~\ref{lem-AddK}.

Suppose that $\alpha$ is a limit, and let $\beta_0 < \beta_1 < \cdots$ be a computable sequence of ordinals converging to $\alpha$.  By effective transfinite recursion, we can uniformly compute indices $f_{\beta_n}$ and $g_{\beta_n}$ such that 
$f_{\beta_n} \not\sim_{\beta_n} g_{\beta_n}$ and $f_{\beta_n} \sim_{\beta_n + 1} g_{\beta_n}$ for all $n \in \omega$.  Thus we can compute $f_\alpha$ and $g_\alpha$ so that $f_\alpha \cdot n = f_{\beta_n}$ and $g_\alpha \cdot n = g_{\beta_n}$ for all $n$.  To see that $f_\alpha \not\sim_\alpha g_\alpha$, consider a $\beta < \alpha$, and let $n$ be such that $\beta < \beta_n < \alpha$.  Then 
$f_\alpha \cdot n = f_{\beta_n} \not\sim_{\beta_n} g_{\beta_n} = g_\alpha \cdot n$.  Thus there is an $n$ such that $f_\alpha \cdot n \not\sim_{\beta_n} g_\alpha \cdot n$.  Therefore $f_\alpha \not\sim_{\beta_n + 1} g_\alpha$, so also $f_\alpha \not\sim_\beta g_\alpha$.  Thus $f_\alpha \not\sim_\beta g_\alpha$ for every $\beta < \alpha$, so $f_\alpha \not\sim_\alpha g_\alpha$.  On the other hand, for every $n$, $f_\alpha \cdot n = f_{\beta_n} \sim_{\beta_n + 1} g_{\beta_n} = g_\alpha \cdot n$, and therefore $f_\alpha \cdot n \sim_\alpha g_\alpha \cdot n$.  Thus $f_\alpha \sim_{\alpha + 1} g_\alpha$.  This completes the proof.
\end{proof}

By Theorem~\ref{thm:lower}, we have $\ord(\K_1) \geq \OCK$.
Below we prove that $\ord(\K_1) \leq \OCK$, so $\ord(\K_1) = \OCK$.

For some pairs $f$ and $g$ in $\K_1$ we have that
$f \not\sim_\alpha g$ for \emph{all} ordinals $\alpha$, for example
when $f x \ua$ for all $x \in \K_1$ and $g x_0 \cdots x_{n-1}\da$ for all sequences $x_0, x_1, \dots, x_{n-1}$ of elements of $\K_1$.
However, there are also less trivial examples of such pairs 
$f$ and $g$. 

\begin{definition}
We call an element $f$ of a pca $\A$ \emph{hereditarily total} 
if $f x_0 \cdots x_{n-1} \da$ for all $x_0, \dots, x_{n-1} \in \A$.
\end{definition}

Note that in a total pca all elements are hereditarily total.
However, in a non-total pca the combinator $K$ is total 
(since $Ka=a$ for every $a$), but not hereditarily total 
(take $a$ non-total). 

\begin{proposition} 
There is a pair of hereditarily total $f$ and $g$ 
in $\K_1$ such that $f\not\sim_\alpha g$ for all $\alpha$.
\end{proposition}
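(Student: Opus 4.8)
The plan is to construct $f$ and $g$ in $\K_1$ that are hereditarily total but are ``deeply'' non-extensional in a way that persists through every level of the hierarchy. The key idea is to push the phenomenon that drives Theorem~\ref{thm:omega} and Theorem~\ref{thm:lower} past $\OCK$: whereas those constructions use effective transfinite recursion and are therefore bounded by constructive ordinals, here we want a single pair whose ``non-equivalence rank'' is not any ordinal at all. A natural approach is a self-referential construction via the recursion theorem. I would look for indices $f$ and $g$ such that, on input $n$, $f \cdot n$ and $g \cdot n$ again compute (indices closely related to) $f$ and $g$ themselves, so that any putative witness $\beta$ with $f \sim_\beta g$ would have to satisfy $\beta > \beta$ after unfolding one application, which is impossible.

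Concretely, I would use the recursion theorem to obtain an index $f$ with $f \cdot n = f$ for all $n$ (a fixed point of the ``constant $f$'' operation), and similarly $g \cdot n = g$ for all $n$, but with $f \neq g$ chosen so that moreover $f \cdot n = g \cdot n$ fails — wait, that cannot happen if each returns itself. So instead I would arrange $f \cdot n \simeq g$ for all $n$ and $g \cdot n \simeq f$ for all $n$, or more simply have $f$ and $g$ each return a common third index $h$ with $h \cdot n$ cycling back. The cleanest version: pick (via the recursion theorem applied simultaneously, or via a padding argument) distinct indices $f \neq g$ with $f \cdot n = f$ and $g \cdot n = g$ for every $n$. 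These are manifestly hereditarily total, since every application is defined and returns a (fixed) index which is again total on all inputs. Then suppose toward a contradiction that $f \sim_\alpha g$ for some ordinal $\alpha$, and take $\alpha$ least. Since $f \neq g$ we have $\alpha \geq 1$, so $\alpha$ is a successor $\beta + 1$ or a limit; by Definition~\ref{hierarchy} in either case there is $\beta < \alpha$ with $f \cdot n \sim_\beta g \cdot n$ for some (in fact all) $n$ — in the limit case directly, in the successor case with $\beta = \alpha - 1$. But $f \cdot n = f$ and $g \cdot n = g$, so $f \sim_\beta g$ with $\beta < \alpha$, contradicting minimality of $\alpha$. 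Hence $f \not\sim_\alpha g$ for all $\alpha$.

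I would also double-check the construction of the distinct fixed points. One way: apply the double recursion theorem (Smullyan's) to the operations $(i,j) \mapsto \lambda n.\,i$ and $(i,j)\mapsto \lambda n.\,j$ to get indices $f,g$ with $\Phi_f(n)=f$ and $\Phi_g(n)=g$ for all $n$; then if it happens that $f = g$, perturb by a padding function to produce a genuinely different second index computing the same total function $\lambda n.\,f$, and note that an index $g$ for $\lambda n.\,f$ still satisfies $g \cdot n = f$, so I should instead phrase the argument with $f \cdot n$ and $g \cdot n$ both equal to a common index and let that common index be, say, $f$: take $f$ with $f\cdot n = f$, and take $g \neq f$ with $g \cdot n = f$ for all $n$ (such $g$ exists by padding $f$). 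Then $g$ is hereditarily total (it returns $f$, which is hereditarily total), $f$ is hereditarily total, $f \neq g$, and if $f \sim_\alpha g$ with $\alpha$ least then $\alpha \geq 1$, so some $\beta < \alpha$ has $f \cdot n \sim_\beta g\cdot n$, i.e.\ $f \sim_\beta f$ — which is \emph{true}, so this does not immediately contradict anything. So the symmetric version ($f\cdot n = f$, $g\cdot n = g$, $f \neq g$) is the one that works, and I will need the double recursion theorem to guarantee the two fixed points are distinct, or observe that among the infinitely many indices computing $\lambda n.\,f$ none need equal $f$ after padding — but padding changes the output. The safe route is the double recursion theorem, which directly yields distinct $f,g$ with $\Phi_f \equiv \lambda n.f$ and $\Phi_g \equiv \lambda n.g$.

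The main obstacle, then, is purely bookkeeping in the fixed-point construction: ensuring the two indices are genuinely distinct while each reproduces itself under application, and confirming hereditary totality (immediate, since every application yields a fixed total index). The conceptual heart — that self-reference forces the non-equivalence rank to be a non-well-founded ``ordinal,'' hence no ordinal at all — is short once the indices are in hand, and it reuses only Definition~\ref{hierarchy} and the observation about least witnesses. I would present the double recursion theorem application, verify hereditary totality in one line, and then give the two-line minimality argument for $f \not\sim_\alpha g$.
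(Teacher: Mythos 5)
Your proposal is correct and takes essentially the same approach as the paper: construct distinct $f, g \in \K_1$ with $f\cdot n = f$ and $g\cdot n = g$ for all $n$ (via the recursion theorem, using padding to ensure $f \neq g$), note hereditary totality, and deduce $f \not\sim_\alpha g$ for all $\alpha$ by a least-counterexample argument. One minor caveat: the double recursion theorem by itself does not guarantee $f \neq g$, so you would still need the padding argument you mentioned — which is exactly what the paper's proof appeals to.
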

\begin{proof}
We define $f \neq g$ so that for every $x$, 
$fx\da = f$ and $gx\da = g$. 
This clearly implies that $f\not\sim_\alpha g$ for all $\alpha$.
First define $f$ so that $fx\da = f$ for all $x$ using the 
recursion theorem. Second, we want to define $g\neq f$ so that 
$gx\da = g$ for every $x$. Inspection of the proof of the 
recursion theorem shows that the fixed point can be chosen to be 
different from any given number by using a padding argument. 
This guarantees the existence of $g$. 
\end{proof}

However, if $f, g \in \K_1$ satisfy $f \sim_\alpha g$ for \emph{some} $\alpha$, 
then there is such an $\alpha < \OCK$.

\begin{theorem} \label{thm:upper}
Suppose that $s$ and $t$ are closed terms over $\K_1$ and that $s \sim_\alpha t$ for some ordinal~$\alpha$. 
Then there is an $\alpha < \OCK$ such that $s \sim_\alpha t$.  That is, $\ord(\K_1) \leq \OCK$.
\end{theorem}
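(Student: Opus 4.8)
It will be convenient to restate the goal, using the definition of $\approx$, as: for all closed terms $s, t$ over $\K_1$, if $s \approx t$ then $s \sim_\alpha t$ for some $\alpha < \OCK$. This in turn yields $\ord(\K_1) \leq \OCK$, since then $\sim_{\OCK} = \bigcup_{\beta < \OCK} \sim_\beta = {\approx} = \sim_{\OCK + 1}$; combined with Theorem~\ref{thm:lower} it gives $\ord(\K_1) = \OCK$. The idea is to encode the ``extensionality derivation'' of a pair $(s,t)$ as a tree whose well-foundedness is equivalent to $s \approx t$ and whose rank bounds how far up the hierarchy $\sim_\alpha$ one has to climb, and then to observe that this tree is arithmetical, so that when it is well-founded its rank is a computable ordinal and hence below $\OCK$.

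Concretely, I would attach to each pair of closed terms $s, t$ the tree $T_{s,t} \subseteq \omega^{<\omega}$ whose nodes are the finite sequences $(x_0, \dots, x_{k-1})$ such that $s\, x_0 \cdots x_{i-1} \nsimeq t\, x_0 \cdots x_{i-1}$ for every $i \leq k-1$; thus the empty sequence is always a node, the one-element sequence $(x)$ is a node exactly when $s \nsimeq t$, and so on. The crucial structural feature is self-similarity: when $s \nsimeq t$, every $(x)$ is a node of $T_{s,t}$, and the subtree of $T_{s,t}$ above $(x)$ is isomorphic to $T_{sx, tx}$. I would then prove two lemmas, both by transfinite induction and both stated for all closed terms at once. \emph{First:} if $T_{s,t}$ is ill-founded then $s \not\sim_\alpha t$ for every ordinal $\alpha$; from an infinite branch $x_0, x_1, \dots$ one reads off $s\, x_0 \cdots x_{i-1} \nsimeq t\, x_0 \cdots x_{i-1}$ for all $i$, and an induction on $\alpha$ finishes it --- the base case is just $s \nsimeq t$, the successor case $\alpha = \gamma + 1$ uses that $x_1, x_2, \dots$ is an infinite branch of $T_{sx_0, tx_0}$ together with the induction hypothesis applied to $(sx_0, tx_0)$, and the limit case is immediate. \emph{Second:} if $T_{s,t}$ is well-founded with rank $\rho$, then $s \sim_{\rho + 1} t$; here one inducts on $\rho$, noting that when $\rho > 0$ we have $s \nsimeq t$, each subtree above $(x)$ equals $T_{sx, tx}$ and is well-founded of some rank $\rho_x$ with $\rho_x + 1 \leq \rho$, so $sx \sim_{\rho_x + 1} tx$ by the induction hypothesis and hence $sx \sim_\rho tx$ by Theorem~\ref{thm:basic}, which for all $x$ gives $s \sim_{\rho + 1} t$.

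To conclude, suppose $s \approx t$, i.e., $s \sim_\alpha t$ for some ordinal $\alpha$. By the first lemma, $T_{s,t}$ is well-founded. Now $T_{s,t}$ is an arithmetical (indeed $\Delta^0_2$) subset of $\omega^{<\omega}$, because $u \nsimeq v$ is an arithmetical relation of codes of closed terms over $\K_1$ and the terms $s\, x_0 \cdots x_{i-1}$ and $t\, x_0 \cdots x_{i-1}$ are computed uniformly from $s, t, x_0, \dots, x_{i-1}$. A well-founded arithmetical tree has rank below $\OCK$: the Kleene--Brouwer ordering of $T_{s,t}$ is an arithmetical, hence hyperarithmetical, well-order, so by Spector's theorem it is isomorphic to a computable well-order and therefore has order type $< \OCK$, while the rank $\rho$ of $T_{s,t}$ is at most this order type. (Alternatively this follows from $\Sigma^1_1$-bounding, Theorem~\ref{bounding}.) By the second lemma $s \sim_{\rho + 1} t$, and $\rho + 1 < \OCK$ because $\OCK$ is a limit ordinal. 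Hence $s \sim_\alpha t$ for some $\alpha < \OCK$, as required.

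I expect the main obstacle to be the ordinal bookkeeping in the second lemma. The ``term rank'' of a pair --- the least $\alpha$ with $s \sim_\alpha t$ --- obeys the recursion $(\sup_x \mathrm{rk}(sx, tx)) + 1$ when $s \nsimeq t$, whereas the rank of $T_{s,t}$ obeys $\sup_x(\mathrm{rk}(T_{sx, tx}) + 1)$; these two ordinals can differ at limit stages, so one should aim for the clean inequality ``$T_{s,t}$ well-founded of rank $\rho$ implies $s \sim_{\rho+1} t$'' rather than an exact equality between the two ranks. The remaining ingredient --- that arithmetical (even hyperarithmetical) well-founded trees have rank below $\OCK$ --- is standard, though it is worth being explicit about which boundedness principle one invokes.
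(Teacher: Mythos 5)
Your proof is correct, but it takes a genuinely different route from the proof the paper gives for Theorem~\ref{thm:upper}. The paper's designated argument phrases $\sim_\alpha$ as the stages $\Gamma_\alpha$ of a monotone $\Pi^0_1$ operator $\Gamma$ on $\PP(\Omega\times\Omega)$, cites the general fact that such operators close off by $\OCK$, and then gives a hands-on version: assume $s \sim_{\OCK+1} t$, use $\Pi^1_1$-uniformization to obtain a total $\Pi^1_1$ (hence $\Sigma^1_1$) function $p$ selecting notations with $(sx,tx) \in \Gamma_{|p(x)|}$, and apply $\Sigma^1_1$-bounding to get a computable bound. Your argument instead attaches to each pair $(s,t)$ an arithmetical tree whose well-foundedness is equivalent to $s \approx t$, bounds the relevant $\sim_\alpha$-level by the rank of the tree, and then uses the fact (via Spector's theorem or $\Sigma^1_1$-bounding) that a hyperarithmetical well-founded tree has rank below $\OCK$. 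This is more elementary in that it avoids $\Pi^1_1$-uniformization and the machinery of inductive definability, at the cost of a bit of ordinal bookkeeping. Interestingly, the paper itself gives essentially your argument as a \emph{second} proof in the remarks following Lemma~\ref{lem-EquivChar} and Theorem~\ref{thm-EquivComplexity}: there the tree $T = \{\sigma : s\sigma \nsimeq t\sigma\}$ is used, the bound is taken from the $\ltKB$-order type of $T$ rather than its rank, and the computation $\alpha < \omega_1^{\emptyset'} = \OCK$ does the work that your appeal to Spector/$\Sigma^1_1$-bounding does. Your tree $T_{s,t}$ is shifted by one level relative to the paper's $T$ (it admits one extra generation of one-step extensions), but this is harmless, and you correctly settle for the inequality $s \sim_{\rho+1} t$ rather than an exact match between tree rank and ``term rank,'' which, as you note, need not coincide at limits. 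So: correct, and a valid and somewhat more elementary variant of the approach the paper sketches as an afterthought; the paper's main proof for this theorem is the inductive-definability one.
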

\begin{proof}
We prove that for closed terms $s$ and $t$,
\begin{align*}
s \sim_{\OCK+1} t \Longrightarrow s \sim_\OCK t.
\end{align*}

For the purpose of this proof, denote by $\Omega$ the set of 
closed terms over $\K_1$, coded as elements of $\omega$ in some effective way.
Now consider the operator 
$\Gamma \colon \PP(\Omega\times\Omega) \rightarrow \PP(\Omega\times\Omega)$
defined by 
\begin{align*}
\Gamma(X) = \bigset{(s,t) \in\Omega\times\Omega : 
\fa x \in \K_1 \; (sx,tx)\in X}.
\end{align*}
Define $\Gamma_\alpha$ for every ordinal $\alpha$ by 
\begin{align*}
\Gamma_0 &= \bigset{(s,t) : s \simeq t} \\
\Gamma_{\alpha+1} &= \Gamma(\Gamma_\alpha) \\ 
\Gamma_\gamma &= \bigcup_{\alpha < \gamma} \Gamma_\alpha \hspace{1cm} \text{for $\gamma$ a limit ordinal.}
\end{align*}
Note that $s \sim_\alpha t$ if and only if $(s,t) \in \Gamma_\alpha$
and that $\ord(\K_1)$ is the least ordinal $\alpha$ such that 
$\Gamma_{\alpha+1} = \Gamma_\alpha$.
Also note that $\Gamma_\alpha \subseteq \Gamma_{\alpha+1}$ by
Theorem~\ref{thm:basic}.

The complexity of operators such as $\Gamma$ is measured by the 
complexity of the predicate ``$n\in\Gamma(X)$.'' 
Note that $\Gamma$ is a $\Piz$ operator and that 
$\Gamma$ is monotone, i.e.,
$X\subseteq Y$ implies $\Gamma(X)\subseteq\Gamma(Y)$. 
It follows that the closure ordinal of $\Gamma$ is at most $\OCK$ 
because this holds for every $\Piz$ operator on $\PP(\omega)$
(Gandy \cite{Sacks}*{Corollary~III.8.3}), and also for every 
monotone $\Pi^1_1$ operator 
(Spector~\cite{Spector}, cf.\ \cite{Sacks}*{Corollary~III.8.6}).\footnote{As Moschovakis~\cite{Moschovakis} points out, this is not quite explicit in Spector~\cite{Spector}, but he agrees with Sacks's reference to it.}
Typically it is assumed that $\Gamma_0 = \emptyset$, but these closure properties also hold when $\Gamma_0$ is an arithmetical set as it is here.

Since the operator $\Gamma$ above is both $\Piz$ and monotone, 
the proof that $\OCK$ is an upper bound can be somewhat simplified 
as follows.  Suppose that $s \sim_{\OCK+1} t$.  We have to prove that 
$s \sim_\alpha t$ for some computable $\alpha$. 
Note that $s \sim_{\OCK+1} t$ if 
\begin{equation} \label{assumption}
\fa x\in\omega \; \ex \alpha<\OCK \; s x \sim_\alpha t x.
\end{equation} 
This in itself is not enough to conclude that the $\alpha$'s 
have a bound smaller than $\OCK$ because they could form a 
cofinal sequence in $\OCK$.  The question is:  How hard is it to find $\alpha$ for a given~$x$?

Observe that the $\Gamma_\alpha$ are uniformly 
hyperarithmetical for $\alpha <\OCK$.
Namely, by effective transfinite recursion we can define a 
computable function $f$ such that for every $a\in\O$, 
$f(a)$ is a $\Delta^1_1$ index of $\Gamma_{|a|}$
(cf.\ \cite{Sacks}*{Theorem~II.1.5}).
It follows that the statement 
\begin{equation} \label{ass2}
a\in\O \aaa (sx,tx) \in \Gamma_{|a|}
\end{equation}
is a $\Pi^1_1$ predicate of $x$ and $a$. 
Assuming \eqref{assumption}, we have that for every $x$ 
there is an $a$ such that \eqref{ass2} holds. 
Since $\Pi^1_1$ predicates can be uniformized by 
$\Pi^1_1$ predicates (Kreisel \cite{Sacks}*{Theorem~II.2.3}), there is 
a partial $\Pi^1_1$ function $p(x)$ such that 
\begin{equation} \label{ass3}
\fa x \big( p(x) \in\O \aaa (sx,tx) \in \Gamma_{|p(x)|} \big).
\end{equation}
Since $p$ is in fact total, it is also $\Sigma^1_1$ (cf.\ \cite{Sacks}*{Proposition~I.1.7}).
Therefore the set $\{ p(x) : x\in\omega\}$ is a $\Sigma^1_1$ subset of $\O$, 
and it follows from Theorem~\ref{bounding} that there is a computable 
ordinal $\alpha$ such that $|p(x)| < \alpha$ for every~$x$. 
It follows that $\fa x \; (sx,tx) \in \Gamma_\alpha$ and therefore that
$(s,t) \in \Gamma_{\alpha+1}$.  Thus $s \sim_{\alpha+1} t$.
\end{proof}

\begin{theorem} \label{thm:K1}
$\ord(\K_1) = \OCK$.
\end{theorem}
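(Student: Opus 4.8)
The plan is simply to combine the two bounds already established. Theorem~\ref{thm:lower} shows that the relations $\sim_\alpha$ on $\K_1$ are pairwise distinct for $\alpha \leq \OCK$, and in particular that ${\sim_\alpha} \neq {\sim_{\alpha+1}}$ for every $\alpha < \OCK$; by Definition~\ref{def-OrdA} this gives $\ord(\K_1) \geq \OCK$. Theorem~\ref{thm:upper} shows that $s \sim_{\OCK+1} t \Rightarrow s \sim_\OCK t$ for all closed terms $s,t$ over $\K_1$, which (together with the reverse implication from Theorem~\ref{thm:basic}) says that ${\sim_\OCK} = {\sim_{\OCK+1}}$, hence $\ord(\K_1) \leq \OCK$. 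Putting these together yields $\ord(\K_1) = \OCK$.

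The one small gap to address is that Theorem~\ref{thm:upper} as literally stated asserts the existence of \emph{some} computable $\alpha$ with $s \sim_\alpha t$ whenever $s \sim_\beta t$ for some ordinal $\beta$, whereas Definition~\ref{def-OrdA} is phrased in terms of equality of the relations $\sim_\alpha$ and $\sim_{\alpha+1}$. So I would briefly note that these are equivalent here: the conclusion of Theorem~\ref{thm:upper}, namely $s \sim_{\OCK+1} t \Rightarrow s \sim_\OCK t$ for all closed $s,t$, is exactly the statement ${\sim_{\OCK+1}} \subseteq {\sim_\OCK}$, and the reverse inclusion is immediate from Theorem~\ref{thm:basic}\ref{it-PresUp}. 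Hence ${\sim_\OCK} = {\sim_{\OCK+1}}$, so $\ord(\K_1) \leq \OCK$ by minimality in Definition~\ref{def-OrdA}.

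There is essentially no obstacle: all the work has been done in Theorems~\ref{thm:lower} and~\ref{thm:upper}, and this theorem is a one-line corollary recording their conjunction. The proof I would write is just:

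\begin{proof}
By Theorem~\ref{thm:lower}, the relations $\sim_\alpha$ on $\K_1$ are pairwise distinct for $\alpha \leq \OCK$, so in particular ${\sim_\alpha} \neq {\sim_{\alpha+1}}$ for every $\alpha < \OCK$; hence $\ord(\K_1) \geq \OCK$.  By Theorem~\ref{thm:upper}, $s \sim_{\OCK+1} t \Rightarrow s \sim_\OCK t$ for all closed terms $s$ and $t$ over $\K_1$, i.e.\ ${\sim_{\OCK+1}} \subseteq {\sim_\OCK}$.  Since ${\sim_\OCK} \subseteq {\sim_{\OCK+1}}$ by Theorem~\ref{thm:basic}\ref{it-PresUp}, we have ${\sim_\OCK} = {\sim_{\OCK+1}}$, and therefore $\ord(\K_1) \leq \OCK$.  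Thus $\ord(\K_1) = \OCK$.
\end{proof}
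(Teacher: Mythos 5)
Your proof is correct and takes exactly the same route as the paper, which also deduces the result immediately from Theorem~\ref{thm:lower} and Theorem~\ref{thm:upper}. The extra remarks you add about reconciling the phrasing of Theorem~\ref{thm:upper} with Definition~\ref{def-OrdA} are harmless and accurate.
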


\begin{proof}
This follows from Theorem~\ref{thm:lower} and Theorem~\ref{thm:upper}.
\end{proof}

Thus for $\K_1$, we have that $\approx$ and $\sim_\OCK$ are the same relation, that the relations $\sim_\alpha$ are different for all $\alpha \leq \OCK$, and that the relations $\sim_\alpha$ are equal to $\approx$ for all $\alpha \geq \OCK$.  The proof of Theorem~\ref{thm:upper} can also be used to show that the $\approx$ relation on $\K_1$ is $\Pi^1_1$.  This is because the $\approx$ relation is the same as the $\sim_\OCK$ relation, the $\sim_\OCK$ relation is the $\Gamma_\OCK$ from the proof of Theorem~\ref{thm:upper}, and $\Gamma_\OCK$ is $\Pi^1_1$ again by~\cite{Sacks}*{Corollary~III.8.3}.  We now show that the $\approx$ relation is $\Pi^1_1$-complete.

First we recall some terminology and notation for strings and trees and then introduce a bit of helpful notation.  For strings $\sigma, \tau \in \omega^{<\omega}$, $|\sigma|$ denotes the length of $\sigma$, $\sigma \sqsubseteq \tau$ means that $\sigma$ is an initial segment of $\tau$, $\sigma \sqsubset \tau$ means that $\sigma$ is a proper initial segment of $\tau$, and $\sigma^\smf\tau$ denotes the concatenation of $\sigma$ and $\tau$.  When $\tau = \la x \ra$ has length $1$, we write $\sigma^\smf x$ in place of $\sigma^\smf \la x \ra$.  For $\sigma \in \omega^{<\omega}$ and $n \leq |\sigma|$, $\sigma \restr n$ denotes the initial segment of $\sigma$ of length $n$.  We sometimes think of a function $f \in \omega^\omega$ as an infinite string, in which case $\sigma \sqsubseteq f$ means that the string $\sigma$ is an initial segment of $f$, and $f \restr n$ denotes the initial segment of $f$ of length $n$.  A tree is a set $T \subseteq \omega^{<\omega}$ that is closed under initial segments:  $\forall \sigma, \tau \in \omega^{<\omega} \, ((\tau \in T \andd \sigma \sqsubseteq \tau) \imp \sigma \in T)$.  A function $f \in \omega^\omega$ is a path through the tree $T \subseteq \omega^{<\omega}$ if every initial segment of $f$ is in $T$.

Think of a string $\sigma \in \omega^{<\omega}$ as denoting a sequence of elements of $\K_1$.  Then for a closed term $t$ over $\K_1$ and a $\sigma \in \omega^{<\omega}$, let $t \sigma$ denote
\begin{align*}
t \cdot \sigma(0) \cdot \sigma(1) \cdots \sigma(|\sigma|-1).
\end{align*}
In the case of the empty string $\epsilon$, $t \epsilon$ is $t$.  In this notation, for any closed terms $s$ and $t$ and any ordinal $\alpha$, we have that $s \sim_{\alpha + n} t$ if and only if $s \sigma \sim_\alpha t \sigma$ for every string $\sigma$ of length $n$.

\begin{lemma}\label{lem-EquivChar}
Let $s$ and $t$ be closed terms over $\K_1$.  Then $s \approx t$ if and only if for every sequence $a_0, a_1, a_2, \dots$ from $\K_1$, there is an $n$ such that $s a_0 a_1 \cdots a_n \simeq t a_0 a_1 \cdots a_n$.
\end{lemma}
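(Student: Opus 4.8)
The plan is to prove the two implications separately, using that, by definition, $s \approx t$ holds precisely when $s \sim_\alpha t$ for some ordinal~$\alpha$.

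For the forward direction I would prove, by transfinite induction on $\alpha$, the following statement for all closed terms $s$ and $t$ over $\K_1$: if $s \sim_\alpha t$, then every sequence $a_0, a_1, a_2, \dots$ from $\K_1$ has an $n$ with $s a_0 a_1 \cdots a_n \simeq t a_0 a_1 \cdots a_n$. For $\alpha = 0$ we have $s \simeq t$, hence $s a_0 \simeq t a_0$, so $n = 0$ works. For $\alpha = \beta + 1$, the hypothesis $s \sim_{\beta+1} t$ gives $s a_0 \sim_\beta t a_0$, and applying the induction hypothesis to the closed terms $s a_0$, $t a_0$ and the shifted sequence $a_1, a_2, \dots$ produces an $m$ with $s a_0 a_1 \cdots a_{m+1} \simeq t a_0 a_1 \cdots a_{m+1}$; so $n = m+1$ works. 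For $\alpha$ a limit, $s \sim_\alpha t$ means $s \sim_\beta t$ for some $\beta < \alpha$, and the induction hypothesis applies directly.

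For the reverse direction, assume the sequence condition and consider the \emph{disagreement tree}
\[
T = \bigset{ \sigma \in \omega^{<\omega} : s(\sigma \restr m) \not\simeq t(\sigma \restr m) \text{ for all } m \text{ with } 1 \leq m \leq |\sigma| }.
\]
This $T$ is closed under initial segments, and $\epsilon \in T$ vacuously. The sequence condition says exactly that $T$ has no infinite path, for an infinite path $f = (a_0, a_1, \dots)$ would give $s a_0 \cdots a_n \not\simeq t a_0 \cdots a_n$ for every $n$. Hence $T$ is well-founded, so it has a rank function $\rho$ satisfying $\rho(\sigma) = \sup\bigset{ \rho(\sigma^\smf x) + 1 : x \in \omega \text{ and } \sigma^\smf x \in T }$. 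I would then show, by induction on $\rho(\sigma)$, that $s\sigma \sim_{\rho(\sigma)+1} t\sigma$ for every $\sigma \in T$. Fix such a $\sigma$ and any $x \in \omega$. If $\sigma^\smf x \in T$, then $\rho(\sigma^\smf x) + 1 \leq \rho(\sigma)$, so the induction hypothesis together with monotonicity of the $\sim$-relations (Theorem~\ref{thm:basic}(ii)) yields $s\sigma x \sim_{\rho(\sigma)} t\sigma x$. If $\sigma^\smf x \notin T$, then $s\tau \simeq t\tau$ for some initial segment $\tau \sqsubseteq \sigma^\smf x$ with $|\tau| \geq 1$; since $\sigma \in T$ forbids this for every nonempty $\tau \sqsubseteq \sigma$, we must have $\tau = \sigma^\smf x$, whence $s\sigma x \simeq t\sigma x$ and again $s\sigma x \sim_{\rho(\sigma)} t\sigma x$. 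As this holds for every $x \in \K_1$, we conclude $s\sigma \sim_{\rho(\sigma)+1} t\sigma$. Taking $\sigma = \epsilon$ gives $s \sim_{\rho(\epsilon)+1} t$, hence $s \approx t$.

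I expect the forward direction to be entirely routine, the one thing to watch being the shift of the argument index in the successor step. The content is in the reverse direction, where the key observation is just that the hypothesis is literally the well-foundedness of $T$, whose rank then provides an ordinal $\alpha$ realizing $s \sim_\alpha t$; I do not anticipate a real obstacle there. In particular, there is no need to bound $\rho(\epsilon)$ below $\OCK$: the lemma only asks for $s \approx t$, so any ordinal witnessing $s \sim_\alpha t$ is enough (though, since $\ord(\K_1) = \OCK$, such a witness below $\OCK$ must exist).
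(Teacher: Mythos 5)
Your proof is correct, and its core idea --- translating $s \approx t$ to the well-foundedness of the disagreement tree $T$ --- is exactly the one in the paper.  (Note that your $T$ and the paper's $T = \{\sigma : s\sigma \nsimeq t\sigma\}$ coincide on nonempty strings, since $s\sigma \simeq t\sigma$ automatically propagates to all extensions; so the tree itself is the same object, modulo whether $\epsilon$ is forced in.)  Where you diverge is only in how the ordinal is extracted and how the forward direction is argued.  For the reverse direction you use the canonical rank function of the well-founded tree, whereas the paper passes through the Kleene--Brouwer ordering and its order type; these are interchangeable here, though the KB ordering is what the paper leans on afterward to note that the witnessing ordinal lies below $\omega_1^{\emptyset'} = \OCK$, giving a second proof of $\ord(\K_1) = \OCK$.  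Your rank-function version does not immediately give that bonus observation, but for the lemma itself it is just as clean.  For the forward direction you proceed by transfinite induction on $\alpha$, whereas the paper fixes a branch and constructs a weakly decreasing sequence of ordinals $\alpha_0 \geq \alpha_1 \geq \cdots$ that must eventually reach $0$; again these are two standard implementations of the same fact, and both are correct.  Your successor step's reindexing ($n = m+1$) is the one thing requiring care, and you handle it correctly.
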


\begin{proof}
Define a tree $T \subseteq \omega^{<\omega}$ by 
$T = \{\sigma \in \omega^{<\omega} : s \sigma \nsimeq t \sigma\}$.  
We show that $s \approx t$ if and only if $T$ is well-founded.  

Suppose that $s \approx t$.  Observe that for any closed terms $p$ and $q$, if $p \sim_\alpha q$ for some $\alpha > 0$, then for every $x \in \K_1$ there is a $\beta < \alpha$ such that $p x \sim_\beta q x$.  Now consider any $f \colon \omega \imp \omega$.  Define a sequence of ordinals $\alpha_0 \geq \alpha_1 \geq \cdots$, where $s f(0) \cdots f(n-1) \sim_{\alpha_n} t f(0) \cdots f(n-1)$ for each $n$, as follows.  First, let $\alpha_0$ be such that $s \sim_{\alpha_0} t$.  Then for each $n$, if $\alpha_n > 0$, let $\alpha_{n+1} < \alpha_n$ be such that $s f(0) \cdots f(n-1)f(n) \sim_{\alpha_{n+1}} t f(0) \cdots f(n-1)f(n)$.  If $\alpha_n = 0$, then let $\alpha_{n+1} = 0$.  The sequence $\alpha_0 \geq \alpha_1 \geq \cdots$ cannot be a strictly descending sequence of ordinals, so there must be an $n$ such that $\alpha_n = 0$.  Then for this $n$, $s f(0) \cdots f(n-1) \simeq t f(0) \cdots f(n-1)$, which means that $f$ is not a path through $T$.  Thus $T$ is well-founded.

For the converse, suppose that $T$ is well-founded.  Recall the \emph{Kleene--Brouwer ordering} of $\omega^{<\omega}$, where $\tau \ltKB \sigma$ if either $\tau$ is a proper extension of $\sigma$ or $\tau$ is to the left of $\sigma$.  That is, $\tau \ltKB \sigma$ if and only if
\begin{align*}
\tau \sqsupset \sigma \orr (\exists n < \min(|\sigma|, |\tau|))[\tau(n) < \sigma(n) \andd (\forall i < n)(\sigma(i) = \tau(i))].
\end{align*}
The tree $T$ is well-ordered by $\ltKB$ because we assume that $T$ is well-founded, and a subtree of $\omega^{<\omega}$ is well-founded if and only if it is well-ordered by $\ltKB$.  Let $\alpha$ be the ordinal isomorphic to $(T, \ltKB)$, and, for each $\sigma \in T$, let $\alpha_\sigma$ be the ordinal corresponding to $\sigma$.  We show by transfinite induction that $s \sigma \sim_{\alpha_\sigma + 1} t \sigma$ for every $\sigma \in T$.

Let $\sigma \in T$ and suppose inductively that $s \tau \sim_{\alpha_\tau + 1} t \tau$ for all $\tau \in T$ with $\tau \ltKB \sigma$.  Consider any $x$.  If $\sigma^\smf x \notin T$, then $s \cdot \sigma^\smf x \simeq t \cdot \sigma^\smf x$, so $s \cdot \sigma^\smf x \sim_{\alpha_\sigma} t \cdot \sigma^\smf x$.  If $\sigma^\smf x \in T$, then $\sigma^\smf x \ltKB \sigma$, so $s \cdot \sigma^\smf x \sim_{(\alpha_{\sigma^\smf x} + 1)} t \cdot \sigma^\smf x$.  Thus  $s \cdot \sigma^\smf x \sim_{\alpha_\sigma} t \cdot \sigma^\smf x$ because $\alpha_{\sigma^\smf x} < \alpha_\sigma$.  We have shown that $s \cdot \sigma^\smf x \sim_{\alpha_\sigma} t \cdot \sigma^\smf x$ for every $x$.  Therefore $s \sigma \sim_{\alpha_\sigma + 1} t \sigma$.

In the case of the empty string $\epsilon$, we have that $s \epsilon = s$ and $t \epsilon = t$ and therefore that $s \sim_{\alpha_\epsilon + 1} t$.  We may also observe that $\epsilon$ is the $\ltKB$-maximum element of $T$ and therefore that $\alpha = \alpha_\epsilon + 1$.  
So $s \sim_\alpha t$.  Thus $s \approx t$.
\end{proof}

Notice that Lemma~\ref{lem-EquivChar} also implies that the $\approx$ relation is $\Pi^1_1$.  Moreover, the proof of Lemma~\ref{lem-EquivChar} gives another proof that $\ord(\K_1) = \omega_1^{CK}$ and therefore that ${\approx} = {\sim_\OCK}$.  Let $s$ and $t$ be two closed terms.  If $s \approx t$, then the tree $T$ from the proof of Lemma~\ref{lem-EquivChar} is well-founded, and $s \sim_\alpha t$ for the ordinal $\alpha$ isomorphic to $(T, \ltKB)$.  The tree $T$ is $\Delta^0_2$ and the $\ltKB$ relation is computable on $\omega^{<\omega}$, 
so $\alpha < \omega_1^{\emptyset'} = \omega_1^{CK}$, where the equality is by~\cite{Sacks}*{Corollary~II.7.4}.

\begin{theorem}\label{thm-EquivComplexity}
The relation $\approx$ on $\K_1$ is $\Pi^1_1$-complete.
\end{theorem}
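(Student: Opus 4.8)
The relation ${\approx}$ is already known to be $\Pi^1_1$ (by Lemma~\ref{lem-EquivChar}, or equivalently by the proof of Theorem~\ref{thm:upper}), so the content of the theorem is $\Pi^1_1$-hardness. The plan is to reduce the $\Pi^1_1$-complete set $\mathrm{WF} = \{e : T_e \text{ is well-founded}\}$ to ${\approx}$, where $(T_e)_{e \in \omega}$ is a fixed uniformly computable sequence of trees $T_e \subseteq \omega^{<\omega}$ for which $\mathrm{WF}$ is $\Pi^1_1$-complete (such a sequence exists by the normal form theorem for $\Pi^1_1$ sets, cf.\ \cite{Sacks}; and by replacing $T_e$ with $T_e \cup \{\epsilon\}$ we may assume $\epsilon \in T_e$ for every $e$). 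By Lemma~\ref{lem-EquivChar} and the tree $T$ appearing in its proof, for any $s, t \in \K_1$ we have $s \approx t$ if and only if the tree $D_{s,t} = \{\sigma \in \omega^{<\omega} : s\sigma \nsimeq t\sigma\}$ is well-founded. Hence it suffices to compute, uniformly from $e$, a pair $s_e, t_e \in \K_1$ with $D_{s_e,t_e} = T_e$; then $e \mapsto \langle s_e, t_e\rangle$ witnesses $\mathrm{WF} \leqm {\approx}$.

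Fix $e$ and write $T = T_e$, and fix a natural number $c$ (its value is irrelevant; say $c$ is an index for the empty function). Using the recursion theorem, I would construct total computable functions $\mathrm{st}, \mathrm{tt} \colon \omega^{<\omega} \to \omega$ satisfying, for all $\sigma \in \omega^{<\omega}$ and all $a \in \omega$,
\[
\mathrm{st}(\sigma) \cdot a \simeq
\begin{cases}
\mathrm{st}(\sigma^\smf a) & \text{if } \sigma^\smf a \in T,\\
c & \text{otherwise,}
\end{cases}
\qquad
\mathrm{tt}(\sigma) \cdot a \simeq
\begin{cases}
\mathrm{tt}(\sigma^\smf a) & \text{if } \sigma^\smf a \in T,\\
c & \text{otherwise,}
\end{cases}
\]
and moreover with $\mathrm{st}(\sigma)$ always even and $\mathrm{tt}(\sigma)$ always odd, so that $\mathrm{st}(\sigma) \neq \mathrm{tt}(\sigma)$ for all $\sigma$. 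The even/odd requirement is met by padding the relevant program indices \emph{inside} the defining functional, before taking the fixed point, so that the self-reference points to the padded indices; since $T$ is computable uniformly in $e$, the whole construction is uniform in $e$. Finally set $s_e = \mathrm{st}(\epsilon)$ and $t_e = \mathrm{tt}(\epsilon)$.

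It then remains to verify $D_{s_e,t_e} = T$. If $\sigma \in T$, then every initial segment of $\sigma$ is in $T$, so a straightforward induction along $\sigma$ using the displayed equations gives $s_e\sigma = \mathrm{st}(\sigma)$ and $t_e\sigma = \mathrm{tt}(\sigma)$; these are distinct natural numbers, so $s_e\sigma \nsimeq t_e\sigma$ and $\sigma \in D_{s_e,t_e}$. If $\sigma \notin T$, let $\tau$ be the longest initial segment of $\sigma$ with $\tau \in T$ (it exists because $\epsilon \in T$) and let $a$ be the entry of $\sigma$ immediately after $\tau$, so $\tau^\smf a \notin T$; then $s_e(\tau^\smf a) \simeq c \simeq t_e(\tau^\smf a)$, and applying both sides to the remaining entries of $\sigma$ amounts to literally the same computation, so $s_e\sigma \simeq t_e\sigma$ and $\sigma \notin D_{s_e,t_e}$. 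Thus $D_{s_e,t_e} = T_e$, whence $s_e \approx t_e$ iff $T_e$ is well-founded iff $e \in \mathrm{WF}$. This gives the reduction $\mathrm{WF} \leqm {\approx}$, and since ${\approx}$ is $\Pi^1_1$, it is $\Pi^1_1$-complete.

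The routine-but-genuine obstacle is the construction of $\mathrm{st}$ and $\mathrm{tt}$: one needs a single application of the recursion theorem producing, uniformly in the index for $T$, a family of indices that (i) refer correctly to the next member of the family along $T$, (ii) collapse to the common value $c$ off of $T$, and (iii) carry an even/odd tag distinguishing the $s$-family from the $t$-family. Point (iii) must be arranged \emph{before} the fixed point is taken: padding an already-obtained index would preserve the computed function but change the index, breaking the self-reference and causing $s_e$ and $t_e$ to agree below level~$1$ of $T$ (which would make $D_{s_e,t_e}$ trivially well-founded regardless of $T$).
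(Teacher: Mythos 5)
Your proof is correct and follows essentially the same approach as the paper: both reduce the $\Pi^1_1$-complete set of indices of well-founded trees to $\approx$ by invoking Lemma~\ref{lem-EquivChar} and using the recursion theorem to construct indices whose application trees mirror $T_e$. The only real difference is cosmetic --- the paper pairs a single constructed family $f(e,\cdot)$ against the fixed self-reproducing element $e_*$ (so the second coordinate of the reduction is constant and the ``avoid collision'' step is just arranging $e_* \notin \ran(g)$), whereas you build two parallel families $\mathrm{st}$ and $\mathrm{tt}$ kept distinct by an even/odd tag baked in before the fixed point is taken; both work, with the single-family version being marginally lighter.
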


\begin{proof}
The relation $\approx$ is $\Pi^1_1$ as discussed above.  We show that $\approx$ is $\Pi^1_1$-hard.

A typical $\Pi^1_1$-complete set is the set of indices of partial computable functions computing well-founded subtrees of $\omega^{<\omega}$.  Indeed, there is a uniformly computable sequence $(T_e)_{e \in \omega}$ of trees such that the set $\{e : \text{$T_e$ is well-founded}\}$ is $\Pi^1_1$-complete (see, for example,~\cite{CenzerRemmelIndex}*{Theorem~15}).

Using the recursion theorem, fix an index $e_* \in \K_1$ such that $e_* \cdot x = e_*$ for every $x$.

Let $g(i, e, \sigma)$ be a total computable function such that for all $i$, $e$, $\sigma$, and $x$,
\begin{align*}
\Phi_{g(i, e, \sigma)}(x) \simeq \Phi_i(e, \sigma^\smf x).
\end{align*}
By padding, we may define $g$ so that $e_* \notin \ran(g)$.  Using the recursion theorem, let $i$ be such that
\begin{align*}
\Phi_i(e, \sigma) =
\begin{cases}
e_* & \text{if $\sigma \notin T_e$}\\
g(i, e, \sigma) & \text{if $\sigma \in T_e$}.
\end{cases}
\end{align*}
Note that $\Phi_i$ is total because $g$ is total.  Also note that $\Phi_i(e, \sigma) = e_*$ if and only if $\sigma \notin T_e$ because $e_* \notin \ran(g)$.  Let $f$ be the function computed by $\Phi_i$.

\begin{claim*}
For every $e$, $\sigma$, and $x$, $f(e,\sigma) \cdot x = f(e, \sigma^\smf x)$.
\end{claim*}

\begin{proof}[Proof of Claim]
If $\sigma \notin T_e$, then $\sigma^\smf x \notin T_e$ as well, so $f(e, \sigma) \cdot x = e_* \cdot x = e_* = f(e, \sigma^\smf x)$.  If $\sigma \in T_e$, then
\begin{align*}
f(e,\sigma) \cdot x = \Phi_i(e, \sigma) \cdot x = g(i,e,\sigma) \cdot x = \Phi_{g(i,e,\sigma)}(x) = \Phi_i(e, \sigma^\smf x) = f(e, \sigma^\smf x).
\end{align*}
\end{proof}

We now show that, for every $e$, $f(e, \epsilon) \approx e_*$ if and only if $T_e$ is well-founded.  Notice that $e_* a_0 a_1 \cdots a_n = e_*$ for every sequence $a_0, a_1 \dots, a_n$.  Therefore, by Lemma~\ref{lem-EquivChar}, $f(e, \epsilon) \approx e_*$ if and only if for every sequence $a_0, a_1, a_2 \dots$, there is an $n$ such that $f(e, \epsilon) \cdot a_0a_1 \cdots a_n = e_*$.

Suppose that $T_e$ is well-founded.  Consider any sequence $a_0, a_1, a_2, \dots$.  This sequence is not a path though $T_e$, so there is an $n$ such that the string $\sigma = \la a_0, \dots, a_n \ra$ is not in $T_e$.  By the Claim and the fact that $\sigma \notin T_e$, we have that $f(e, \epsilon) \cdot a_0a_1 \cdots a_n = f(e, \sigma) = e_*$.  Thus $f(e, \epsilon) \approx e_*$.

Conversely, suppose that $T_e$ is ill-founded, and let $a_0, a_1, a_2, \dots$ be a path through $T_e$.  For each $n$, let $\sigma_n = \la a_0, \dots, a_n \ra$.  By the Claim, for each $n$ we have that $f(e, \epsilon) \cdot a_0a_1 \cdots a_n = f(e, \sigma_n) \neq e_*$, where the inequality is because $\sigma_n \in T_e$.  Thus $f(e, \epsilon) \napprox e_*$.

We have shown that $f(e, \epsilon) \approx e_*$ if and only if $T_e$ is well-founded.  The map $e \mapsto (f(e, \epsilon), e_*)$ is therefore a many-one reduction witnessing that $\{e : \text{$T_e$ is well-founded}\} \leqm {\approx}$.  Thus the $\approx$ relation is $\Pi^1_1$-hard and hence $\Pi^1_1$-complete.
\end{proof}

Notice that the many-one reduction in the proof of Theorem~\ref{thm-EquivComplexity} always produces elements of $\K_1$, as opposed to a more general closed terms.  Thus the $\approx$ relation is $\Pi^1_1$-complete when thought of either as a relation on $\K_1$ or as a relation on the closed terms over $\K_1$.

For every $X \subseteq \omega$ we have the relativized pca 
$\K_1^X$, with application $n\cdot m = \Phi^X_n(m)$. 
By relativizing Theorem~\ref{thm:K1}, we obtain 
$\ord(\K_1^X) = \omega_1^X$ (i.e.\ $\OCK$ relative to $X$). 
Note that $\omega_1^X = \OCK$ for almost every $X$, in the sense of measure on $2^\omega$ (cf.\ \cite{Sacks}*{Corollary~IV.1.6}).
More generally, if $\A$ is any countable pca, then the domain of $\A$ may be taken to be $\omega$, in which case $\A$'s partial application operation is partial computable relative to some set $X \subseteq \omega$.  For example, $X$ may be taken to be the graph $X = \{\la a, b, c \ra :  a \cdot b = c\}$ of $\A$'s partial application operation.  In the analysis given in the proof of Theorem~\ref{thm:upper} (and of Lemma~\ref{lem-EquivChar}), we may replace $\K_1$ by $\A$ and relativize to $X$ to obtain the following.

\begin{theorem}\label{thm:countable}
Let $\A$ be a countable pca.  If $\A$'s partial application operation is partial computable relative to $X$, then $\ord(\A) \leq \omega_1^X$.  In particular, $\ord(\A)$ is countable.
\end{theorem}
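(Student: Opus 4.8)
The plan is to relativize the proof of Theorem~\ref{thm:upper} to the oracle $X$. Fix a coding of the domain of $\A$ as a subset of $\omega$ witnessing that $\A$'s application is partial computable relative to $X$, and fix an effective (purely syntactic, hence computable) coding of the closed terms over $\A$ as natural numbers; write $\Omega$ for the resulting set of codes. Define the operator $\Gamma\colon\PP(\Omega\times\Omega)\to\PP(\Omega\times\Omega)$ by $\Gamma(Y)=\{(s,t)\in\Omega\times\Omega : \fa x\in\A\;(sx,tx)\in Y\}$, together with its iterates $\Gamma_\alpha$ (with $\Gamma_0=\{(s,t):s\simeq t\}$, $\Gamma_{\alpha+1}=\Gamma(\Gamma_\alpha)$, and unions at limits), exactly as in the proof of Theorem~\ref{thm:upper} but with $\A$ in place of $\K_1$. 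Then $s\sim_\alpha t\Biimp(s,t)\in\Gamma_\alpha$, and $\ord(\A)$ is the least $\alpha$ with $\Gamma_{\alpha+1}=\Gamma_\alpha$. Since $\A$'s application is partial computable relative to $X$, the operator $\Gamma$ is, relative to $X$, a monotone $\Piz$ operator, and $\Gamma_0$ is arithmetical in $X$.

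Now every ingredient of the proof of Theorem~\ref{thm:upper} relativizes to $X$. By the relativizations of the results of Gandy and Spector cited there, the closure ordinal of a monotone $\Piz$ operator relative to $X$ on $\PP(\omega)$ is at most $\omega_1^X$, and, as before, this is unaffected by $\Gamma_0$ being $X$-arithmetical rather than empty. By effective transfinite recursion relative to $X$ there is an $X$-computable function sending each $a\in\O^X$ to a $\Delta^1_1(X)$ index of $\Gamma_{|a|}$, so that ``$a\in\O^X\aaa(sx,tx)\in\Gamma_{|a|}$'' is a $\Pi^1_1(X)$ predicate of $x$ and $a$. Suppose $s\sim_{\omega_1^X+1}t$, i.e.\ $\fa x\in\omega\;\ex\alpha<\omega_1^X\;sx\sim_\alpha tx$. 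Uniformizing the above $\Pi^1_1(X)$ predicate (Kreisel's theorem, relativized) yields a total $\Sigma^1_1(X)$ function $p$ with $p(x)\in\O^X$ and $(sx,tx)\in\Gamma_{|p(x)|}$ for all $x$. Then $\{p(x):x\in\omega\}$ is a $\Sigma^1_1(X)$ subset of $\O^X$, so by $\Sigma^1_1$-bounding relative to $X$ (Theorem~\ref{bounding}, relativized) there is an $X$-computable ordinal $\alpha$ with $|p(x)|<\alpha$ for all $x$. Hence $\fa x\;(sx,tx)\in\Gamma_\alpha$, so $(s,t)\in\Gamma_{\alpha+1}$, i.e.\ $s\sim_{\alpha+1}t$ with $\alpha+1<\omega_1^X$. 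This shows $s\sim_{\omega_1^X+1}t\Imp s\sim_{\omega_1^X}t$, so $\Gamma_{\omega_1^X+1}=\Gamma_{\omega_1^X}$ and therefore $\ord(\A)\leq\omega_1^X$.

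For the final sentence, recall that a countable pca always has its partial application operation partial computable relative to some $X\subseteq\omega$: after coding the domain as a subset of $\omega$, one may take $X=\{\la a,b,c\ra:a\cdot b=c\}$, the graph of the operation. By the first part, $\ord(\A)\leq\omega_1^X<\omega_1$, the last inequality because there are only countably many $X$-computable well-orders. I do not anticipate a genuine obstacle; the only point requiring attention is verifying that each tool used in the proof of Theorem~\ref{thm:upper} — the closure-ordinal bound for monotone $\Piz$ operators, effective transfinite recursion, $\Pi^1_1$ uniformization, and $\Sigma^1_1$-bounding — has its expected relativization to an arbitrary oracle $X$, and that nothing is disturbed by replacing $\K_1$ with a countable pca whose application is computable relative to $X$. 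This is routine.
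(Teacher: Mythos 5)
Your proposal is correct and follows the same route the paper takes: the paper's proof is precisely to replace $\K_1$ by $\A$ in the argument of Theorem~\ref{thm:upper} and relativize everything (the $\Pi^0_1$ monotone operator bound, effective transfinite recursion, $\Pi^1_1$ uniformization, and $\Sigma^1_1$-bounding) to $X$, and to take $X$ to be the graph of the application operation for the ``in particular'' clause. You have spelled out the relativization in more detail than the paper, which simply asserts it, but the substance is the same.
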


\section{Arithmetical complexity in $\K_1$} \label{sec:arithmetical}

We work only with $\K_1$ in this section and the next.
In the proof of Theorem~\ref{thm:upper} we observed that 
the relation $s \sim_\alpha t$ on the closed terms over $\K_1$ is 
(uniformly) hyperarithmetical for every $\alpha <\OCK$.
Writing out the definitions of the first levels of the hierarchy, 
we see the following:\footnote{Additionally, $\sim_0$ is decidable when restricted to $\K_1$.}
\begin{align*}
\sim_0 &\text{ is } \Delta^0_2 \\
\sim_n &\text{ is } \Pi^0_2 \text{ for every $n > 0$} \\
\sim_\omega &\text{ is } \Sigma^0_3 \\  
\sim_{\omega + n} &\text{ is } \Pi^0_4 \text{ for every $n > 0$} \\
\sim_{\omega 2} &\text{ is } \Sigma^0_5 \\
&\vdots \\
\sim_{\omega k} &\text{ is } \Sigma^0_{2k+1} \text{ for every $k > 0$} \\
\sim_{\omega k + n} &\text{ is } \Pi^0_{2k+2} \text{ for every $k$ and $n > 0$}.
\end{align*}
See Lemma~\ref{lem-HypDefF} below for a proof that encompasses the full hyperarithmetical hierarchy.

We now verify that $\sim_\alpha$ is complete at the indicated level of the arithmetical hierarchy for all $0 < \alpha < \omega^2$.  In Section~\ref{sec:hyp}, we show that this pattern extends to the full hyperarithmetical hierarchy.  We use different proofs for the arithmetical levels and the non-arithmetical levels.  The hardness proof for the arithmetical levels given in this section does not seem to easily generalize to the $\sim_{\omega^2}$ / $\Sigma^0_\omega$ level and beyond.  On the other hand, the hardness proof in Section~\ref{sec:hyp} is off-by-one at the arithmetical levels.  The strategy of Section~\ref{sec:hyp} restricts to hereditarily total elements of $\K_1$.  Such a strategy cannot work at the arithmetical levels because, for example, $\sim_1$ is $\Pi^0_1$ when restricted to total elements.  Thus strategy of Section~\ref{sec:hyp} only shows that $\sim_1$ is $\Pi^0_1$-hard, whereas here we show that $\sim_1$ is $\Pi^0_2$-hard.  This off-by-one discrepancy catches up at the $\sim_{\omega^2}$ / $\Sigma^0_\omega$ level and thus yields the desired completeness at the non-arithmetical levels.

As with the proof of Theorem~\ref{thm-EquivComplexity}, our many-one reductions to the $\sim_\alpha$ relations always produce elements of $\K_1$, as opposed to more general closed terms.  Thus the completeness results hold when the $\sim_\alpha$ relations are thought of either as relations on $\K_1$ or as relations on the closed terms over $\K_1$.

In this section, all formulas are finitary.

\begin{lemma}\label{lem-ReduceBy1}
For every ordinal $\alpha$, ${\sim_{\alpha + 1}} \equivm {\sim_{\alpha + 2}}$.
\end{lemma}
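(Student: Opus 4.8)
The plan is to establish both many-one reductions ${\sim_{\alpha+1}} \leqm {\sim_{\alpha+2}}$ and ${\sim_{\alpha+2}} \leqm {\sim_{\alpha+1}}$ using Lemma~\ref{lem-AddK}, which already tells us that $s \sim_\alpha t$ if and only if $Ks \sim_{\alpha+1} Kt$. Iterating this once more, we also get $s \sim_{\alpha+1} t$ if and only if $Ks \sim_{\alpha+2} Kt$, which is essentially one direction of the equivalence in disguise. So the real content is to move between consecutive levels, and the key observation is that both $\sim_{\alpha+1}$ and $\sim_{\alpha+2}$ are ``$\forall x$'' relations built on top of $\sim_\alpha$, so applying $K$ shifts the level by exactly one in a way that is computable on indices.

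First I would handle ${\sim_{\alpha+1}} \leqm {\sim_{\alpha+2}}$. Given closed terms $s, t$ over $\K_1$ (coded as natural numbers), the reduction sends the pair $(s,t)$ to $(Ks, Kt)$. Here $Ks$ and $Kt$ are again closed terms over $\K_1$, and crucially an index for $Ks$ can be computed uniformly from an index for $s$ (using a fixed index for the combinator $K$ and the $S$-$m$-$n$ theorem to form the application $K \cdot s$). By Lemma~\ref{lem-AddK}, $s \sim_{\alpha+1} t \Biimp Ks \sim_{\alpha+2} Kt$, so this is the desired many-one reduction. Since the reduction always outputs closed terms of the specific shape $K(\cdot)$, and since any closed term is $\sim_1$-equivalent (hence $\sim_{\alpha+1}$-equivalent for $\alpha \geq 0$) to an element of $\K_1$, one can also arrange the reduction to land in $\K_1$ if one wants the statement for elements rather than closed terms; but as stated the reduction on closed terms already suffices.

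For the reverse reduction ${\sim_{\alpha+2}} \leqm {\sim_{\alpha+1}}$, I would again use Lemma~\ref{lem-AddK}, but now reading it as: $s \sim_{\alpha+1} t \Biimp Ks \sim_{\alpha+2} Kt$. Applying this with $\alpha+1$ in place of $\alpha$ gives $s \sim_{\alpha+2} t \Biimp Ks \sim_{\alpha+3} Kt$ — the wrong direction. Instead, the right move is to note that Lemma~\ref{lem-AddK} with parameter $\alpha+1$ states $s \sim_{\alpha+1} t \Biimp Ks \sim_{\alpha+2} Kt$; reversing this, we want to go from a $\sim_{\alpha+2}$-question about arbitrary $u,v$ down to a $\sim_{\alpha+1}$-question. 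The cleanest route: given $(u,v)$, ask instead about the pair obtained by a computable ``peeling'' operation. Concretely, by combinatory completeness there are computable maps producing, from an index for $u$, an index for a term $u'$ with $u' x \simeq u x y$-style behavior; but the simplest correct statement is just the contrapositive application of Lemma~\ref{lem-AddK}: since for all closed $p, q$ we have $p \sim_{\alpha+1} q \Biimp Kp \sim_{\alpha+2} Kq$, the map $p \mapsto Kp$ is a reduction of $\sim_{\alpha+1}$ to $\sim_{\alpha+2}$ — and the map witnessing the reverse reduction can be taken to be the identity combined with the observation that $\sim_{\alpha+2}$ restricted to the range $\{(Kp, Kq)\}$ is isomorphic (via the computable index map) to $\sim_{\alpha+1}$. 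The honest way to phrase this: for the reduction ${\sim_{\alpha+2}} \leqm {\sim_{\alpha+1}}$, given $(u,v)$ output the pair $(u', v')$ where $u'$ is a computable index for the term $\lambda x.\, u x$ — i.e., an element of $\K_1$ representing $u$ applied to one argument — so that $u' \sim_{\alpha+1} v' \Biimp (\forall x)(u' x \sim_\alpha v' x) \Biimp (\forall x)(u x \sim_\alpha v x) \Biimp u \sim_{\alpha+1} v$, which still lands at level $\alpha+1$, not $\alpha+2$.

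I expect the main obstacle to be getting the reverse reduction to actually drop a level rather than stay put or go up. The resolution is to observe that Lemma~\ref{lem-AddK} is an ``if and only if'' with $K$ on one side and the bare term on the other, so it simultaneously gives reductions in both directions between level $\alpha$ and level $\alpha+1$: the map $p \mapsto Kp$ reduces $\sim_\alpha$ to $\sim_{\alpha+1}$, and the map $p \mapsto$ (an index for $Kp$'s ``application to a dummy'', which computably recovers $p$ up to $\sim_0$) reduces $\sim_{\alpha+1}$ to $\sim_\alpha$ when restricted to terms of the form $Kp$ — but since every closed term $u$ satisfies $u \sim_0 Ku\,0$ only when... — the genuinely clean fact is simply that $p \mapsto Kp$ and its computable inverse on the image witness $\sim_\alpha \equivm \sim_{\alpha+1}$ for $\alpha$ replaced by $\alpha+1$, giving $\sim_{\alpha+1} \equivm \sim_{\alpha+2}$ directly. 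So the proof reduces to: the index map $p \mapsto Kp$ is computable and invertible on its range by a computable map, and Lemma~\ref{lem-AddK} turns this into an $\equivm$ between $\sim_\beta$ and $\sim_{\beta+1}$ for every $\beta$; take $\beta = \alpha+1$.
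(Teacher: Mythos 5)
Your reduction ${\sim_{\alpha+1}} \leqm {\sim_{\alpha+2}}$ via $(s,t) \mapsto (Ks,Kt)$ is correct and is essentially the paper's own first half (the paper also passes to $\sim_1$-equivalent elements of $\K_1$ so the outputs are genuine indices rather than terms, but that is cosmetic). The problem is the reverse direction ${\sim_{\alpha+2}} \leqm {\sim_{\alpha+1}}$. You repeatedly observe that each candidate map either keeps the level fixed or pushes it up, and then settle for what amounts to circular reasoning: you claim that the inverse of $p \mapsto Kp$ on its image witnesses the reverse reduction. But a many-one reduction must be a \emph{total} computable map defined on \emph{all} pairs of closed terms, and an arbitrary closed term $u$ is not of the form $Kp$, nor even $\sim_{\alpha+2}$-equivalent to one: if $u$ is hereditarily total with $ux \not\simeq uy$ for some $x \neq y$, no term $Kp$ (which is constant on first application) can be $\sim_2$-equivalent to $u$, let alone $\sim_{\alpha+2}$-equivalent. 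So no total reverse reduction is ever produced, and the final paragraph of your proposal does not close the gap.

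The missing idea is an uncurrying trick that genuinely collapses two universal quantifiers into one, rather than trying to undo an application of $K$. Fix a computable bijection $\la \cdot, \cdot \ra \colon \omega \times \omega \to \omega$. Given $(s,t)$, compute (uniformly, by the $S$-$m$-$n$ theorem) indices $f,g \in \K_1$ with $f \cdot \la a, b \ra \simeq sab$ and $g \cdot \la a, b \ra \simeq tab$ for all $a,b$. Then
\[
s \sim_{\alpha+2} t \ \Biimp\ \forall a,b \, \bigl( sab \sim_\alpha tab \bigr) \ \Biimp\ \forall z \, \bigl( fz \sim_\alpha gz \bigr) \ \Biimp\ f \sim_{\alpha+1} g,
\]
using that $s \sim_{\alpha+2} t$ is equivalent to a two-argument universal followed by $\sim_\alpha$, that $\la \cdot, \cdot \ra$ is a bijection, and that $\sim_0$-equivalent terms are $\sim_\alpha$-equivalent. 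The map $(s,t) \mapsto (f,g)$ is the total computable reduction ${\sim_{\alpha+2}} \leqm {\sim_{\alpha+1}}$ your proposal was circling around but never reached.
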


\begin{proof}
We first show that ${\sim_{\alpha + 1}} \leqm {\sim_{\alpha + 2}}$.  Given closed terms $s$ and $t$, we can effectively produce indices $f, g \in \K_1$ where $f \sim_1 K s$ and $g \sim_1 K t$.  Then $f \sim_{\alpha + 2} g$ if and only if $s \sim_{\alpha + 1} t$ by Lemma~\ref{lem-AddK} and because $\alpha + 1 \geq 1$.  Thus the map $(s, t) \mapsto (f, g)$ is a many-one reduction witnessing that ${\sim_{\alpha + 1}} \leqm {\sim_{\alpha + 2}}$.

Now we show that ${\sim_{\alpha + 2}} \leqm {\sim_{\alpha + 1}}$.  For this, let $\la \cdot, \cdot \ra \colon \omega \times \omega \imp \omega$ denote a computable bijection.  Given closed terms $s$ and $t$, we can effectively produce indices $f, g \in \K_1$ such that $f \cdot \la a, b \ra \simeq s a b$ and $g \cdot \la a, b \ra \simeq t a b$ for all $a$ and $b$.  The map $(s, t) \mapsto (f, g)$ is then a many-one reduction witnessing that ${\sim_{\alpha + 2}} \leqm {\sim_{\alpha + 1}}$.  To see this, first suppose that $s \sim_{\alpha + 2} t$.  Then $f \cdot \la a, b \ra \sim_0 s a b \sim_\alpha t a b \sim_0 g \cdot \la a, b \ra$ for all $a$ and $b$.  Thus $f \cdot \la a, b \ra \sim_\alpha g \cdot \la a, b \ra$ for all $\la a, b \ra$, so $f \sim_{\alpha + 1} g$.  Conversely, suppose that $s \not\sim_{\alpha + 2} t$.  Then there are $a$ and $b$ such that $f \cdot \la a, b \ra \sim_0 s a b \not\sim_\alpha t a b \sim_0 g \cdot \la a, b \ra$.  Therefore there is an $\la a, b \ra$ such that 
$f \cdot \la a, b \ra \not\sim_\alpha g \cdot \la a, b \ra$, 
so $f \not\sim_{\alpha + 1} g$.
\end{proof}

\begin{lemma}\label{lem-HardPi2}
The relation $\sim_1$ is $\Pi^0_2$-hard.
\end{lemma}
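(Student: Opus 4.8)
The plan is to reduce a fixed $\Pi^0_2$-complete set to the $\sim_1$ relation on $\K_1$. Recall that the canonical $\Pi^0_2$-complete set is $\mathrm{Tot} = \{e : \Phi_e \text{ is total}\}$, or, even more conveniently, the set $\mathrm{Cof}$-style predicate $\{e : \forall x \, \exists s \, (\text{something computable})\}$; I will work with $\mathrm{Tot}$. So the goal is: given $e$, effectively produce $f_e, g_e \in \K_1$ such that $f_e \sim_1 g_e$ if and only if $\Phi_e$ is total. Since $s \sim_1 t$ means $\forall x \, (f_e x \simeq g_e x)$, I want to arrange that the two functions agree on input $x$ exactly when $\Phi_e$ behaves well up to stage corresponding to $x$, and disagree (one defined, the other not, or both defined but unequal) as soon as $\Phi_e$ fails to converge somewhere.

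First I would set up the two functions concretely. Let $g_e$ be an index (obtained from the recursion theorem or $S$-$m$-$n$, uniformly in $e$) such that $g_e \cdot x \ua$ for every $x$ — for instance $g_e = g$ is a single fixed index with $\Phi_g(x)\ua$ for all $x$, not depending on $e$ at all. Then define $f_e$ by: on input $x$, run $\Phi_e(0), \Phi_e(1), \dots, \Phi_e(x)$ and, if all of these converge, diverge (loop forever); if this search never completes — because some $\Phi_e(i)\ua$ for $i \le x$ — then $\Phi_{f_e}(x)$ diverges too, since the machine is stuck in that search. Wait: that makes $f_e x \ua$ always, which is useless. Instead I invert it: define $f_e$ so that $\Phi_{f_e}(x)$ converges (say to $0$) if and only if the search that checks convergence of $\Phi_e(i)$ for all $i \le x$ fails to complete — but that's not computable as a halting condition either. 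The correct move is the standard one: define $\Phi_{f_e}(x) \simeq 0$ if $\neg(\forall i \le x)(\Phi_e(i)\da)$ is witnessed, which is again $\Sigma^0_1$ in a way we can't run. Let me reconsider.

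The clean approach: exploit that $\sim_1$ allows an undefined term to be $\sim_1$-equivalent to a total-looking one only when BOTH sides are everywhere undefined, so I should make agreement the "bad" case. Define $g_e$ to be a fixed index with $\Phi_{g}(x)\da = 0$ for all $x$ (totally $0$). Define $f_e$ by $\Phi_{f_e}(x) \simeq$ "search for the least $i$ with $\Phi_e(i)\ua$... "—no. Use instead: $\Phi_{f_e}(x) \da = 0$ if $(\forall i \le x)(\Phi_e(i)\da)$, and $\Phi_{f_e}(x)\ua$ otherwise; this $f_e$ is computable uniformly in $e$ because we just run $\Phi_e(0),\dots,\Phi_e(x)$ in sequence and output $0$ once all have halted, looping forever otherwise. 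Then: if $\Phi_e$ is total, $f_e x \da = 0 = g x$ for every $x$, so $f_e \sim_1 g$; if $\Phi_e$ is not total, say $\Phi_e(i_0)\ua$, then for every $x \ge i_0$ we have $f_e x \ua$ while $g x \da = 0$, so $f_e \not\sim_1 g$. Hence $e \mapsto (f_e, g)$ witnesses $\mathrm{Tot} \leq_{\mathrm m} {\sim_1}$, and since $\mathrm{Tot}$ is $\Pi^0_2$-complete, $\sim_1$ is $\Pi^0_2$-hard.

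The main obstacle is essentially bookkeeping rather than conceptual: one must be careful that the equivalence is on closed terms (or elements) of $\K_1$ and that $f_e$ is produced effectively and uniformly — this is immediate from the $S$-$m$-$n$ theorem. A secondary point worth a sentence: the reduction produces genuine elements of $\K_1$ (natural-number indices), not arbitrary closed terms, so the hardness holds in both readings of $\sim_1$. I would also note that this lemma, combined with Lemma~\ref{lem-ReduceBy1}, immediately upgrades to $\sim_{n}$ being $\Pi^0_2$-hard for every $n > 0$ via the many-one equivalences, which is presumably how the surrounding development proceeds. No genuinely hard step is expected here; the lemma is the base case of the arithmetical hardness analysis, and the content is the standard $\mathrm{Tot}$-reduction adapted to the everywhere-undefined-vs-everywhere-zero dichotomy that $\sim_1$ provides.
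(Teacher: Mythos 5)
Your reduction from $\mathrm{Tot}$ is correct and at its core matches the paper's: both arrange that $f$ converges at input $n$ exactly when the inner $\Sigma^0_1$ condition holds there, while $g$ is total on one application, so that $f \sim_1 g$ holds iff the $\Pi^0_2$ predicate is true. The substantive difference is that the paper proves a strictly stronger, $\ell$-parameterized version: taking $g = d_\ell = K^\ell c$ for a fixed everywhere-undefined $c$, they additionally get $f \sim_{\ell+1} g$ unconditionally and $f \not\sim_\ell g$ in the failure case. Your constant-$0$ choice of $g$ does not supply these extra properties --- e.g.\ $g\,x\,y = 0 \cdot y = \Phi_0(y)$ is not controlled, so $f \sim_2 g$ can fail even when the predicate holds --- and those properties are exactly what the proof of Lemma~\ref{lem-HardOmegaSigma3} invokes. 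So your argument proves the literal statement of the lemma, but it cannot be substituted wholesale for the paper's proof without adjusting the downstream induction.
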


\begin{proof}
Let $\forall n \exists m \; \psi(z, n, m)$ be a $\Pi^0_2$ formula, where $\psi$ is $\Sigma^0_0$.  We show that, given $z$ and $\ell \geq 1$, we can effectively produce indices $f$ and $g$ such that
\begin{itemize}
\item $f \sim_1 g$ if $\forall n \exists m \; \psi(z, n, m)$ holds,
\item $f \not\sim_\ell g$ if $\forall n \exists m \; \psi(z, n, m)$ fails, and
\item $f \sim_{\ell + 1} g$ regardless of whether or not $\forall n \exists m \; \psi(z, n, m)$ holds.
\end{itemize}

Fix, say, $\ell = 1$.  Then the map $z \mapsto (f, g)$ witnesses that $\{z : \forall n \exists m \; \psi(z,n,m)\} \leqm {\sim_1}$.  It follows that $\sim_1$ is $\Pi^0_2$-hard.  All that is required for this many-one reduction is that $f \sim_1 g$ if and only if $\forall n \exists m \; \psi(z, n, m)$ holds.  The extra features are useful for the proof of Lemma~\ref{lem-HardOmegaSigma3}.

Fix an index $c$ such that $c x \ua$ for all $x$.  Define indices $d_\ell$ for each $\ell \geq 0$ by $d_0 = c$ and $d_{\ell + 1} = K d_\ell$.  Then for every $\ell$ and every $x_0, \dots, x_\ell$, we have that $d_\ell x_0 \cdots x_{\ell - 1}\da = c$ and that $d_\ell x_0 \cdots x_{\ell - 1} x_\ell\ua$.

Given $z$ and $\ell \geq 1$, let $g = d_\ell$ and compute an index $f$ such that
\begin{align*}
\Phi_f(n) \simeq 
\begin{cases}
d_{\ell-1} & \text{if $\exists m \; \psi(z, n, m)$}\\
\ua & \text{otherwise}.
\end{cases}
\end{align*}
Notice that $f x_0 \cdots x_\ell \ua$ and $g x_0 \cdots x_\ell \ua$ for all $x_0, \dots, x_\ell$, so $f \sim_{\ell + 1} g$.  Suppose that $\forall n \exists m \; \psi(z, n, m)$ holds.  Then $f n = d_{\ell-1} = g n$ for every $n$, so $f \sim_1 g$.  Conversely, suppose that $\exists m \; \psi(z, n, m)$ fails for some $n$.  Let $x_1 = \cdots = x_{\ell-1} = 0$.  Then $f n \ua$ and so $f n x_1 \cdots x_{\ell-1}\ua$, but $g n x_1 \cdots x_{\ell-1}\da = c$.  Thus $f \not\sim_\ell g$.  This completes the proof.
\end{proof}

By combining Lemma~\ref{lem-ReduceBy1} and Lemma~\ref{lem-HardPi2}, we see that $\sim_n$ is $\Pi^0_2$-hard for each $n > 0$.  Now we show that $\sim_\omega$ is $\Sigma^0_3$-hard.

\begin{lemma}\label{lem-HardOmegaSigma3}
The relation $\sim_\omega$ is $\Sigma^0_3$-hard.
\end{lemma}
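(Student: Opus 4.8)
The plan is to reduce a $\Sigma^0_3$-complete set to $\sim_\omega$ by building, for each parameter $z$, a pair $f, g \in \K_1$ whose $\omega$-equivalence encodes a $\Sigma^0_3$ predicate. A canonical $\Sigma^0_3$-complete set is $\{z : \exists k \, \forall n \, \exists m \, \psi(z,k,n,m)\}$ with $\psi$ being $\Sigma^0_0$. The idea is to use the first application argument to $f$ and $g$ to ``guess'' the witness $k$: on input $k$, the terms $f \cdot k$ and $g \cdot k$ should be $\sim_{\omega}$-equivalent (in fact $\sim_1$-equivalent) if the $\Pi^0_2$ statement $\forall n \, \exists m \, \psi(z,k,n,m)$ holds, and $\not\sim_\omega$ (indeed $\not\sim_\ell$ for a suitable finite $\ell$) otherwise. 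Then $f \sim_\omega g$ iff $f \sim_\ell g$ for some $\ell$, and since $\sim_\omega$ restricted to $f,g$ amounts to $\exists \ell \, (f \sim_\ell g)$, we want $f \sim_\ell g$ to hold exactly when some guess $k < $ (something bounded by $\ell$) succeeds. This is exactly where the ``extra features'' of Lemma~\ref{lem-HardPi2} come in: that lemma produces, given $z$, $k$, and $\ell \geq 1$, indices $f_{z,k}$ and $g_{z,k}$ with $f_{z,k} \sim_1 g_{z,k}$ if $\forall n \exists m \, \psi(z,k,n,m)$ holds, $f_{z,k} \not\sim_\ell g_{z,k}$ if it fails, and $f_{z,k} \sim_{\ell+1} g_{z,k}$ unconditionally.

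**The main steps** I would carry out are as follows. First, fix a $\Sigma^0_3$-complete predicate $\exists k \, R(z,k)$ with $R(z,k) \equiv \forall n \exists m \, \psi(z,k,n,m)$ a $\Pi^0_2$ predicate, uniformly in $k$. Second, apply (the uniform version of) Lemma~\ref{lem-HardPi2}: for each $z$ and each $k$, using the parameter $\ell = k+1$ (so that the ``failure'' gap grows with $k$), produce indices $f_{z,k}$ and $g_{z,k}$ such that $f_{z,k} \sim_1 g_{z,k}$ if $R(z,k)$, $f_{z,k} \not\sim_{k+1} g_{z,k}$ if $\neg R(z,k)$, and $f_{z,k} \sim_{k+2} g_{z,k}$ always. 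Third, use the S-m-n theorem to produce $f, g \in \K_1$ with $f \cdot k = f_{z,k}$ and $g \cdot k = g_{z,k}$ for all $k$. Fourth, verify the reduction: if $R(z,k)$ holds for some $k$, then for every $j$ we have $f \cdot j \sim_{\max(1, j+2)} g \cdot j$, hence $f \cdot j \sim_{\omega} g \cdot j$ for all $j$, but more carefully we need a \emph{single} finite $\ell$ with $f \sim_\ell g$; taking $\ell$ large enough will not work directly since for $j > \ell$ we only get $\sim_{j+2}$. The resolution: we do \emph{not} need $f \cdot j \sim_\ell g \cdot j$ for all $j$ simultaneously at a fixed $\ell$ — we instead observe that $f \sim_\omega g$ iff $\exists \ell \, \forall j \, (f\cdot j \sim_\ell g \cdot j)$ — no, that is still problematic. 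The correct approach is to make the \emph{unconditional} equivalence uniform: redefine so that $f_{z,k} \sim_{\ell+1} g_{z,k}$ with a \emph{fixed} $\ell$ independent of $k$ — but then failure only gives $\not\sim_\ell$, killing the finite separation needed for $\sim_\omega$. The actual fix, which I expect the paper uses, is to encode the witness differently: let $f \cdot k$ feed into a structure where failure of \emph{all} guesses $k$ propagates to $\not\sim_n$ for every finite $n$, while success of one guess collapses everything to a fixed finite level.

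**So the real construction** I would use: define $f$ and $g$ so that $f \cdot k = f_{z,k}$ and $g \cdot k = g_{z,k}$ where now Lemma~\ref{lem-HardPi2} is invoked with the \emph{fixed} parameter $\ell = 1$ — giving $f_{z,k} \sim_1 g_{z,k}$ if $R(z,k)$, $f_{z,k} \not\sim_1 g_{z,k}$ if $\neg R(z,k)$, and $f_{z,k} \sim_2 g_{z,k}$ always. Then if $\exists k \, R(z,k)$: there is a $k$ with $f \cdot k \sim_1 g \cdot k$, and for all other $j$ we have $f \cdot j \sim_2 g \cdot j$, hence $f \cdot j \sim_1 g \cdot j$ fails possibly but $f \cdot j \sim_2 g\cdot j$ holds for all $j$, so... we still face the same issue. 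The genuine resolution is that $f \sim_\omega g$ means $\exists n \, (f \sim_n g)$, and $f \sim_2 g$ means $\forall j \, (f\cdot j \sim_1 g \cdot j)$, which need not hold. The paper's trick must instead be: on input $k$, check whether the \emph{first} $k$ disjuncts fail, and only ``activate'' the hard $\Pi^0_2$-test when all earlier guesses have been falsified, so that a single successful guess makes $f \sim_{\ell_0} g$ for a fixed finite $\ell_0$ while total failure forces $f \not\sim_n g$ for every $n$ via the growing gaps. I would build this by having $f \cdot k$ (resp.\ $g \cdot k$) run the $\Pi^0_2$-test for $R(z,k)$ but with output-depth $k+1$, so that $f \not\sim_{k+1} g$ if $\neg R(z,k)$ while, crucially, the \emph{success} case gives $f \cdot k \sim_1 g \cdot k$ and simultaneously the behavior on \emph{larger} inputs is forced to agree to all depths by making $f,g$ ignore inputs beyond a point once a success is detected — this requires a search, hence the $\Sigma^0_3$ (not just $\Pi^0_2$) complexity. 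The \textbf{main obstacle} is precisely arranging that a single existential witness collapses the relation to a fixed finite level uniformly, while the negative case spreads across all finite levels; this is delicate because $\K_1$ is non-uniform and one cannot ``detect success'' computably. I expect the paper handles this by a clever interleaving where $f$ and $g$ agree unconditionally except on a computably-located ``test zone'' whose depth is governed by the quantifier structure, so that $\sim_\omega$ is hit iff the search for a witness terminates — which is the $\Sigma^0_3$ condition.
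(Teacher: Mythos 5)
Your first attempt --- invoking Lemma~\ref{lem-HardPi2} with the varying parameter $\ell = k+1$ so that $f\cdot k \sim_1 g\cdot k$ when the $k$\textsuperscript{th} disjunct holds, $f\cdot k \not\sim_{k+1} g\cdot k$ when it fails, and $f\cdot k \sim_{k+2} g\cdot k$ unconditionally --- is in fact exactly the paper's construction, and you correctly identify the apparent obstruction: a single witness $k$ gives $f\cdot k \sim_1 g\cdot k$, but for $j > k$ you only see the unconditional bound $\sim_{j+2}$, which does not yield a uniform finite level $\ell$ with $f \sim_\ell g$. What you are missing is the standard normalization of $\Sigma^0_3$: one may assume without loss of generality that the $\Pi^0_2$ matrix is monotone in the outer existential variable, i.e.\ $\psi(z,k) \Rightarrow \psi(z,k+1)$, by replacing $\exists k\,\psi(z,k)$ with the equivalent $(\exists k)(\exists p < k)\,\psi(z,p)$ and pushing the bounded quantifier inward. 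Under monotonicity, a witness $n_0$ forces $\psi(z,j)$ to hold for \emph{all} $j \geq n_0$, so $f\cdot j \sim_1 g\cdot j$ for every $j \geq n_0$; for $j < n_0$ the unconditional bound $f\cdot j \sim_{j+2} g\cdot j$ suffices because $j+2 \leq n_0+1$. Hence $f\cdot j \sim_{n_0+1} g\cdot j$ for all $j$, so $f \sim_{n_0+2} g$, and $f \sim_\omega g$. Conversely, if no $k$ works, then $f\cdot k \not\sim_{k+1} g\cdot k$ for every $k$, so $f \not\sim_{k+1} g$ for every $k$ and $f \not\sim_\omega g$.

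Your second and third attempts go astray: fixing $\ell = 1$ loses the growing gaps needed for the negative direction, and the ``detect success and ignore later inputs'' idea is not computably implementable (detecting that $\forall n \exists m\,\psi(z,k,n,m)$ holds is a $\Pi^0_2$ event, not a $\Sigma^0_1$ one, so you cannot wait for it and then switch behavior). You correctly diagnosed the core difficulty --- collapsing the positive case to a fixed finite level while spreading the negative case across all finite levels --- but the resolution is not a dynamic construction; it is a static, purely logical preprocessing of the $\Sigma^0_3$ formula, after which your original construction works verbatim. Also note you additionally need $f \sim_{\omega+1} g$ unconditionally, which the paper records because it is needed for the inductive step in Lemma~\ref{lem-HardOmegaKSigma2K1}; this follows immediately from $f\cdot n \sim_{n+2} g\cdot n$ for all $n$.
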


\begin{proof}
Let $\exists n \; \psi(z, n)$ be an arbitrary $\Sigma^0_3$ formula, where $\psi$ is $\Pi^0_2$.  We may assume that $\psi(z, n) \imp \psi(z, n+1)$ for every $z$ and $n$ by replacing $\exists n \; \psi(z,n)$ by the equivalent $(\exists n)(\exists p < n)\psi(z, p)$ and pushing the bounded quantifier past the unbounded quantifiers.

We show that, given $z$, we can effectively produce indices $f, g \in \K_1$ such that
\begin{itemize}
\item $f \sim_\omega g$ if and only if $\exists n \; \psi(z, n)$ holds and
\item $f \sim_{\omega + 1} g$ regardless of whether or not $\exists n \; \psi(z, n)$ holds.
\end{itemize}

Given such a procedure, the map $z \mapsto (f, g)$ witnesses that $\{z : \exists n \; \psi(z, n)\} \leqm {\sim_\omega}$.  It follows that $\sim_\omega$ is $\Sigma^0_3$-hard.  The additional $f \sim_{\omega + 1} g$ requirement is not needed for the many-one reduction, but it is helpful for the proof of Lemma~\ref{lem-HardOmegaKSigma2K1}.

By the proof of Lemma~\ref{lem-HardPi2}, given $z$, $n$, and $\ell \geq 1$, we can effectively produce indices $a^{z, \ell}_n$ and $b^{z, \ell}_n$ such that
\begin{itemize}
\item $a^{z, \ell}_n \sim_1 b^{z, \ell}_n$ if $\psi(z, n)$ holds,

\smallskip

\item $a^{z, \ell}_n \not\sim_\ell b^{z, \ell}_n$ if $\psi(z, n)$ fails, and

\smallskip

\item $a^{z, \ell}_n \sim_{\ell + 1} b^{z, \ell}_n$ regardless of whether or not $\psi(z, n)$ holds.
\end{itemize}

Now, given $z$, define $f$ and $g$ so that $f n = a^{z, n+1}_n$ and $g n = b^{z, n+1}_n$ for each $n$.

To see that $f \sim_{\omega + 1} g$, observe that $f n = a^{z, n+1}_n \sim_{n+2} b^{z, n+1}_n  = g n$ for every $n$.  Thus $f n \sim_\omega g n$ for every $n$, so $f \sim_{\omega + 1} g$.

Suppose that there is an $n_0$ such that $\psi(z, n_0)$ holds.  We show that $f n \sim_{n_0 + 1} g n$ for every $n$.  Therefore $f \sim_{n_0 + 2} g$, so $f \sim_\omega g$.  Fix $n$.  If $n < n_0$, then $f n \sim_{n_0 + 1} g n$ because $f n \sim_{n+2} g n$ as above, and $n + 2 \leq n_0 + 1$.  Suppose instead that $n \geq n_0$.  Then $\psi(z, n)$ holds because $\psi(z, n_0)$ holds and $n \geq n_0$.  Thus $f n = a^{z, n+1}_n \sim_1 b^{z, n+1}_n = g n$.  That is, $f n \sim_1 g n$, so also $f n \sim_{n_0 + 1} g n$.

Finally, suppose that $\psi(z, n)$ fails for every $n$.  Then 
$f n = a^{z, n+1}_n \not\sim_{n + 1} b^{z, n+1}_n = g n$ for every $n$.  Thus also 
$f \not\sim_{n + 1} g$ for every $n$, so $f \not\sim_\omega g$.  This completes the proof.
\end{proof}

The following lemma helps us generalize Lemma~\ref{lem-HardOmegaSigma3} to the $\sim_{\omega k}$ relations for all $k \geq 1$.  In Lemma~\ref{lem-ArithHardHelper} and Lemma~\ref{lem-HardOmegaKSigma2K1} we distinguish between a computable bijective encoding of pairs $\la \cdot, \cdot \ra_2 \colon \omega \times \omega \imp \omega$ and a computable bijective encoding of all finite strings $\la \cdot \ra \colon \omega^{<\omega} \imp \omega$.

\begin{lemma}\label{lem-ArithHardHelper}
For each $n, m \in \omega$, let $\sigma_{n,m} = {\la n, m \ra_2}^\smf 0^n$ (i.e., the string of length $n+1$ with $\sigma_{n,m}(0) = \la n, m \ra_2$ and $\sigma_{n,m}(i) = 0$ for all $0 < i \leq n$).  Given an index $e$ for a computable sequence $(a_{n,m})_{n,m \in \omega}$, we can effectively produce an $f \in \K_1$ such that
\begin{itemize}
\item $f \sigma_{n,m} = a_{n,m}$ for all $n, m \in \omega$, and
\item $f \tau \ua$ if $\tau$ is $\sqsubseteq$-incomparable with $\sigma_{n,m}$ for all $n, m \in \omega$.
\end{itemize}
\end{lemma}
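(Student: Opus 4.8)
The plan is to construct $f$ with the $s$-$m$-$n$ theorem so that, upon reading its first argument $k = \la n, m \ra_2$, it enters a ``countdown'' that accepts exactly $n$ further inputs, diverges as soon as one of them is nonzero, and returns $a_{n,m}$ once it has read $n$ zeros. First fix a computable function $h$ with $\Phi_{h(q)}(x) \simeq q$ when $x = 0$ and $\Phi_{h(q)}(x)\ua$ when $x \neq 0$, and define $r \colon \omega^2 \to \omega$ by $r(0, p) = p$ and $r(j+1, p) = h(r(j, p))$; this $r$ is total computable. Given the index $e$, the $s$-$m$-$n$ theorem then yields an index $f$, computable from $e$, with
\begin{align*}
f \cdot k \simeq r\bl (k)_0,\ \Phi_e((k)_0, (k)_1) \br,
\end{align*}
where $(k)_0$ and $(k)_1$ are the two components of $k$ under $\la \cdot, \cdot \ra_2$. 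Since $(a_{n,m})_{n,m}$ is a total computable sequence, $f \cdot k\da$ for every $k$; in particular $f \cdot \la n, m \ra_2 = r(n, a_{n,m})$.

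For the first bullet: since $\Phi_{h(q)}(0) = q$, applying $r(n, a_{n,m})$ to a string of $n$ zeros steps through $r(n, a_{n,m}), r(n-1, a_{n,m}), \dots, r(0, a_{n,m}) = a_{n,m}$. Hence $f \sigma_{n,m} = a_{n,m}$; for $n = 0$ this is just $f \cdot \la 0, m \ra_2 = a_{0,m}$.

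For the second bullet, suppose $\tau$ is incomparable with every $\sigma_{n,m}$. Then $\tau \neq \epsilon$, since $\epsilon$ is comparable with everything, so we may write $k = \tau(0) = \la n, m \ra_2$. Because $\la \cdot, \cdot \ra_2$ is a bijection, $\tau$ is automatically incomparable with $\sigma_{n',m'}$ whenever $(n', m') \neq (n, m)$, so $\tau$ must be incomparable with $\sigma_{n,m}$ itself. As $\la k \ra \sqsubseteq \sigma_{n,m}$, this forces $|\tau| \geq 2$; and as $\sigma_{n,m}(i) = 0$ for $1 \leq i \leq n$, incomparability forces some $i$ with $1 \leq i \leq \min(|\tau| - 1, n)$ and $\tau(i) \neq 0$ (in particular $n \geq 1$). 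Take the least such $i$. In the evaluation of $f\tau = (f \cdot \tau(0)) \cdot \tau(1) \cdots \tau(|\tau| - 1)$, the coordinates $\tau(1), \dots, \tau(i-1)$ are all zero, so after reading them the computation is in the state $r(n - (i-1), a_{n,m})$; here $n - (i-1) \geq 1$ because $i \leq n$, so this state equals $h(r(n-i, a_{n,m}))$, which diverges on the nonzero input $\tau(i)$. Hence $f\tau\ua$, as required.

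I expect the construction and the first bullet to be routine; the one delicate point is the case analysis in the second bullet, where one must verify that at the moment the countdown first meets a nonzero coordinate of $\tau$ it is still in a genuine guarding state $r(\ell, \cdot)$ with $\ell \geq 1$ — so that divergence is actually triggered rather than a value being returned prematurely.
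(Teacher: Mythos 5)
Your proof is correct, and it takes a genuinely different route from the paper's. The paper reuses the recursive string-processing machinery from the proof of Theorem~\ref{thm-EquivComplexity}: via the recursion theorem it builds a function $h(e,\sigma)$ carrying the accumulated prefix $\sigma$ as a state, and then uses a Claim that $h(e,\sigma)\cdot x \simeq h(e, \sigma^\smf x)$ to march down any string. Your construction instead exploits the rigid shape of $\sigma_{n,m}$ (the first coordinate determines everything; the remaining $n$ coordinates must all be $0$) to replace the string-valued state with a simple countdown counter: $r(n, a_{n,m})$ is the answer wrapped in $n$ ``guard'' layers $h(\cdot)$, each of which peels off on input $0$ and diverges on anything else. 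This avoids the recursion theorem entirely (only $s$-$m$-$n$ and primitive recursion are used), and the verification is shorter and more self-contained. The trade-off is generality: the paper's recursive machinery handles arbitrary trees, which is why it can be shared with Theorem~\ref{thm-EquivComplexity}, whereas your countdown is tailored to these particular strings. The one step you flagged as delicate --- that when the computation first meets a nonzero coordinate $\tau(i)$ it is still in a state $r(\ell,\cdot)$ with $\ell\geq 1$ --- is indeed the crux and is handled correctly: with $i$ the least index $\geq 1$ where $\tau(i)\neq 0$, the minimality gives $i-1$ zeros to burn through, and $i\leq n$ gives $n-(i-1)\geq 1$, so the state $r(n-i+1,\cdot)=h(r(n-i,\cdot))$ genuinely guards.
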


\begin{proof}
The proof is similar to that of Theorem~\ref{thm-EquivComplexity}.  Let $g(i, e, \sigma)$ be a total computable function such that for all $i$, $e$, $\sigma$, and $x$,
\begin{align*}
\Phi_{g(i, e, \sigma)}(x) \simeq \Phi_i(e, \sigma^\smf x).
\end{align*}
Using the recursion theorem, let $i$ be such that
\begin{align*}
\Phi_i(e, \sigma) \simeq
\begin{cases}
\Phi_e(n,m) & \text{if $\sigma = \sigma_{n,m}$}\\
g(i, e, \sigma) & \text{if $\sigma \sqsubset \sigma_{n,m}$ for some $n$ and $m$}\\
\ua & \text{otherwise.}
\end{cases}
\end{align*}
Let $h$ be the function partially computed by $\Phi_i$.  Let $e$ be an index for the sequence $(a_{n,m})_{n,m \in \omega}$, so that $\Phi_e(n,m) = a_{n,m}$ for all $n$ and $m$.

\begin{claim*}
For every $e$, $\sigma$, and $x$, if $\sigma \sqsubset \sigma_{n,m}$ for some $n$ and $m$, then $h(e, \sigma) \cdot x \simeq h(e, \sigma^\smf x)$.
\end{claim*}

\begin{proof}[Proof of Claim]
If $\sigma \sqsubset \sigma_{n,m}$, then 
\begin{align*}
h(e, \sigma) \cdot x \simeq \Phi_i(e, \sigma) \cdot x \simeq g(i,e,\sigma) \cdot x \simeq \Phi_{g(i,e,\sigma)}(x) \simeq \Phi_i(e, \sigma^\smf x) \simeq h(e, \sigma^\smf x).
\end{align*}
\end{proof}

Take $f = h(e, \epsilon) = g(i,e,\epsilon)$, which is defined because $g$ is total.  By the Claim, for each $n$ and $m$ we have that $f \sigma_{n,m} \simeq h(e, \epsilon) \cdot \sigma_{n,m} \simeq h(e, \sigma_{n,m}) \simeq \Phi_e(n,m) = a_{n,m}$.  Thus $f \sigma_{n,m} = a_{n,m}$ for all $n$ and $m$.

Suppose that $\tau$ is $\sqsubseteq$-incomparable with $\sigma_{n,m}$ for all $n$ and $m$.  Let $\sigma \sqsubset \tau$ be the longest initial segment of $\tau$ that is $\sqsubseteq$-comparable with $\sigma_{n_0, m_0}$ for some $n_0$ and $m_0$.  Then $\sigma \sqsubset \sigma_{n_0, m_0}$, but $\sigma^\smf \tau(|\sigma|) = \tau \restr (|\sigma|+1)$ is $\sqsubseteq$-incomparable with $\sigma_{n,m}$ for all $n$ and $m$.  By the Claim, we have that
\begin{align*}
f \cdot \tau \restr (|\sigma|+1) \simeq f \cdot \sigma^\smf \tau(|\sigma|) \simeq h(e, \epsilon) \cdot \sigma^\smf \tau(|\sigma|) \simeq h(e, \sigma^\smf \tau(|\sigma|)).
\end{align*}
However, $h(e, \sigma^\smf \tau(|\sigma|))\ua$ because $\sigma^\smf \tau(|\sigma|)$ is $\sqsubseteq$-incomparable with $\sigma_{n,m}$ for all $n$ and $m$.  Therefore $(f \cdot \tau \restr (|\sigma|+1))\ua$.  Thus $f \tau \ua$ as well.  Thus $f \tau \ua$ whenever $\tau$ is $\sqsubseteq$-incomparable with $\sigma_{n,m}$ for all $n$ and $m$.
\end{proof}

\begin{lemma}\label{lem-HardOmegaKSigma2K1}
The relation $\sim_{\omega k}$ is $\Sigma^0_{2k+1}$-hard for every $k > 0$.
\end{lemma}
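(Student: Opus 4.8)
The plan is to prove Lemma~\ref{lem-HardOmegaKSigma2K1} by induction on $k$, with the base case $k = 1$ already supplied by Lemma~\ref{lem-HardOmegaSigma3}. For the inductive step, I want to reduce a $\Sigma^0_{2k+3}$ formula to $\sim_{\omega(k+1)}$ using a $\Sigma^0_{2k+1}$-hardness construction for $\sim_{\omega k}$ as a black box, exactly in the spirit of how Lemma~\ref{lem-HardOmegaSigma3} built $\sim_\omega$-hardness on top of the $\Pi^0_2$-hardness of $\sim_1$. The key point is that a $\Sigma^0_{2k+3}$ set has the form $\exists n\,\psi(z,n)$ with $\psi$ of complexity $\Pi^0_{2k+2}$, and—after the usual monotonicity normalization $\psi(z,n)\imp\psi(z,n+1)$—a $\Pi^0_{2k+2}$ predicate can be handled by a construction producing, uniformly in $z$ and $n$ (and a slack parameter $\ell$), indices $a^{z,\ell}_n,b^{z,\ell}_n$ with $a^{z,\ell}_n\sim_{\omega k}b^{z,\ell}_n$ when $\psi(z,n)$ holds, $a^{z,\ell}_n\not\sim_{\omega(k-1)+\ell}b^{z,\ell}_n$ when it fails, and $a^{z,\ell}_n\sim_{\omega(k-1)+\ell+1}b^{z,\ell}_n$ always. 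This is the analogue, one ``block'' up, of the three-clause statement in the proof of Lemma~\ref{lem-HardPi2}; obtaining it will require the complementation/padding trick from Lemma~\ref{lem-ReduceBy1} together with the induction hypothesis.

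Granting such building blocks, I would then assemble $f$ and $g$ so that $f\cdot n$ and $g\cdot n$ code the pair $(a^{z,\ell_n}_n, b^{z,\ell_n}_n)$ with $\ell_n$ growing in $n$ (say $\ell_n = n+1$), mirroring the definition $fn = a^{z,n+1}_n$, $gn = b^{z,n+1}_n$ from Lemma~\ref{lem-HardOmegaSigma3}. The verification splits into three pieces: (1) $f\sim_{\omega k + 1}g$ always, since $f n \sim_{\omega(k-1)+n+2}g n$ and $\omega(k-1)+n+2 < \omega k$ for all $n$, so $fn\sim_{\omega k}gn$; (2) if $\psi(z,n_0)$ holds for some $n_0$, then for $n\ge n_0$ we get $fn\sim_{\omega k}gn$ directly from $a^{z,\ell}_n\sim_{\omega k}b^{z,\ell}_n$ (using monotonicity of $\psi$), while for $n<n_0$ the ``always'' clause with ordinal $\omega(k-1)+n+2 \le \omega(k-1)+n_0+2 < \omega k$ suffices, hence $fn\sim_{\omega k}gn$ for all $n$ and so $f\sim_{\omega k+1}g$, which forces $f\sim_{\omega(k+1)}g$ — wait, I need $f\sim_{\omega k + d}g$ for finite $d$ to land inside $\omega(k+1)$, which it does, giving $f\sim_{\omega(k+1)}g$; (3) if $\psi(z,n)$ fails for all $n$, then $fn\not\sim_{\omega(k-1)+n+1}gn$, so $f\not\sim_{\omega(k-1)+n+2}g$ for every $n$, and since these ordinals are cofinal in $\omega k$ we get $f\not\sim_{\omega k}g$, hence $f\not\sim_{\omega(k+1)}g$ (using the argument from Theorem~\ref{thm:omega} that no single finite shift past $\omega k$ can recover $\sim_{\omega k}$). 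Here Lemma~\ref{lem-ArithHardHelper} is the tool that lets $f$ and $g$ dispatch the input $n$ cleanly while keeping irrelevant inputs undefined, so that the ordinal bookkeeping on the ``fails everywhere'' side is not polluted.

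I expect the main obstacle to be formulating and proving the shifted three-clause building-block lemma at level $\omega k$ from the induction hypothesis at level $\omega(k-1)$ — that is, getting the precise ordinal gap (fail at $\omega(k-1)+\ell$, hold at $\omega k$, always at $\omega(k-1)+\ell+1$) to come out right when one composes the $\Sigma^0_{2k-1}$-to-$\sim_{\omega(k-1)}$ hardness with a fresh layer of $\forall$ (for the $\Pi^0_{2k}$ matrix) and $\exists$ (for the outer $\Sigma^0_{2k+1}$). The quantifier-to-ordinal correspondence is: each universal quantifier costs a finite shift that opens a fresh $\omega$-block, and each existential quantifier is what's being exploited by the $\sqsubseteq$-incomparable ``undefined'' branches of Lemma~\ref{lem-ArithHardHelper}; keeping these aligned across the induction, and correctly invoking the normalization $(\exists n)(\exists p<n)\psi$ to make $\psi$ monotone in $n$, is the delicate part. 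Everything else — the recursion-theorem constructions of $f,g$, the padding to keep ranges disjoint, and the cofinality arguments for limit ordinals — is routine given the machinery already in Sections~\ref{sec:K1} and~\ref{sec:arithmetical}. I would close by noting that, combined with Lemma~\ref{lem-ReduceBy1}, this yields $\Pi^0_{2k+2}$-hardness of $\sim_{\omega k + n}$ for all $n>0$, completing the arithmetical picture stated at the start of Section~\ref{sec:arithmetical}.
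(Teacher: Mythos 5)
Your verification clauses (1) and (3) contradict each other, which exposes a systematic off-by-one-$\omega$-block error in the ordinal bookkeeping. In (1) you establish the ``always'' bound $f n \sim_{\omega(k-1)+n+2} g n$ for every $n$, hence $f n \sim_{\omega k} g n$ for all $n$, hence $f \sim_{\omega k + 1} g$ unconditionally. But $\omega k + 1 < \omega(k+1)$, so by Theorem~\ref{thm:basic} this forces $f \sim_{\omega(k+1)} g$ unconditionally. Your map $z \mapsto (f,g)$ therefore lands inside $\sim_{\omega(k+1)}$ regardless of whether $\exists n\,\psi(z,n)$ holds, which makes it useless as a reduction; in particular (3) can never succeed. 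The same error surfaces in your proposed building block: you want ``hold at $\omega k$, fail at $\omega(k-1)+\ell$, always at $\omega(k-1)+\ell+1$,'' but $\omega(k-1)+\ell+1 < \omega k$ for finite $\ell$, so ``always at $\omega(k-1)+\ell+1$'' already implies ``at $\omega k$'' unconditionally, making the hold clause carry no information. If you mean this block to be the analogue of Lemma~\ref{lem-HardPi2} pushed up by $k$ blocks of $\omega$, the ordinals should read $\omega k + 1$ / $\omega k + \ell$ / $\omega k + \ell + 1$, not $\omega(k-1) + \cdots$. Finally, the parenthetical appeal in (3) to Theorem~\ref{thm:omega} to upgrade $f \not\sim_{\omega k} g$ to $f \not\sim_{\omega(k+1)} g$ is a misuse of that theorem: it says the relations $\sim_\alpha$ differ as sets, not that a particular pair non-equivalent at level $\omega k$ stays non-equivalent at level $\omega k + j$. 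Monotonicity (Theorem~\ref{thm:basic}\ref{it-PresUp}) runs the wrong way for that inference.

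Even with the ordinals corrected, the two-stage plan (first package $\Pi^0_{2k+2}$ into a slack-parameterized block, then glue by $n$ with $\ell_n = n+1$) has a structural problem that the paper's proof is specifically designed to sidestep. You need the ``hold'' ordinal for the $\Pi^0_{2k+2}$ block to be uniformly bounded (say $\omega k + 1$) while the ``fail''/``always'' ordinals slide up with $\ell$; in Lemma~\ref{lem-HardPi2} this is achieved by the $d_\ell$ trick giving literal agreement $f n = g n$ in the positive case, but at level $\omega k$ you cannot manufacture literal agreement, and any use of $K$-padding (Lemma~\ref{lem-AddK}, Lemma~\ref{lem-ReduceBy1}) shifts the hold ordinal up along with the others, destroying the uniform bound. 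The paper avoids needing such a block by writing the $\Sigma^0_{2k+1}$ formula as $\exists n \forall m\,\psi(z,n,m)$ with $\psi$ of complexity $\Sigma^0_{2k-1}$ and using Lemma~\ref{lem-ArithHardHelper} with the strings $\sigma_{n,m} = {\la n,m \ra_2}^\smf 0^n$: the length $n+1$ of $\sigma_{n,m}$ provides the growing slack needed for cofinality in the failure case (since $f \not\sim_{\omega(k-1)+n+1} g$ holds for every $n$), while in the positive case the bound $f \sim_{\omega(k-1)+n_0+1} g$ stays below $\omega k$ because strings $\tau$ of the fixed length $n_0 + 1$ already reach past, or beyond, every relevant $\sigma_{n,m}$. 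Reproducing this in your framework would require a significantly more intricate block than the one you sketch.
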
 

\begin{proof}
We induct on $k > 0$ to show the following.  Given a $\Sigma^0_{2k+1}$ formula $\exists n \forall m \; \psi(z, n, m)$, where $\psi$ is $\Sigma^0_{2k-1}$, and a $z$, we can effectively produce indices $f, g \in \K_1$ such that
\begin{itemize}
\item $f \sim_{\omega k} g$ if and only if $\exists n \forall m \; \psi(z, n, m)$ holds and
\item $f \sim_{\omega k + 1} g$ regardless of whether or not $\exists n \forall m \; \psi(z, n, m)$ holds.
\end{itemize}

Given such a procedure, the map $z \mapsto (f, g)$ witnesses that $\{z : \exists n \forall m \; \psi(z,n,m)\} \leqm {\sim_{\omega k}}$.  It follows that $\sim_{\omega k}$ is $\Sigma^0_{2k+1}$-hard.

The base case $k = 1$ is by Lemma~\ref{lem-HardOmegaSigma3} and its proof.  Thus suppose that $k > 1$, and consider an arbitrary $\Sigma^0_{2k+1}$ formula $\exists n \forall m \; \psi(z, n, m)$, where $\psi(z, n, m)$ is $\Sigma^0_{2k-1}$.  We may assume that $\forall m \; \psi(z, n, m) \imp \forall m \; \psi(z, n+1, m)$ for every $z$ and $n$ by replacing $\exists n \forall m \; \psi(z,n,m)$ by the equivalent $(\exists n)(\exists p < n)(\forall m)\psi(z, p, m)$ and pushing the bounded quantifier past the unbounded quantifiers.

Inductively assume that, given $z$, $n$, and $m$, we can effectively produce indices $a^z_{n, m}$ and $b^z_{n, m}$ such that

\begin{itemize}
\item $a^z_{n, m} \sim_{\omega (k-1)} b^z_{n, m}$ if and only if $\psi(z,n,m)$ holds, and
\item $a^z_{n, m} \sim_{\omega (k-1) + 1} b^z_{n, m}$ regardless of whether or not $\psi(z,n,m)$ holds.
\end{itemize}

Let $\sigma_{n,m} = {\la n, m \ra_2}^\smf 0^n$ for each $n, m \in \omega$ as in Lemma~\ref{lem-ArithHardHelper}.  Now, given $z$, use Lemma~\ref{lem-ArithHardHelper} to effectively produce indices $f, g \in \K_1$ so that
\begin{itemize}
\item $f \sigma_{n,m} = a^z_{n, m}$ and $g \sigma_{n,m} = b^z_{n, m}$ for all $n, m \in \omega$, and
\item $f \tau \ua$ and $g \tau \ua$ if $\tau$ is $\sqsubseteq$-incomparable with $\sigma_{n,m}$ for all $n, m \in \omega$.
\end{itemize}

First, we show that $f \sim_{\omega k + 1} g$.  Consider any $x$, and decode $x$ as $x = \la n, m \ra_2$.  Now consider any $\tau$ of length $n$.  If $\tau = 0^n$, then $x^\smf \tau = \sigma_{n,m}$, in which case
\begin{align*}
f \cdot x^\smf\tau = f \sigma_{n,m} = a^z_{n, m} \sim_{\omega(k-1) + 1} b^z_{n, m} = g \sigma_{n,m} = g \cdot x^\smf\tau.
\end{align*}
If $\tau \neq 0^n$, then $x^\smf \tau$ is $\sqsubseteq$-incomparable with $\sigma_{n',m'}$ for all $n'$ and $m'$, so $(f \cdot x^\smf\tau)\ua$ and $(g \cdot x^\smf\tau)\ua$, which means that $f \cdot x^\smf\tau \simeq g \cdot x^\smf\tau$ and hence that $f \cdot x^\smf\tau \sim_{\omega(k-1) + 1} g \cdot x^\smf\tau$.  We therefore have that $f \cdot x^\smf\tau \sim_{\omega(k-1) + 1} g \cdot x^\smf\tau$ for all $\tau$ of length $n$.  This means that $f x \sim_{\omega(k-1) + n + 1} g x$ and therefore that $f x \sim_{\omega k} g x$.  So $f x \sim_{\omega k} g x$ for every $x$.  Thus $f \sim_{\omega k + 1} g$.

Now suppose that there is an $n_0$ such that $\forall m \; \psi(z, n_0, m)$ holds.  In this case, we show that $f \tau \sim_{\omega(k-1)} g \tau$ for every $\tau$ of length $n_0 + 1$.  It then follows that $f \sim_{\omega(k-1) + n_0 + 1} g$ and hence that $f \sim_{\omega k} g$.  So consider a $\tau$ of length $n_0 + 1$.  If $\tau$ is $\sqsubseteq$-incomparable with $\sigma_{n, m}$ for all $n$ and $m$, then $f \tau \ua$ and $g \tau \ua$, so $f \tau \simeq g \tau$, so $f \tau \sim_{\omega(k-1)} g \tau$.  Suppose instead that $\sigma_{n, m} \sqsubset \tau$ for some $n$ and $m$.  Then $\tau = {\sigma_{n,m}}^\smf \eta$ for some string $\eta$ of length $\ell > 0$.  So $f \tau = f \cdot {\sigma_{n,m}}^\smf \eta \simeq a^z_{n, m} \cdot \eta$, and $g \tau = g \cdot {\sigma_{n,m}}^\smf \eta \simeq b^z_{n, m} \cdot \eta$.  We know that $a^z_{n, m} \sim_{\omega (k-1) + 1} b^z_{n, m}$, so $a^z_{n, m} \cdot \eta \sim_{\omega (k-1)} b^z_{n, m} \cdot \eta$ because $|\eta| > 0$.  All together, we therefore have that $f \tau \sim_{\omega (k-1)} g \tau$.  Finally, suppose that $\tau \sqsubseteq \sigma_{n, m}$ for some $n$ and $m$.  It must then be that $n_0 + 1 = |\tau| \leq |\sigma_{n,m}| = n + 1$, so $n_0 \leq n$.  Therefore $\forall m \; \psi(z, n, m)$ holds because $\forall m \; \psi(z, n_0, m)$ holds and $n_0 \leq n$.  So,
\begin{align*}
f \sigma_{n,m} = a^z_{n, m} \sim_{\omega (k-1)} b^z_{n, m} =  g \sigma_{n,m}.
\end{align*}
Thus there is an $\alpha < \omega (k-1)$ such that $f \sigma_{n,m} \sim_\alpha g \sigma_{n,m}$.  Now consider any $\eta$ of length $\ell = n - n_0$.  If $\eta = 0^\ell$, then $\tau^\smf \eta = \sigma_{n,m}$, so $f \cdot \tau^\smf \eta \sim_\alpha g \cdot \tau^\smf \eta$.  If $\eta \neq 0^\ell$, then $\tau^\smf\eta$ is $\sqsubseteq$-incomparable with $\sigma_{n', m'}$ for all $n'$ and $m'$, so $(f \cdot \tau^\smf\eta) \ua \simeq (g \cdot \tau^\smf\eta) \ua$.  Thus also $f \cdot \tau^\smf\eta \sim_\alpha g \cdot \tau^\smf\eta$.  So $f \cdot \tau^\smf\eta \sim_\alpha g \cdot \tau^\smf\eta$ for every $\eta$ of length $\ell$.  This means that $f \tau \sim_{\alpha + \ell} g \tau$.  We have that $\alpha + \ell < \omega (k-1)$ because $\alpha < \omega (k-1)$ and $\omega (k-1)$ is a limit.  Thus $f \tau \sim_{\omega (k-1)} g \tau$.

Finally, suppose that $\forall m \; \psi(z, n, m)$ fails for every $n$.  We show that $f \not\sim_{\omega k} g$.  Given $n$, let $m$ be such that $\psi(z, n, m)$ fails, and consider $\sigma_{n, m}$.  Then $f \sigma_{n,m} = a^z_{n, m} \not\sim_{\omega(k-1)} b^z_{n, m} = g \sigma_{n,m}$.  As $|\sigma_{n,m}| = n+1$, this implies that $f \not\sim_{\omega(k-1) + n + 1} g$.  So $f \not\sim_{\omega(k-1) + n + 1} g$ for every $n$.  Thus $f \not\sim_{\omega k} g$.

Given $z$, we have effectively produced indices $f$ and $g$ such that
\begin{itemize}
\item $f \sim_{\omega k} g$ if and only if $\exists n \forall m \; \psi(z,n,m)$ holds, and
\item $f \sim_{\omega k + 1} g$ regardless of whether or not $\exists n \forall m \; \psi(z,n,m)$ holds.
\end{itemize}
This completes the induction step and therefore completes the proof.
\end{proof}

\begin{lemma}\label{lem-HardOmegaK1Pi2K2}
The relation $\sim_{\omega k + 1}$ is $\Pi^0_{2k + 2}$-hard for every $k$.
\end{lemma}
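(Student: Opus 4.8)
\textit{Proof proposal.}

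The plan is to obtain $\Pi^0_{2k+2}$-hardness of $\sim_{\omega k + 1}$ by prepending a single universal quantifier to the reduction witnessing $\Sigma^0_{2k+1}$-hardness of $\sim_{\omega k}$ (Lemma~\ref{lem-HardOmegaKSigma2K1}), exploiting that $\sim_{\omega k + 1}$ is by definition the relation $s \sim_{\omega k + 1} t \Leftrightarrow \forall x\,(sx \sim_{\omega k} tx)$, which mirrors the passage from $\Sigma^0_{2k+1}$ to $\Pi^0_{2k+2} = \forall\,\Sigma^0_{2k+1}$.

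First I would dispose of $k = 0$ separately: there $\sim_{\omega\cdot 0 + 1} = \sim_1$ and $\Pi^0_{2\cdot 0 + 2} = \Pi^0_2$, so this case is precisely Lemma~\ref{lem-HardPi2}. (This case must be split off because $\sim_{\omega\cdot 0} = \sim_0$ is decidable and cannot absorb a $\Sigma^0_1$ layer, so there is no base reduction to build on.) Now fix $k \geq 1$. A $\Pi^0_{2k+2}$-complete set has the form $\{z : \forall n\, \varphi(z, n)\}$ for a suitable $\Sigma^0_{2k+1}$ formula $\varphi$, so it suffices to reduce this set to $\sim_{\omega k + 1}$. Applying Lemma~\ref{lem-HardOmegaKSigma2K1} with the pair $(z, n)$ in place of its parameter, I obtain, effectively and uniformly in $z$ and $n$, indices $a^z_n, b^z_n \in \K_1$ with $a^z_n \sim_{\omega k} b^z_n$ if and only if $\varphi(z, n)$ holds (and, if it is useful later, $a^z_n \sim_{\omega k + 1} b^z_n$ in any case). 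Since $(a^z_n)_n$ and $(b^z_n)_n$ are uniformly computable in $z$, I can effectively produce from $z$ total indices $f, g \in \K_1$ with $f \cdot n = a^z_n$ and $g \cdot n = b^z_n$ for every $n$.

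What remains is the routine verification that $f \sim_{\omega k + 1} g$ if and only if $\forall n\, \varphi(z, n)$ holds, which follows by unwinding definitions: $f \sim_{\omega k + 1} g$ iff $f \cdot n \sim_{\omega k} g \cdot n$ for every $n$, iff $a^z_n \sim_{\omega k} b^z_n$ for every $n$, iff $\varphi(z, n)$ holds for every $n$. Thus $z \mapsto (f, g)$ is a many-one reduction from a $\Pi^0_{2k+2}$-complete set to $\sim_{\omega k + 1}$, establishing the lemma. As with the earlier reductions in this section, $f$ and $g$ are always genuine elements of $\K_1$, so the hardness result holds whether $\sim_{\omega k + 1}$ is read as a relation on $\K_1$ or on the closed terms over $\K_1$. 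I do not anticipate a real obstacle here: the work was already done in Lemma~\ref{lem-HardOmegaKSigma2K1}, and the only points requiring care are the uniformity of the reduction in $z$ and the separate treatment of $k = 0$.
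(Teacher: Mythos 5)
Your proof is correct and follows essentially the same route as the paper's: split off $k=0$ via Lemma~\ref{lem-HardPi2}, then for $k>0$ feed the $\Sigma^0_{2k+1}$ matrix $\psi(z,n)$ into (the uniform-in-parameters version of) Lemma~\ref{lem-HardOmegaKSigma2K1} to get $a^z_n, b^z_n$, and package these into $f,g$ with $f\cdot n = a^z_n$, $g\cdot n = b^z_n$, so that $f \sim_{\omega k+1} g$ unwinds to $\forall n\,\psi(z,n)$.
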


\begin{proof}
For $k = 0$, this is given by Lemma~\ref{lem-HardPi2}.  Let $k > 0$, and consider a $\Pi^0_{2k+2}$ formula $\forall n \; \psi(z, n)$, where $\psi(z,n)$ is $\Sigma^0_{2k+1}$.  By Lemma~\ref{lem-HardOmegaKSigma2K1} and its proof, given $z$ and $n$, we can effectively produce indices $a^z_n, b^z_n \in \K_1$ such that $a^z_n \sim_{\omega k} b^z_n$ if and only if $\psi(z, n)$ holds.  Thus given $z$, we can effectively produce indices $f, g \in \K_1$ where $f n = a^z_n$ and $g n = b^z_n$ for all $n \in \omega$.

If $\forall n \; \psi(z, n)$ holds, then $f n = a^z_n \sim_{\omega k} b^z_n = g n$ for every $n$.  Thus $f \sim_{\omega k + 1} g$.  On the other hand, if $n$ is such that $\psi(z, n)$ fails, then $f n = a^z_n \not\sim_{\omega k} b^z_n = g n$, so $f \not\sim_{\omega k + 1} g$.  Therefore the map $z \mapsto (f, g)$ witnesses that $\{z : \forall n \; \psi(z, n)\} \leqm {\sim_{\omega k + 1}}$.  Thus $\sim_{\omega k + 1}$ is $\Pi^0_{2k + 2}$-hard.
\end{proof}

\begin{theorem}\label{thm:ArithComplexity}
{\ }
\begin{itemize}
\item The relation $\sim_{\omega k}$ is $\Sigma^0_{2k + 1}$-complete for every $k > 0$.
\item The relation $\sim_{\omega k + n}$ is $\Pi^0_{2k + 2}$-complete for every $k$ and every $n > 0$.
\end{itemize}
\end{theorem}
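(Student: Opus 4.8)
The plan is to prove Theorem~\ref{thm:ArithComplexity} by combining two ingredients that, at this point, are essentially already in hand: membership of each relevant $\sim_\alpha$ in the stated class, and the matching hardness. The hardness half is a repackaging of the lemmas of this section, so the only part that still needs to be written out with any care is the membership half.

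For membership, I would argue by induction on $k$ that, uniformly in codes for the closed terms $s$ and $t$ over $\K_1$, the relation $\sim_{\omega k + n}$ is $\Pi^0_{2k+2}$ for every $n \geq 1$, and the relation $\sim_{\omega k}$ is $\Sigma^0_{2k+1}$ for every $k > 0$. The base case $k = 0$ is the observation that $s x \simeq t x$ is $\Delta^0_2$ uniformly in $x$ --- Kleene equality unwinds to the disjunction of a $\Sigma^0_1$ condition (``both sides converge to a common value'') and a $\Pi^0_1$ condition (``both sides diverge'') --- so that $\sim_1 = \forall x\,(s x \simeq t x)$ is $\Pi^0_2$, and then $\sim_{n+1} = \forall x\,(s x \sim_n t x)$ is $\Pi^0_2$ by a side induction on $n$. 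For the inductive step, $\sim_{\omega(k+1)}$ is the relation $\exists n\,(s \sim_{\omega k + n} t)$; its $n = 0$ disjunct lies in $\Sigma^0_{2k+1} \subseteq \Pi^0_{2k+2}$ (or in $\Delta^0_2 \subseteq \Pi^0_2$ when $k = 0$) and the remaining disjuncts are, uniformly in $n$, $\Pi^0_{2k+2}$, so the relation is $\Sigma^0_{2k+3} = \Sigma^0_{2(k+1)+1}$; and $\sim_{\omega(k+1)+n}$, being $\forall x_0 \cdots x_{n-1}\,(s x_0 \cdots x_{n-1} \sim_{\omega(k+1)} t x_0 \cdots x_{n-1})$, is then $\Pi^0_{2k+4} = \Pi^0_{2(k+1)+2}$. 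This is exactly the arithmetical fragment of Lemma~\ref{lem-HypDefF}, so one could equally well just cite that lemma.

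For hardness, the work has already been done. Lemma~\ref{lem-HardOmegaKSigma2K1} says that $\sim_{\omega k}$ is $\Sigma^0_{2k+1}$-hard for every $k > 0$, and Lemma~\ref{lem-HardOmegaK1Pi2K2} says that $\sim_{\omega k + 1}$ is $\Pi^0_{2k+2}$-hard for every $k$ (including $k = 0$, recovering Lemma~\ref{lem-HardPi2}). To upgrade the latter to all $n > 0$, iterate Lemma~\ref{lem-ReduceBy1}: taking $\alpha = \omega k + (n-1)$ gives $\sim_{\omega k + n} \equivm \sim_{\omega k + n + 1}$, hence $\sim_{\omega k + 1} \equivm \sim_{\omega k + n}$ for every $n \geq 1$ by transitivity, and $\Pi^0_{2k+2}$-hardness is preserved along $\leqm$. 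Pairing each hardness statement with the matching upper bound from the previous paragraph yields the stated completeness results.

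I do not expect a genuine obstacle here: all of the real combinatorics is buried in Lemma~\ref{lem-HardPi2} through Lemma~\ref{lem-HardOmegaKSigma2K1} (together with Lemma~\ref{lem-ArithHardHelper}). The two points that do need attention are (i) keeping the membership classifications uniform in the codes of $s$ and $t$, so that the alternating blocks of number quantifiers compose to precisely the indicated level, and (ii) the exceptional status of $\sim_0$, which is $\Delta^0_2$ rather than $\Pi^0_1$; this is why the $\Sigma^0_{2k+1}$-completeness clause is restricted to $k > 0$ and why the membership induction must be seeded at $\sim_1$ rather than $\sim_0$.
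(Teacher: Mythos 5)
Your proposal is correct and follows essentially the same route as the paper: membership is the arithmetical fragment of Lemma~\ref{lem-HypDefF} (spelled out here as an explicit induction on $k$), and hardness comes from Lemma~\ref{lem-HardOmegaKSigma2K1}, Lemma~\ref{lem-HardOmegaK1Pi2K2}, and iteration of Lemma~\ref{lem-ReduceBy1}, exactly as in the paper's proof.
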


\begin{proof}
It is straightforward to check that $\sim_{\omega k}$ is $\Sigma^0_{2k + 1}$-definable for every $k > 0$ and that $\sim_{\omega k + n}$ is $\Pi^0_{2k + 2}$-definable for every $k$ and every $n > 0$.  See also Lemma~\ref{lem-HypDefF} below.  The relation $\sim_{\omega k}$ is $\Sigma^0_{2k + 1}$-hard for every $k > 0$ by Lemma~\ref{lem-HardOmegaKSigma2K1}.  The relation $\sim_{\omega k + 1}$ is $\Pi^0_{2k + 2}$-hard for every $k$ by Lemma~\ref{lem-HardOmegaK1Pi2K2}.  For $n > 0$, we have that ${\sim_{\omega k + n}} \equivm {\sim_{\omega k + 1}}$ by repeated applications of Lemma~\ref{lem-ReduceBy1}, so $\sim_{\omega k + n}$ is also $\Pi^0_{2k + 2}$-hard for every $k$ and every $n > 0$.
\end{proof}

\section{Hyperarithmetical complexity in $\K_1$} \label{sec:hyp}

In this section, we continue working with $\K_1$ and show that the complexity pattern described at the beginning of Section~\ref{sec:arithmetical} extends to the whole hyperarithmetical hierarchy.  To describe the pattern, we introduce two functions $F$ and $G$ on the ordinals.

\begin{definition}
Define functions $F$ and $G$ on the ordinals by
\begin{align*}
F(0) &= 0 & \\ 
F(\alpha + d + 1) &= F(\alpha) + 1 & &\text{if $d < \omega$ and either $\alpha = 0$ or $\alpha$ is a limit}\\
F(\alpha) &= \lim_{\beta < \alpha}(F(\beta) + 1)& &\text{if $\alpha$ is a limit}.\\
\\
G(0) &= 0 & \\
G(\alpha + 2d + 1) &= G(\alpha + 2d) + 1 & &\text{if $d < \omega$ and either $\alpha = 0$ or $\alpha$ is a limit}\\
G(\alpha + 2d + 2) &= G(\alpha + 2d + 1) + \omega & &\text{if $d < \omega$ and either $\alpha = 0$ or $\alpha$ is a limit}\\
G(\alpha) &= \lim_{\beta < \alpha}G(\beta) & &\text{if $\alpha$ is a limit}.\\
\end{align*}
\end{definition}

Ultimately, we show that the $\sim_\alpha$ relation is $\Pi^0_{1 + F(\alpha)}$-complete for every computable successor ordinal $\alpha \geq 1$ and is $\Sigma^0_{1+F(\alpha)}$-complete for every computable limit ordinal $\alpha$.  For ordinals $\alpha < \omega^2$, we calculate
\begin{align*}
1 + F(1) &= 2 \\
1 + F(\omega) &= 3 \\
1 + F(\omega + n) &= 4 \text{ for every $n  > 0$} \\
1 + F(\omega 2) &= 5 \\
&\vdots \\
1 + F(\omega k) &= 2k+1 \text{ for every $k > 0$} \\
1 + F(\omega k + n) &= 2k+2 \text{ for every $k$ and $n > 0$}.
\end{align*}
So Theorem~\ref{thm:ArithComplexity} already gives the desired result for ordinals $\alpha < \omega^2$.

We collect some helpful facts about $F$ and $G$.  First, it is easy to check that if $\beta < \alpha$, then $F(\beta) \leq F(\alpha)$ and $G(\beta) < G(\alpha)$.  Second, the range of $G$ consists of $0$, $1$, all limit ordinals, and all successors of limit ordinals.  Third, $F(G(\alpha)) = \alpha$ for every ordinal $\alpha$.

\begin{lemma}\label{lem-RanG}
The range of $G$ consists of all ordinals of the form $\gamma$ and $\gamma + 1$, where $\gamma$ is either $0$ or a limit ordinal.
\end{lemma}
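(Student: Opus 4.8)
The plan is to prove the characterization of $\ran(G)$ by a direct transfinite induction on the ordinal $\alpha$, tracking what $G(\alpha)$ is and showing that every target ordinal is hit. The statement claims $\ran(G) = \{\gamma : \gamma = 0 \text{ or } \gamma \text{ is a limit}\} \cup \{\gamma + 1 : \gamma = 0 \text{ or } \gamma \text{ is a limit}\}$; note this set is exactly $\{0, 1\} \cup \{\text{limits}\} \cup \{\lambda + 1 : \lambda \text{ a limit}\}$, matching the ``second fact'' stated just before the lemma.

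First I would establish the inclusion $\ran(G) \subseteq \{0,1\}\cup\{\text{limits}\}\cup\{\text{successors of limits}\}$. Using the recursion scheme for $G$, every ordinal is uniquely of the form $\alpha$ (a limit or $0$) or $\alpha + 2d + 1$ or $\alpha + 2d + 2$ with $d < \omega$ and $\alpha$ a limit or $0$. I would show by induction that $G(\alpha + 2d + 1) = G(\alpha) + (\text{something}) + 1$ is a successor of a limit (since $G(\alpha)$ is a limit or $0$ by the induction hypothesis, and adding a finite amount then $\cdot\omega$'s worth of limits in between makes the relevant predecessor a limit — more precisely $G(\alpha + 2d) = G(\alpha) + \omega d$ is a limit when $G(\alpha)$ is a limit, or equals $\omega d$ which is a limit when $d>0$, or is $0$ when $d = 0$), so $G(\alpha+2d+1) = G(\alpha+2d)+1$ is either $1$ or a successor-of-a-limit; and $G(\alpha+2d+2) = G(\alpha+2d+1)+\omega$ is a limit; and $G(\lambda) = \lim_{\beta<\lambda}G(\beta)$ is a limit since $G$ is strictly increasing (the ``first fact'') and $\lambda$ is a limit. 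The base case $G(0) = 0$ is immediate.

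For the reverse inclusion, I would show every element of the target set is in $\ran(G)$. Clearly $0 = G(0)$ and $1 = G(1)$. For a limit ordinal $\gamma$: I would argue that $\gamma$ is hit, by a secondary induction or a direct cofinality argument — since $G$ is strictly increasing and continuous at limits, and $G(\alpha+2)=G(\alpha+1)+\omega$ produces limits cofinally, one can pin down a least $\alpha$ with $G(\alpha) \geq \gamma$ and check $G(\alpha) = \gamma$ using continuity. For $\gamma + 1$ with $\gamma$ a limit: once $\gamma = G(\alpha)$ with $\alpha$ of the form $\beta + 2d + 2$ or a limit (the cases where $G$ outputs a limit), then $G(\alpha + 1) = G(\alpha) + 1 = \gamma + 1$, using that $\alpha + 1$ is then of the form (limit or $0$)$\, + 2d' + 1$. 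The bookkeeping about which $\alpha$ map to limits versus successors-of-limits, done carefully in the first half, makes this step clean.

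The main obstacle I anticipate is the surjectivity onto limit ordinals: one must show no limit ordinal is ``skipped.'' The jumps in $G$ are by $+1$ (at odd steps) and by $+\omega$ (at even steps) and by continuous limits, so the image could a priori leave gaps. The key observation to push through is that $G(\alpha+1) = G(\alpha) + 1$ and $G(\alpha+2) = G(\alpha)+\omega+1$ (when $\alpha$ is $0$ or a limit), so between consecutive ``limit-valued'' outputs $G(\alpha)$ and $G(\alpha+2)$ the only ordinals skipped are $G(\alpha)+1$ (which is a successor-of-limit, itself in the range as $G(\alpha+1)$) and the non-limit ordinals strictly between $G(\alpha)+1$ and $G(\alpha)+\omega$, none of which are limits; combined with continuity at limit stages this shows no limit is omitted. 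I would phrase this as: for every limit $\gamma$, let $\alpha$ be least with $G(\alpha) \geq \gamma$; $\alpha$ cannot be a successor (else $G(\alpha) = G(\alpha^-)+1$ or $G(\alpha^-)+\omega$ with $G(\alpha^-) < \gamma$ forcing $G(\alpha^-) < \gamma \leq G(\alpha)$, and a short case analysis using $\gamma$ limit gives $\gamma = G(\alpha^-)$ or a contradiction), so $\alpha$ is a limit and $G(\alpha) = \sup_{\beta<\alpha}G(\beta) = \gamma$ by minimality.
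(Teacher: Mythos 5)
Your argument is correct in outline but takes a genuinely different route from the paper for the surjectivity-onto-limits step, and one small claim along the way is off. The paper argues by contradiction: it picks the \emph{least limit ordinal} $\gamma$ not in $\ran(G)$ and splits on whether there is a maximum limit ordinal $\delta < \gamma$. If so, $\gamma = \delta + \omega$ and $\delta = G(\alpha)$ for some even $\alpha$, so $\gamma = G(\alpha + 2)$; if not, $\gamma$ is a limit of limits, and taking $\alpha = \sup\{\alpha_\delta : G(\alpha_\delta) = \delta, \delta < \gamma \text{ limit}\}$ gives $G(\alpha) = \gamma$ by continuity. You instead pick the \emph{least preimage} $\alpha$ with $G(\alpha) \geq \gamma$ and argue directly. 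Both are clean and of comparable length; your version avoids the explicit contradiction and the sub-case on whether $\gamma$ has a largest limit predecessor, folding both into the continuity/minimality argument, which is arguably tidier.

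However, your parenthetical ``$\alpha$ cannot be a successor'' is not quite right. If $\alpha = \alpha^- + 1$ is an \emph{even} successor, then $G(\alpha) = G(\alpha^-) + \omega$ with $G(\alpha^-)$ a successor-of-a-limit, say $G(\alpha^-) = \delta + 1$; the only limit ordinal in the interval $(\delta+1,\, \delta + \omega]$ is $\delta + \omega = G(\alpha)$ itself, so the case analysis forces $\gamma = G(\alpha)$, not $\gamma = G(\alpha^-)$ (which is impossible since $G(\alpha^-) < \gamma$), and not a contradiction. So $\alpha$ \emph{can} be a successor; the correct trichotomy is: $\alpha$ odd successor gives a contradiction ($\gamma$ would be a successor), while $\alpha$ even successor and $\alpha$ limit both directly give $\gamma = G(\alpha)$. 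With that repaired, your proof goes through, and the ``$\gamma + 1$'' part and the forward inclusion (range $\subseteq \{0,1\} \cup \text{limits} \cup \text{successors of limits}$) are fine and essentially what the paper does implicitly.
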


\begin{proof}
Clearly $G(0) = 0$ and $G(1) = 1$.  A straightforward argument shows that $G(\alpha)$ is a limit ordinal when $\alpha > 0$ is even and that $G(\alpha)$ is the successor of a limit ordinal when $\alpha > 1$ is odd.

To see that the range of $G$ contains all limit ordinals, suppose for a contradiction that some limit ordinal is missing, and let $\gamma$ be the least limit ordinal not in the range of $G$.  First suppose that there is a maximum limit ordinal $\delta < \gamma$, in which case it must be that $\gamma = \delta + \omega$.  As $\delta < \gamma$, there is an $\alpha$ such that $G(\alpha) = \delta$, and $\alpha$ must be even by the discussion above.  Thus $G(\alpha + 2) = \delta + \omega = \gamma$, which is a contradiction.  Now suppose instead that there is no maximum limit ordinal $\delta < \gamma$.  Then $\gamma = \sup\{\delta < \gamma : \text{$\delta$ is a limit}\}$.  For each limit ordinal $\delta < \gamma$, let $\alpha_\delta$ be such that $G(\alpha_\delta) = \delta$, and let $\alpha = \sup\{\alpha_\delta : \text{$\delta < \gamma$ is a limit}\}$.  Then $G(\alpha) = \sup\{G(\alpha_\delta) : \text{$\delta < \gamma$ is a limit}\} = \gamma$, which is again a contradiction.  So the range of $G$ contains all limit ordinals.

Finally, consider $\gamma + 1$, where $\gamma$ is a limit ordinal.  The range of $G$ contains all limit ordinals, so there is an even ordinal $\alpha$ with $G(\alpha) = \gamma$.  Then $G(\alpha + 1) = \gamma + 1$.  So the range of $G$ contains all successors of limit ordinals as well.
\end{proof}

\begin{lemma}\label{lem-FGInv}
$F(G(\alpha)) = \alpha$ for all ordinals $\alpha$.
\end{lemma}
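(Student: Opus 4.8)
The plan is to prove $F(G(\alpha)) = \alpha$ by transfinite induction on $\alpha$, splitting into cases according to the four defining clauses of $G$. Throughout I would use that $F$ is non-decreasing and $G$ is strictly increasing (both recorded in the discussion preceding the lemma), that $F$ of a limit ordinal is therefore the supremum of the values $F(\beta) + 1$ for $\beta$ below it, and the observation contained in the proof of Lemma~\ref{lem-RanG} that $G(\beta)$ is $0$ or a limit ordinal whenever $\beta$ is an \emph{even} ordinal $\mu + 2d$ (with $\mu$ equal to $0$ or a limit and $d < \omega$), while $G(\beta)$ is the successor of such an ordinal whenever $\beta$ is \emph{odd}. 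The base case is immediate: $F(G(0)) = F(0) = 0$.

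For $\alpha$ a limit, $G(\alpha) = \sup_{\beta < \alpha} G(\beta)$ is a limit ordinal since $G$ is strictly increasing, so $F(G(\alpha)) = \sup_{\gamma < G(\alpha)}(F(\gamma) + 1)$. For the lower bound, every $\beta < \alpha$ satisfies $G(\beta) < G(\alpha)$, hence $F(G(\beta)) + 1 = \beta + 1 \leq F(G(\alpha))$ by the induction hypothesis, and taking the supremum over $\beta < \alpha$ gives $\alpha \leq F(G(\alpha))$. For the upper bound, any $\gamma < G(\alpha) = \sup_{\beta < \alpha} G(\beta)$ lies below $G(\beta)$ for some $\beta < \alpha$, so $F(\gamma) + 1 \leq F(G(\beta)) + 1 = \beta + 1 < \alpha$; thus $F(G(\alpha)) \leq \alpha$.

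For $\alpha$ a successor, write $\alpha = \lambda + m$ with $\lambda$ equal to $0$ or a limit ordinal and $1 \leq m < \omega$. If $m = 2d + 1$ is odd, then $G(\alpha) = G(\lambda + 2d) + 1$, and $G(\lambda + 2d)$ is $0$ or a limit ordinal (it is $G(0) = 0$ when $d = 0$ and $\lambda = 0$; it is $G(\lambda)$, a limit, when $d = 0$ and $\lambda$ is a limit; and it is $G(\lambda + 2d - 1) + \omega$, a limit, when $d \geq 1$), so the successor clause of $F$ applies and gives $F(G(\alpha)) = F(G(\lambda + 2d)) + 1 = (\lambda + 2d) + 1 = \alpha$ by the induction hypothesis. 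If $m = 2d + 2$ is even, then $G(\alpha) = G(\lambda + 2d + 1) + \omega$; writing $\delta = G(\lambda + 2d + 1)$, the odd case just handled shows $\delta = \rho + 1$ for some $\rho$ equal to $0$ or a limit, and $F(\delta) = F(\rho) + 1 = \lambda + 2d + 1$ by the induction hypothesis, so $F(\rho) = \lambda + 2d$. By the successor clause of $F$, $F(\delta + n) = F(\rho + n + 1) = F(\rho) + 1$ for every $n < \omega$, so $F$ takes the constant value $F(\rho) + 1$ on $[\delta, \delta + \omega)$ and, being non-decreasing, no larger value below $\delta$; since $G(\alpha) = \delta + \omega$ is a limit, $F(G(\alpha)) = \sup_{\gamma < \delta + \omega}(F(\gamma) + 1) = F(\rho) + 2 = \alpha$.

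The clause-matching bookkeeping is routine; the one place needing care is the even-successor case, where I must pin down the exact value of $F$ on the block $\{\delta, \delta + 1, \delta + 2, \dots\}$ before passing to the limit at $\delta + \omega$. This is precisely the point at which the $+\,\omega$ in the definition of $G$ compensates for $F$ jumping by $1$ at the successor of a limit and then stabilizing, and I expect verifying this bookkeeping, together with the auxiliary parity fact about $G$ borrowed from the proof of Lemma~\ref{lem-RanG}, to be the main (modest) obstacle.
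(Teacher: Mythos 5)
Your proposal is correct and follows essentially the same strategy as the paper's proof: transfinite induction split on the four defining clauses of $G$, using the parity fact that $G(\mu+2d)$ is $0$ or a limit so that the successor clause of $F$ applies, and computing $F$ at the limit $G(\mu+2d+1)+\omega$ via the cofinal sequence in the even-successor case. The explicit $\delta,\rho$ bookkeeping you set up in the even-successor case is just a slightly more verbose phrasing of the paper's chain of equalities and does not change the argument.
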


\begin{proof}
Proceed by transfinite induction on $\alpha$.  Clearly $F(G(0)) = 0$.  Now suppose that $F(G(\delta)) = \delta$ for all $\delta < \alpha$.

We have two successor cases, depending on whether $\alpha$ is even or odd.  First, suppose that $\alpha = \beta + 2d + 1$, where $d < \omega$ and $\beta$ is either $0$ or a limit.  Then
\begin{align*}
F(G(\alpha)) &= F(G(\beta + 2d + 1)) & &\\
&= F(G(\beta + 2d) + 1) & &\text{by the definition of $G$}\\
&= F(G(\beta + 2d)) + 1 & &\text{by the definition of $F$, as $G(\beta + 2d)$ is $0$ or a limit}\\
&=\beta + 2d + 1 & &\text{by the induction hypothesis}\\
&=\alpha. & &\\
\intertext{Second, suppose that $\alpha = \beta + 2d + 2$, where $d < \omega$ and $\beta$ is either $0$ or a limit.  Then}
F(G(\alpha)) &= F(G(\beta + 2d + 2)) & &\\
&= F(G(\beta + 2d + 1) + \omega) & &\text{by the definition of $G$}\\
&= \lim_{n < \omega}\Bigl[F(G(\beta + 2d + 1) + n) + 1\Bigr] & &\text{by the definition of $F$}\\
&= \lim_{n < \omega}\Bigl[F(G(\beta + 2d) + 1 + n) + 1\Bigr] & &\text{by the definition of $G$}\\
&= \lim_{n < \omega}\Bigl[F(G(\beta + 2d)) + 1 + 1\Bigr] & &\text{by the definition of $F$, as $G(\beta + 2d)$ is $0$ or a limit}\\
&= \lim_{n < \omega}\Bigl[\beta + 2d + 2 \Bigr] & &\text{by the induction hypothesis}\\
&= \beta + 2d + 2 & &\\
&= \alpha. & &
\end{align*}
Finally, suppose that $\alpha$ is a limit.  Then $G(\alpha)$ is a limit as well, and
\begin{align*}
F(G(\alpha)) &= \lim_{\beta < G(\alpha)}(F(\beta) + 1).
\end{align*}
If $\beta < G(\alpha)$, then $\beta < G(\delta)$ for some $\delta < \alpha$ because $G(\alpha) = \lim_{\delta < \alpha}G(\delta)$.  Then $F(\beta) + 1 \leq F(G(\delta)) + 1 = \delta + 1 < \alpha$ by the induction hypothesis and the fact that $\alpha$ is a limit.  Thus $F(\beta) + 1 < \alpha$ for all $\beta < G(\alpha)$, so $F(G(\alpha)) \leq \alpha$.  On the other hand, if $\beta < \alpha$, then $G(\beta) < G(\alpha)$, so $F(G(\alpha)) \geq F(G(\beta)) + 1 = \beta + 1 > \beta$ by the induction hypothesis.  Thus $F(G(\alpha)) \geq \alpha$.  So $F(G(\alpha)) = \alpha$.
\end{proof}

The next two lemmas show that $\sim_\alpha$ is $\Pi^0_{1 + F(\alpha)}$-definable when $\alpha \geq 1$ is a computable successor ordinal and is $\Sigma^0_{1+F(\alpha)}$-definable when $\alpha$ is a computable limit ordinal.

\begin{lemma}\label{lem-HypDefG}
Let $\alpha \geq 1$ be a computable ordinal.
\begin{itemize}
\item If $\alpha$ is even, then $\sim_{G(\alpha)}$ is $\Sigma^0_{1+\alpha}$-definable.
\item If $\alpha$ is odd, then $\sim_{G(\alpha)}$ is $\Pi^0_{1+\alpha}$-definable.
\end{itemize}
\end{lemma}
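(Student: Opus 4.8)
\emph{Proof proposal.}  The plan is to run the argument as an effective transfinite recursion on a notation $a \in \O$ for $\alpha$, proving the uniform version: from any $a \in \O$ with $|a| = \alpha \geq 1$ one can compute an index for a computable infinitary formula defining $\sim_{G(\alpha)}$, this formula being $\Sigma^0_{1+\alpha}$ if $\alpha$ is even and $\Pi^0_{1+\alpha}$ if $\alpha$ is odd.  The notation system lets us decide the parity of $\alpha$ and compute a notation for $1+\alpha$ (which simply equals $\alpha$ once $\alpha \geq \omega$, and is handled by hand in the finite case), so the target class can be named effectively; uniformity is needed only in the limit step.  For the base case $\alpha = 1$ we have $G(1) = 1$, and $s \sim_1 t$ unfolds to $\fa x\,(sx \simeq tx)$.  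The matrix $sx \simeq tx$ is $\bl\ex u\,(sx\da = u \andd tx\da = u)\br \orr (sx\ua \andd tx\ua)$, a disjunction of a $\Sigma^0_1$ and a $\Pi^0_1$ formula and hence $\Pi^0_2$; prefixing the number quantifier keeps it $\Pi^0_2 = \Pi^0_{1+1}$, as the odd case requires.

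For the successor step I split on the parity of the predecessor $\alpha$.  If $\alpha$ is odd, then $\alpha+1$ is even and, unwinding the definition of $G$, $G(\alpha+1) = G(\alpha) + \omega$ is a limit, so by the iterate identity recorded after Definition~\ref{hierarchy},
\[
s \sim_{G(\alpha+1)} t \Biimp \ex n\; s \sim_{G(\alpha)+n} t \Biimp \ex n\; \fa x_0, \dots, x_{n-1}\,\bl s x_0 \cdots x_{n-1} \sim_{G(\alpha)} t x_0 \cdots x_{n-1}\br.
\]
By the recursion hypothesis $\sim_{G(\alpha)}$ is $\Pi^0_{1+\alpha}$; a finite block of number quantifiers leaves the level unchanged, and the outer $\ex n$ yields a $\Sigma^0_{(1+\alpha)+1} = \Sigma^0_{1+(\alpha+1)}$ formula, matching the even case.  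If $\alpha$ is even (so $\alpha \geq 2$), then $\alpha+1$ is odd, $G(\alpha+1) = G(\alpha)+1$, and $s \sim_{G(\alpha+1)} t \Biimp \fa x\; sx \sim_{G(\alpha)} tx$; since $\sim_{G(\alpha)}$ is $\Sigma^0_{1+\alpha}$, the universal quantifier gives $\Pi^0_{(1+\alpha)+1} = \Pi^0_{1+(\alpha+1)}$, matching the odd case.  In both subcases the new index is computed from the old one together with a notation for $1+(\alpha+1)$.

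For the limit step, $\alpha$ is a limit (hence even) and $G(\alpha) = \lim_{\beta<\alpha} G(\beta)$ is itself a limit ordinal, so $s \sim_{G(\alpha)} t \Biimp \ex \beta < \alpha\; s \sim_{G(\beta)} t$.  Extracting from the limit notation $a$ a computable strictly increasing sequence $1 \leq \beta_0 < \beta_1 < \cdots$ with supremum $\alpha$, and using that $G$ and the $\sim_\gamma$ are increasing (Theorem~\ref{thm:basic} and the monotonicity of $G$ noted above), this is equivalent to $\ex k\; s \sim_{G(\beta_k+1)} t$: passing from $\beta_k$ to $\beta_k + 1$ costs nothing because $\sim_{G(\beta_k)} \subseteq \sim_{G(\beta_k+1)} \subseteq \sim_{G(\alpha)}$ (here $\beta_k+1 < \alpha$ since $\alpha$ is a limit), but it arranges that each $\beta_k+1$ is odd, so the recursion hypothesis presents $\sim_{G(\beta_k+1)}$ as a $\Pi^0_{1+\beta_k+1}$ formula with $1+\beta_k+1 < 1+\alpha$.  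Thus $\sim_{G(\alpha)}$ is a computable disjunction of $\Pi^0$ formulas of rank below $1+\alpha$, i.e.\ $\Sigma^0_{1+\alpha}$ (note $1+\alpha = \alpha$, so $a$ itself names the class), matching the even case; the disjunction is presented effectively by composing the recursion hypothesis, applied to notations for the $\beta_k+1$ computed from $a$, with the relevant notations.

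The main obstacle is precisely this limit step: one needs a genuinely \emph{uniform} supply of indices for the relations $\sim_{G(\beta)}$, which is why the whole argument must be an effective transfinite recursion rather than a plain induction, and one needs the disjuncts to be $\Pi^0$ formulas of ordinal rank strictly below $1+\alpha$ --- which is exactly what the parity shift $\beta_k \mapsto \beta_k+1$ secures, using that $\alpha$ is a limit.  Everything else is routine unwinding of the clauses defining $G$ and $\sim_\alpha$, together with the standard fact that a finite block of number quantifiers does not change the level of an infinitary $\Sigma^0_{\geq 1}$ or $\Pi^0_{\geq 1}$ formula.
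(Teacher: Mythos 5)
Your proof takes essentially the same route as the paper: effective transfinite recursion on a notation for $\alpha$, with the same base case, the same two successor subcases distinguished by parity (producing a $\forall x$ in the odd target case and a computable disjunction over finite blocks of quantifiers in the even target case, using $G(\alpha+1)=G(\alpha)+\omega$), and a limit case via a computable cofinal sequence whose terms are adjusted to be odd so that the recursion hypothesis supplies $\Pi^0$ disjuncts of rank strictly below $1+\alpha$.

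One small slip in the limit step: you write that passing from $\beta_k$ to $\beta_k+1$ ``arranges that each $\beta_k+1$ is odd,'' but this holds only if $\beta_k$ is even; a cofinal sequence extracted from a limit notation may contain ordinals of either parity. This is trivially repaired (replace $\beta_k$ by $\beta_k$ or $\beta_k+1$ according to parity, as the paper does by ``adding to $\beta_n$ where necessary''), and does not affect the structure of the argument; the containments $\sim_{G(\beta_k)}\subseteq\sim_{G(\beta_k')}\subseteq\sim_{G(\alpha)}$ for $\beta_k\leq\beta_k'<\alpha$ that you invoke still justify the replacement.
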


\begin{proof}
By effective transfinite recursion on $\alpha$, we uniformly produce a code for a formula $\varphi(u,v)$ with the following properties.
\begin{itemize}
\item If $\alpha$ is a even, then $\varphi(u,v)$ is $\Sigma^0_{1 + \alpha}$.
\item If $\alpha$ is a odd, then $\varphi(u,v)$ is $\Pi^0_{1 + \alpha}$.
\item For all closed terms $s$ and $t$, $s \sim_{G(\alpha)} t$ if and only if $\varphi(s,t)$.
\end{itemize}

For the base case $\alpha = 1$, we have that $1$ is odd, that $G(1) = 1$, and that $\sim_1$ is defined by the fixed $\Pi^0_2$ formula $\varphi(u, v) \equiv \forall x (u x \simeq v x)$.

Suppose that $\alpha = \beta + 1$ is a successor.  There are two cases, depending on whether $\beta$ even or odd.  The coding of computable ordinals allows us to distinguish between the cases.

First, suppose that $\beta$ is even.  In this case, $\alpha$ is odd and $G(\alpha) = G(\beta + 1) = G(\beta) + 1$.  By effective transfinite recursion, we can compute a (code for a) $\Sigma^0_{1 + \beta}$ formula $\psi(u, v)$ such that $s \sim_{G(\beta)} t \Biimp \psi(s,t)$ for all closed terms $s$ and $t$.  Let $\varphi(u, v)$ be the $\Pi^0_{1 + \beta + 1}$ formula $\varphi(u, v) \equiv \forall x \; \psi(u x, v x)$.  Then $s \sim_{G(\beta) + 1} t \Biimp \varphi(s,t)$ for all closed terms $s$ and $t$.  Thus $\varphi(u, v)$ is the desired formula because $\alpha = \beta + 1$ is odd and and $G(\alpha) = G(\beta) + 1$.

Second, suppose that $\beta$ is odd.  In this case, $\alpha$ is even and $G(\alpha) = G(\beta + 1) = G(\beta) + \omega$.  By effective transfinite recursion, we can compute a (code for a) $\Pi^0_{1 + \beta}$ formula $\psi(u, v)$ such that $s \sim_{G(\beta)} t \Biimp \psi(s,t)$ for all closed terms $s$ and $t$.  From the code for $\psi$, we can uniformly compute codes for $\Pi^0_{1 + \beta}$ formulas $\psi_n(u, v)$, where $\psi_n(u, v) \equiv \forall x_0, \dots, x_{n-1} \; \psi(u x_0 \cdots x_{n-1}, v x_0 \cdots x_{n-1})$ for each $n < \omega$.  Note that $s \sim_{G(\beta) + n} t \Biimp \psi_n(s,t)$ for all closed terms $s$ and $t$ and all $n < \omega$.  We can then compute a code for the $\Sigma^0_{1 + \beta + 1}$ formula $\varphi(u, v) \equiv \bigvee_{n \in \omega} \psi_n(u, v)$.  For closed terms $s$ and $t$, $\varphi(s,t)$ holds if and only if $\psi_n(s,t)$ holds for some $n < \omega$ if and only if $s \sim_{G(\beta) + n} t$ for some $n < \omega$ if and only if $s \sim_{G(\beta) + \omega} t$.  Thus $\varphi(u, v)$ is the desired formula because $\alpha = \beta + 1$ is even and $G(\alpha) = G(\beta) + \omega$.

Now suppose that $\alpha$ is a limit, and let $\beta_0 < \beta_1 < \cdots$ be a computable sequence of ordinals converging to $\alpha$.  We may assume that each $\beta_n$ is odd by adding to $\beta_n$ where necessary.  Note that $\alpha$ is even and that $G(\alpha) = \lim_{n < \omega}G(\beta_n)$ is a limit ordinal.  By effective transfinite recursion, for each $n < \omega$, we can uniformly compute (a code for) a $\Pi^0_{1 + \beta_n}$ formula $\psi_n(u, v)$ such that $s \sim_{G(\beta_n)} t \Biimp \psi_n(s,t)$ for all closed terms $s$ and $t$.  We can thus compute a code for the $\Sigma^0_{1 + \alpha}$ formula $\varphi(u, v) \equiv \bigvee_{n \in \omega} \psi_n(u, v)$.  For closed terms $s$ and $t$, $\varphi(s,t)$ holds if and only if $\psi_n(s,t)$ holds for some $n < \omega$ if and only if $s \sim_{G(\beta_n)} t$ for some $n < \omega$ if and only if $s \sim_{G(\alpha)} t$.  Thus $\varphi(u, v)$ is the desired formula.
\end{proof}

\begin{lemma}\label{lem-HypDefF}
Let $\alpha \geq 1$ be a computable ordinal.
\begin{itemize}
\item If $\alpha$ is a successor, then $\sim_\alpha$ is $\Pi^0_{1 + F(\alpha)}$-definable.
\item If $\alpha$ is a limit, then $\sim_\alpha$ is $\Sigma^0_{1 + F(\alpha)}$-definable.
\end{itemize}
\end{lemma}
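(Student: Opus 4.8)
The plan is to derive this from Lemma~\ref{lem-HypDefG} together with the arithmetic lemmas for $F$ and $G$, rather than to run a fresh transfinite recursion. The guiding observation is that, by Lemma~\ref{lem-RanG}, the limit ordinals and the successors of limit ordinals are exactly the nontrivial values of $G$ (these are the stages at which a genuinely new quantifier block appears), while an arbitrary successor ordinal lies only finitely far above such a stage, and closing that gap costs only finitely many universal number quantifiers, which do not change the (successor) level of a formula.

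I would first handle the limit case. Let $\alpha \geq 1$ be a computable limit ordinal. By Lemma~\ref{lem-RanG} we may write $\alpha = G(\beta)$, and since $G$ is increasing, $\beta \leq G(\beta) = \alpha$, so $\beta$ is computable; moreover $\beta = F(G(\beta)) = F(\alpha)$ by Lemma~\ref{lem-FGInv}, so $\alpha = G(F(\alpha))$. By the description of $G$ recorded in the proof of Lemma~\ref{lem-RanG} (namely, $G$ fixes $0$ and $1$ and sends every odd ordinal $> 1$ to the successor of a limit), the fact that $\alpha$ is a limit forces $\beta = F(\alpha)$ to be a positive even ordinal. Applying Lemma~\ref{lem-HypDefG} to the computable even ordinal $\beta$ then gives that $\sim_{G(\beta)} = {\sim_\alpha}$ is $\Sigma^0_{1+\beta}$-definable, i.e.\ $\Sigma^0_{1+F(\alpha)}$-definable, as desired.

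I would then handle the successor case, using the limit case just proved. Let $\alpha \geq 1$ be a computable successor ordinal, and write $\alpha = \gamma + d + 1$ with $\gamma$ either $0$ or a limit ordinal and $d < \omega$ (the standard decomposition). The first step is to observe that $\sim_{\gamma + 1}$ is $\Pi^0_{1 + F(\gamma) + 1}$-definable: if $\gamma = 0$, this is because $\sim_1$ is defined by the $\Pi^0_2$ formula $\forall x\,(u x \simeq v x)$ and $1 + F(0) + 1 = 2$; if $\gamma$ is a limit, the limit case supplies a $\Sigma^0_{1 + F(\gamma)}$ formula $\psi(u,v)$ defining $\sim_\gamma$, and then $\forall x\,\psi(u x, v x)$, which is $\Pi^0_{1 + F(\gamma) + 1}$, defines $\sim_{\gamma + 1}$ since $s \sim_{\gamma + 1} t \Biimp \forall x\,(s x \sim_\gamma t x)$. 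The second step uses $\alpha = \gamma + d + 1 = (\gamma + 1) + d$ and the identity $s \sim_\alpha t \Biimp \forall x_0, \dots, x_{d-1}\,(s x_0 \cdots x_{d-1} \sim_{\gamma + 1} t x_0 \cdots x_{d-1})$: prefixing the $\Pi^0_{1 + F(\gamma) + 1}$ formula for $\sim_{\gamma + 1}$ by $d$ universal number quantifiers yields another $\Pi^0_{1 + F(\gamma) + 1}$ formula, and since $F(\alpha) = F(\gamma + d + 1) = F(\gamma) + 1$ by definition of $F$, this is precisely $\Pi^0_{1 + F(\alpha)}$.

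The routine ingredients are the ordinal arithmetic (checking $\gamma + d + 1 = (\gamma + 1) + d$, that $F(\gamma + d + 1) = F(\gamma) + 1$, and that the $G$-preimage of a limit is a positive even ordinal) and the observation that prepending universal number quantifiers does not raise the level of a computable infinitary $\Pi^0_\xi$ formula with $\xi \geq 1$. The point that needs genuine care is organizational: the successor case invokes the limit case at the smaller ordinal $\gamma$, so the two cases must be established in that order rather than as a single transfinite recursion on $\alpha$; indeed, a naive single recursion would have its limit clause overshoot the target level $1 + F(\alpha)$ by one at certain limit stages, and circumventing precisely that overshoot (by spreading the ordinals out via $G$ so that the approximants can be chosen with the right parity) is exactly what Lemma~\ref{lem-HypDefG} accomplishes. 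So the main thing to get right is reconciling the two cases and letting Lemma~\ref{lem-HypDefG} carry the weight at the limit stages; beyond that I anticipate no serious obstacle.
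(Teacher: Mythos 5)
Your proof is correct and rests on the same essential ingredients as the paper's: Lemma~\ref{lem-RanG}, Lemma~\ref{lem-FGInv}, and Lemma~\ref{lem-HypDefG}, together with the decomposition of a successor as $\gamma + d + 1$ with $\gamma$ zero or a limit. The one organizational difference is in the successor case: the paper invokes Lemma~\ref{lem-HypDefG} directly at the odd ordinal $\beta$ with $G(\beta) = \gamma + 1$, obtaining a $\Pi^0_{1+\beta}$ formula for $\sim_{\gamma+1}$ in one step, whereas you route through the limit case at $\gamma$ (so through the even half of Lemma~\ref{lem-HypDefG}), producing a $\Sigma^0_{1+F(\gamma)}$ formula $\psi$ for $\sim_\gamma$ and then taking $\forall x\,\psi(ux,vx)$. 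The two routes land at the same class, since if $G(\beta')=\gamma$ with $\beta'$ even then $\beta=\beta'+1$ and $1+\beta = 1+\beta'+1 = 1 + F(\gamma) + 1$; you are, in effect, reproving inline the odd-successor clause of Lemma~\ref{lem-HypDefG}. Both are fine. Your observations that $\beta \leq G(\beta) = \alpha$ keeps $\beta$ computable and that the $G$-preimage of a limit is a positive even ordinal are correct and in fact slightly more explicit than the paper. The parenthetical about a naive single recursion ``overshooting'' is a motivational aside and not load-bearing; the actual proof you give is sound.
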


\begin{proof}
Suppose that $\alpha \geq 1$ is a successor.  Write $\alpha$ as $\alpha = \gamma + 1 + d$, where $\gamma$ is either $0$ or a limit and $d < \omega$.  By Lemma~\ref{lem-RanG}, let $\beta$ be such that $G(\beta) = \gamma + 1$.  Then $\beta$ is odd, and ${\sim_{G(\beta)}} = {\sim_{\gamma + 1}}$ is $\Pi^0_{1+\beta}$-definable by Lemma~\ref{lem-HypDefG}.  Therefore ${\sim_\alpha} = {\sim_{\gamma + 1 + d}}$ is definable by a $\Pi^0_{1+\beta}$ formula equivalent to $\forall x_0, \dots, x_{d-1} \; (u x_0 \cdots x_{d-1} \sim_{\gamma + 1} v x_0 \cdots x_{d-1})$.  Furthermore, $F(\alpha) = F(\gamma + 1 + d) = F(\gamma + 1) = F(G(\beta)) = \beta$, where the last equality is by Lemma~\ref{lem-FGInv}.  So $\sim_\alpha$ is $\Pi^0_{1 + F(\alpha)}$-definable.

Suppose that $\alpha$ is a limit.  By Lemma~\ref{lem-RanG}, let $\beta$ be such that $G(\beta) = \alpha$.  Then $\beta$ is even, and ${\sim_{G(\beta)}} = {\sim_\alpha}$ is $\Sigma^0_{1+\beta}$-definable by Lemma~\ref{lem-HypDefG}.  We have that $F(\alpha) = F(G(\beta)) = \beta$ by Lemma~\ref{lem-FGInv}, so $\sim_\alpha$ is $\Sigma^0_{1 + F(\alpha)}$-definable.
\end{proof}

We now show that the $\sim_\alpha$ relations for computable $\alpha \geq 1$ are hard for the complexity classes indicated in Lemma~\ref{lem-HypDefF}.  The main tool for handling computable infinitary disjunctions is Lemma~\ref{lem-CombineOmega}.  Lemma~\ref{lem-Combine2} is a simpler version of Lemma~\ref{lem-CombineOmega} (and we think a more pleasing statement) that we prove first to illustrate the idea.

Using the recursion theorem, let $e_* \in \K_1$ be such that $e_* \cdot x = e_*$ for every $x$.  This $e_*$ remains fixed for the rest of this section.  Notice that it is hereditarily total.

For the rest of this section, $\la \cdot \ra \colon \omega^{<\omega} \imp \omega$ denotes a computable bijective encoding of finite strings, and $\la \cdot \ra_n \colon \omega^n \imp \omega$ denotes a uniformly computable sequence of bijective encodings of $n$-tuples for each $n > 0$.

\begin{lemma}\label{lem-Combine2}
There is a total computable function $f(a,b)$ such that for every hereditarily total $a$ and $b$ in $\K_1$ and every ordinal $\alpha$, $f(a,b) \sim_\alpha e_*$ if and only if either $a \sim_\alpha e_*$ or $b \sim_\alpha e_*$.
\end{lemma}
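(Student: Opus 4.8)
The plan is to construct $f(a,b)$ so that, as a combinatory object, it "runs $a$ and $b$ in parallel" and mimics $e_*$ exactly when at least one of them does. The key structural fact is that $e_*$ is hereditarily total and satisfies $e_* \sigma = e_*$ for every string $\sigma$, so $e_* \sim_\alpha e_*$ trivially; hence for a hereditarily total $h$ we have $h \sim_\alpha e_*$ if and only if, along every sequence $a_0, a_1, \dots$, the value $h a_0 \cdots a_n$ eventually stops differing from $e_*$ in the sense measured by $\sim_0$ — but more precisely we will want to track the ordinal directly. The cleanest approach is to arrange that $f(a,b)$ itself is hereditarily total whenever $a$ and $b$ are, and that for every $x \in \K_1$, $f(a,b) \cdot x = f(a \cdot x, b \cdot x)$, so that applying a string $\sigma$ gives $f(a,b)\sigma = f(a\sigma, b\sigma)$, with the additional base property that $f(a,b) = e_*$ as soon as $a = e_*$ or $b = e_*$.

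Concretely, I would use the recursion theorem exactly as in the proof of Theorem~\ref{thm-EquivComplexity} and Lemma~\ref{lem-ArithHardHelper}. Fix a total computable $g$ with $\Phi_{g(i,a,b)}(x) \simeq \Phi_i(a \cdot x, b \cdot x)$ (using that $\K_1$-application is partial computable), and by the recursion theorem let $i$ be an index with
\begin{align*}
\Phi_i(a,b) =
\begin{cases}
e_* & \text{if $a = e_*$ or $b = e_*$}\\
g(i,a,b) & \text{otherwise,}
\end{cases}
\end{align*}
which is total because $g$ is total and (by padding) we may take $g$ so that $e_* \notin \ran(g)$, making the two cases genuinely disjoint. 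Set $f(a,b) = \Phi_i(a,b)$. Then a one-line case split gives the \textbf{recurrence} $f(a,b) \cdot x = f(a \cdot x, b \cdot x)$ for all $x$ (in the first case $e_* x = e_* = f(e_* x, \dots)$ since $e_* x = e_*$; in the second case unwind $g$), and by induction on $|\sigma|$ we get $f(a,b)\sigma = f(a\sigma, b\sigma)$ for every string $\sigma$; in particular $f(a,b)$ is hereditarily total when $a,b$ are.

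It then remains to verify, by transfinite induction on $\alpha$, that for hereditarily total $a,b$, $f(a,b) \sim_\alpha e_*$ iff $a \sim_\alpha e_*$ or $b \sim_\alpha e_*$. For $\alpha = 0$: $f(a,b) \sim_0 e_*$ means $f(a,b) = e_*$, which by disjointness of the cases happens iff $a = e_*$ or $b = e_*$, i.e. $a \sim_0 e_*$ or $b \sim_0 e_*$. For the successor step $\alpha = \beta+1$: $f(a,b) \sim_{\beta+1} e_*$ iff for all $x$, $f(a,b)\cdot x \sim_\beta e_* x$, i.e. $f(a\cdot x, b\cdot x) \sim_\beta e_*$ (using $e_* x = e_*$), i.e. by the induction hypothesis, for all $x$ either $a\cdot x \sim_\beta e_*$ or $b\cdot x \sim_\beta e_*$; I must show this is equivalent to "$a \sim_{\beta+1} e_*$ or $b \sim_{\beta+1} e_*$", i.e. to "($\forall x\; a x \sim_\beta e_*$) or ($\forall x\; b x \sim_\beta e_*$)". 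The $\Leftarrow$ direction is immediate. For $\Rightarrow$: if $a \not\sim_{\beta+1} e_*$, pick $x_0$ with $a x_0 \not\sim_\beta e_*$; then for that $x_0$ we must have $b x_0 \sim_\beta e_*$, but this does not yet give $\forall x\; b x \sim_\beta e_*$ — \textbf{this is the main obstacle}, since the disjunction could be witnessed by $a$ for some inputs and by $b$ for others. The fix is the same one used earlier in the paper (e.g. in the treatment of $\sim_1$ on $\K_1$): by hereditary totality one can collapse to a single input. Specifically, if neither $\forall x\; a x \sim_\beta e_*$ nor $\forall x\; b x \sim_\beta e_*$ holds, pick $x_a$ with $a x_a \not\sim_\beta e_*$ and $x_b$ with $b x_b \not\sim_\beta e_*$; then I instead design $f$ to probe $a$ and $b$ on \emph{all} inputs before recursing — e.g. replace the recurrence by $f(a,b)\cdot\langle x,y\rangle_2 = f(a x, b y)$ (this still respects hereditary totality and still has $f(e_*,b) = e_*$ after suitable care), so that $f(a,b) \sim_{\beta+1} e_*$ iff for all $x,y$, $f(ax,by) \sim_\beta e_*$ iff (by IH) for all $x,y$, $ax \sim_\beta e_*$ or $by \sim_\beta e_*$, which by a standard pigeonhole now genuinely forces $\forall x\; ax\sim_\beta e_*$ or $\forall y\; by\sim_\beta e_*$. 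The limit case is then routine: $f(a,b)\sim_\alpha e_*$ iff $f(a,b)\sim_\gamma e_*$ for some $\gamma<\alpha$ iff ($a\sim_\gamma e_*$ or $b\sim_\gamma e_*$) for some $\gamma<\alpha$, which (since the ascending sequences of $\gamma$'s for $a$ and for $b$ can be taken below $\alpha$) is equivalent to $a\sim_\alpha e_*$ or $b\sim_\alpha e_*$.
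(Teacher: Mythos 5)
Your final construction — the recurrence $f(a,b)\cdot\langle u,v\rangle_2 = f(au,bv)$ with $f(a,b)=e_*$ iff $a=e_*$ or $b=e_*$ (enforced by keeping $e_*$ out of the range of $g$), followed by transfinite induction with a pigeonhole argument at successor stages — is exactly the paper's proof. Your initial single-input version $f(a,b)\cdot x = f(ax,bx)$ does indeed fail at the successor step for precisely the reason you identify, and the paired-input fix you land on is what the paper uses from the start.
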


\begin{proof}
Let $g(i,x,y)$ be a total computable function such that for all $i$, $x$, $y$, $u$, and $v$,
\begin{align*}
\Phi_{g(i,x,y)}(\la u, v \ra_2) \simeq \Phi_i(xu, yv).
\end{align*}
By padding, we may define $g$ so that $e_* \notin \ran(g)$.  Using the recursion theorem, let $i$ be such that
\begin{align*}
\Phi_i(x, y) =
\begin{cases}
e_* & \text{if $x = e_*$ or $y = e_*$}\\
g(i, x, y) & \text{otherwise}.
\end{cases}
\end{align*}
Note that $\Phi_i$ is total because $g$ is total.  Also note that $\Phi_i(x,y) = e_*$ if and only if $x = e_*$ or $y = e_*$ because $e_* \notin \ran(g)$.  Let $f$ be the function computed by $\Phi_i$.  This $f$ is the desired function.

\begin{claim*}
Consider any $x$, $y$, $u$, and $v$ in $\K_1$.  If $xu\da$ and $yv\da$, then $f(x,y) \cdot \la u, v \ra_2 = f(xu, yv)$.
\end{claim*}

\begin{proof}[Proof of Claim]
If $x = e_*$ or $y = e_*$, then $xu = e_*$ or $yv = e_*$, so $f(x, y) \cdot \la u, v \ra_2 = e_* = f(xu, yv)$.  If neither $x$ nor $y$ is $e_*$, then
\begin{align*}
f(x,y) \cdot \la u, v \ra_2 = \Phi_i(x, y) \cdot \la u, v \ra_2 = g(i, x, y) \cdot \la u, v \ra_2 = \Phi_{g(i,x,y)}(\la u, v \ra_2) = \Phi_i(xu, yv)  = f(xu, yv).
\end{align*}
\end{proof}

If $x$ and $y$ are hereditarily total, then so is $f(x,y)$ because in this case, by repeated application of the Claim,
\begin{align*}
f(x,y) \cdot \la u_0, v_0 \ra_2 \cdots \la u_{n-1}, v_{n-1} \ra_2 = f(x u_0 \cdots u_{n-1}, y v_0 \cdots v_{n-1}),
\end{align*}
which is defined for every sequence $\la u_0, v_0 \ra_2, \dots, \la u_{n-1}, v_{n-1} \ra_2$.

We show by transfinite induction on $\alpha$ that if $a$ and $b$ are hereditarily total elements of $\K_1$, then $f(a, b) \sim_\alpha e_*$ if and only if $a \sim_\alpha e_*$ or $b \sim_\alpha e_*$.

For the base case, first suppose that $a \sim_0 e_*$.  Then $f(a,b) = e_*$, so $f(a,b) \sim_0 e_*$.  Similarly, if $b \sim_0 e_*$, then $f(a, b) \sim_0 e_*$.  Conversely, suppose that $a \not\sim_0 e_*$ and that $b \not\sim_0 e_*$.  Then $a \neq e_0$ and $b \neq e_0$, in which case $f(a, b) \neq e_*$.  Thus $f(a, b) \not\sim_0 e_*$.

For the successor case, first suppose that $a \sim_{\alpha + 1} e_*$.  Then $au \sim_\alpha e_* \cdot u = e_*$ for every $u$.  For every $\la u, v \ra_2$, both $au$ and $bv$ are hereditarily total because $a$ and $b$ are.  Thus by the induction hypothesis and the Claim, $f(a,b) \cdot \la u, v \ra_2 = f(au, bv) \sim_\alpha e_*$ for every $\la u, v \ra_2$.  Therefore $f(a,b) \sim_{\alpha + 1} e_*$.  Similarly, if $b \sim_{\alpha + 1} e_*$, then $f(a,b) \sim_{\alpha + 1} e_*$.

Conversely, suppose that $a \not\sim_{\alpha + 1} e_*$ and $b \not\sim_{\alpha + 1} e_*$.  Then there are $u_0$ and $v_0$ such that $a u_0 \not\sim_\alpha e_*$ and $b v_0 \not\sim_\alpha e_*$.  Both $a u_0$ and $b v_0$ are hereditarily total because $a$ and $b$ are.  Thus $f(a, b) \cdot \la u_0, v_0 \ra_2 = f(a u_0, b v_0) \not\sim_\alpha e_*$ by the induction hypothesis and the Claim, so $f(a, b) \not\sim_{\alpha + 1} e_*$.

For the limit case, suppose that $\alpha$ is a limit ordinal.  Then $f(a, b) \sim_\alpha e_*$ if and only if there is a $\beta < \alpha$ such that $f(a, b) \sim_\beta e_*$.  By the induction hypothesis, this holds if and only if there is a $\beta < \alpha$ such that either $a \sim_\beta e_*$ or $b \sim_\beta e_*$, which in turn holds if and only if $a \sim_\alpha e_*$ or $b \sim_\alpha e_*$.
\end{proof}

\begin{lemma}\label{lem-CombineOmega}
There is a computable procedure which, given an index for a computable sequence $a_0, a_1, \dots$ of hereditarily total indices in $\K_1$, produces a hereditarily total index $w \in \K_1$ such that the following hold for every ordinal $\alpha$.
\begin{itemize}
\item If $a_n \sim_\alpha e_*$ for some $n$, then $w \sim_{\alpha + n + 1} e_*$.
\item If $a_n \not\sim_\alpha e_*$ for every $n$, then $w \not\sim_\alpha e_*$.
\end{itemize}
Formally, there is a total computable function $f(e)$ such that if $\Phi_e(n)\da = a_n$ is hereditarily total for every $n$, then $f(e)$ is the desired $w$.
\end{lemma}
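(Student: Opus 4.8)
The plan is to follow the proof of Lemma~\ref{lem-Combine2}, but to apply its combiner $f$ along an $\omega$-indexed backbone supplied by the recursion theorem. Fix a total computable map $e \mapsto e^+$ with $\Phi_{e^+}(n) \simeq \Phi_e(n+1)$, and write $e^{(m)}$ for its $m$-fold iterate, so that $\Phi_{e^{(m)}}(n) \simeq \Phi_e(n+m)$. Using the recursion theorem, together with a padding so that $e_*$ stays out of the range, I would produce a total computable $W$ with
\[
\Phi_{W(e)}(z) \simeq f\bl \Phi_e(0),\, W(e^+) \br \quad\text{for all } e, z,
\]
and with $W(e) \neq e_*$ for every $e$, where $f$ is the combiner of Lemma~\ref{lem-Combine2}. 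The function asked for in the statement is $e \mapsto W(e)$; when $\Phi_e(n)\da = a_n$ is hereditarily total for every $n$ we set $w = W(e)$. Morally, $w$ is $f$ applied to $a_0$ and to $W(e^+)$, which is $f$ applied to $a_1$ and to $W(e^{++})$, and so on, and passing from $e$ to $e^{(m)}$ deletes the first $m$ terms of the sequence; note that $\Phi_{W(e^{(m)})}$ is the constant function with value $f(a_m, W(e^{(m+1)}))$.

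The first and main task is to verify that $\Phi_{W(e^{(m)})}$ is hereditarily total for every $m$; this is both the hereditary totality of $w$ promised in the statement and what licenses the use of Lemma~\ref{lem-Combine2} below. The difficulty is that the definition of $W$ is ``infinitely nested,'' so a naive transfinite induction is not available. Instead I would show, by induction on the number $k$ of applications, that for every expression $E$ of the form $f\bl b_0, f(b_1, \dots, f(b_{j-1}, W(e^{(m)}))\dots) \br$ with $j \geq 0$ and all $b_i$ hereditarily total, and for all $z_1, \dots, z_k$, the value $E \cdot z_1 \cdots z_k$ is defined and is again an expression of this form. In the inductive step the main hypothesis disposes of the last $k - 1$ applications, so it suffices to analyze a single application $E \cdot z_1$, and this is handled by a secondary induction on the number $j$ of $f$-layers, using the Claim of Lemma~\ref{lem-Combine2} that $f(x,y) \cdot \la u, v \ra_2 = f(xu, yv)$, the identity $W(e^{(m)}) \cdot z = f(a_m, W(e^{(m+1)}))$, hereditary totality of the $a_j$ and the $b_i$, and the fact from Lemma~\ref{lem-Combine2} that $f(x,y)$ equals $e_*$ only when $x = e_*$ or $y = e_*$ (a layer that has collapsed to $e_*$ is harmless, since $e_* \cdot z = e_* = f(e_*, W(e^{(m)}))$). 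Applying this with $E = W(e) \cdot z_1 = f(a_0, W(e^+))$, and likewise with each $e^{(m)}$, gives the desired hereditary totality.

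With hereditary totality available, Lemma~\ref{lem-Combine2} applied to the hereditarily total pair $(a_m, W(e^{(m+1)}))$ gives, for every ordinal $\gamma$ and every $m$,
\[
W(e^{(m)}) \sim_{\gamma + 1} e_* \iff a_m \sim_\gamma e_* \ \orr\ W(e^{(m+1)}) \sim_\gamma e_* ,
\]
while $W(e^{(m)}) \not\sim_0 e_*$ because $W(e^{(m)}) \neq e_*$, and $W(e^{(m)}) \sim_\gamma e_*$ for limit $\gamma$ unfolds to $\ex \beta < \gamma \ W(e^{(m)}) \sim_\beta e_*$ by definition. From the displayed equivalence the two bullets follow. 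For the first, a straightforward induction on $n$ shows that $a_n \sim_\alpha e_*$ implies $W(e) \sim_{\alpha + n + 1} e_*$: the base case is the equivalence with $\gamma = \alpha$, and for the step one applies the inductive statement to $e^+$ (whose $n$-th term is $a_{n+1}$) to obtain $W(e^+) \sim_{\alpha + n + 1} e_*$ and then invokes the equivalence with $\gamma = \alpha + n + 1$. For the second, I would prove the contrapositive $W(e) \sim_\alpha e_* \Imp \ex n \ a_n \sim_\alpha e_*$ by transfinite induction on $\alpha$, uniformly in $e$: the case $\alpha = 0$ is vacuous since $W(e) \neq e_*$; at $\alpha = \gamma + 1$ the equivalence yields either $a_0 \sim_\gamma e_*$, whence $a_0 \sim_\alpha e_*$ by Theorem~\ref{thm:basic}, or $W(e^+) \sim_\gamma e_*$, whence the induction hypothesis at $\gamma$ applied to $e^+$ produces an $m$ with $a_{m+1} \sim_\gamma e_*$ and hence $a_{m+1} \sim_\alpha e_*$ by Theorem~\ref{thm:basic}; the limit case is immediate from the definition of $\sim_\alpha$ and Theorem~\ref{thm:basic}. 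The one genuinely delicate point is the hereditary-totality argument: the recursion-theoretic fixed point forces one to control arbitrarily deep nestings of the combiner, and one must choose the two inductions — on the number of applications, and on the number of $f$-layers — carefully so that each supports the other.
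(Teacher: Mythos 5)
Your proof is correct, but it takes a genuinely different route from the paper's. The paper constructs a fresh $\omega$-ary combiner directly: it uses the recursion theorem to build a function $h(e, \la x_0, \dots, x_{n-1}\ra)$ whose internal state is a growing \emph{flat} tuple, with one new $\Phi_e(n)$ appended per application, and it reproves the two key preservation facts (Claims~\ref{clm-PresEquiv} and \ref{clm-PresNotEquiv}) by transfinite induction on $\alpha$ for this $h$; Lemma~\ref{lem-Combine2} appears in the paper only as a warm-up and is not re-used. You instead iterate the binary combiner $f$ of Lemma~\ref{lem-Combine2} via a recursion-theorem fixed point $W$ with $\Phi_{W(e)}(z) \simeq f(\Phi_e(0), W(e^+))$, so that the state after $k$ applications is a right-nested expression $f(b_0, f(b_1, \dots, f(b_{k-1}, W(e^{(k)}))\cdots))$, and you recover the ordinal behaviour from the displayed equivalence $W(e^{(m)}) \sim_{\gamma+1} e_* \Biimp a_m \sim_\gamma e_* \orr W(e^{(m+1)}) \sim_\gamma e_*$ by inductions on $n$ and on $\alpha$. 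What your approach buys is modularity: Lemma~\ref{lem-Combine2} is used as a genuine black box, and the ordinal-preservation arguments reduce to a couple of clean one-line inductions on that equivalence rather than reproving preservation claims for a new combiner. The cost is exactly the one you flag: because the state is nested rather than flat, the hereditary-totality verification needs a double induction (on the number of remaining applications $k$, and inside the $k=1$ case on the nesting depth $j$), whereas in the paper a single induction on the number of applications suffices. Your handling of both the double induction and the $e_*$-absorption cases ($e_* \cdot z = e_* = f(e_*, \cdot)$) is correct, so the proposal is sound; a minor stylistic remark is that the two inductions can be presented as a single induction on the lexicographic order of $(k, j)$, which may make the mutual support between them more transparent.
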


\begin{proof}
Let $g(i,e,\la x_0, \dots, x_{n-1} \ra)$ be a total computable function such that
\begin{align*}
\Phi_{g(i,e,\la x_0, \dots, x_{n-1} \ra)}(\la u_0, \dots, u_{n-1} \ra_n) &\simeq \Phi_i(e, \la x_0u_0, \dots, x_{n-1}u_{n-1}, \Phi_e(n)\ra) & &\text{when $n > 0$}\\
\Phi_{g(i,e, \epsilon)}(u) &\simeq \Phi_i(e, \la \Phi_e(0) \ra) & &\text{when $n = 0$}.
\end{align*}
By padding, we may ensure that $e_* \notin \ran(g)$.  Using the recursion theorem, let $i$ be such that
\begin{align*}
\Phi_i(e, \la x_0, \dots, x_{n-1} \ra) =
\begin{cases}
e_* & \text{if $x_\ell = e_*$ for some $\ell < n$}\\
g(i, e, \la x_0, \dots, x_{n-1} \ra) & \text{otherwise}.
\end{cases}
\end{align*}
Note that $\Phi_i$ is total because $g$ is total.  Also note that $\Phi_i(e, \la x_0, \dots, x_{n-1} \ra) = e_*$ if and only if $x_\ell = e_*$ for some $\ell < n$ because $e_* \notin \ran(g)$.  Let $h$ be the function computed by $\Phi_i$.

\begin{claim}\label{clm-app}
Consider any $n > 0$, $\la x_0, \dots, x_{n-1} \ra$, $\la u_0, \dots, u_{n-1} \ra_n$, and $e$.  If $(\forall \ell < n)(x_\ell u_\ell\da)$ and $\Phi_e(n)\da$, then
\begin{align*}
h(e, \la x_0, \dots, x_{n-1} \ra) \cdot \la u_0, \dots, u_{n-1} \ra_n = h(e, \la x_0u_0, \dots, x_{n-1}u_{n-1}, \Phi_e(n) \ra).
\end{align*}
Furthermore, if $\Phi_e(0)\da$, then for every $u$,
\begin{align*}
h(e, \epsilon) \cdot u = h(e, \la \Phi_e(0) \ra).
\end{align*}
\end{claim}

\begin{proof}[Proof of Claim]
For $n = 0$,
\begin{align*}
h(e, \epsilon) \cdot u = \Phi_i(e, \epsilon) \cdot u = g(i, e, \epsilon) \cdot u = \Phi_{g(i, e, \epsilon)}(u) = \Phi_i(e, \la \Phi_e(0) \ra) = h(e, \la \Phi_e(0) \ra).
\end{align*}

For $n > 0$, first suppose that $x_\ell = e_*$ for some $\ell < n$.  Then also $x_\ell u_\ell = e_*$, so
\begin{align*}
h(e, \la x_0, \dots, x_{n-1} \ra) \cdot \la u_0, \dots, u_{n-1} \ra_n &= \Phi_i(e, \la x_0, \dots, x_{n-1} \ra) \cdot \la u_0, \dots, u_{n-1} \ra_n \\
&= e_* \cdot \la u_0, \dots, u_{n-1} \ra_n \\
&= e_* \\
&= \Phi_i(e, \la x_0u_0, \dots, x_{n-1}u_{n-1}, \Phi_e(n) \ra) \\
&= h(e, \la x_0u_0, \dots, x_{n-1}u_{n-1}, \Phi_e(n) \ra).
\end{align*}

Now suppose that $x_\ell \neq e_*$ for all $\ell < n$.  Then
\begin{align*}
h(e, \la x_0, \dots, x_{n-1} \ra) \cdot \la u_0, \dots, u_{n-1} \ra_n &= \Phi_i(e, \la x_0, \dots, x_{n-1} \ra) \cdot \la u_0, \dots, u_{n-1} \ra_n \\
&= g(i, e, \la x_0, \dots, x_{n-1} \ra) \cdot \la u_0, \dots, u_{n-1} \ra_n \\
&= \Phi_{g(i, e, \la x_0, \dots, x_{n-1} \ra)}(\la u_0, \dots, u_{n-1} \ra_n)\\
&= \Phi_i(e, \la x_0u_0, \dots, x_{n-1}u_{n-1}, \Phi_e(n) \ra) \\
&= h(e, \la x_0u_0, \dots, x_{n-1}u_{n-1}, \Phi_e(n) \ra).
\end{align*}
\end{proof}

Using Claim~\ref{clm-app}, we show that if $x_\ell$ is hereditarily total for all $\ell < n$ and $\Phi_e(m)$ is defined and hereditarily total for all $m \geq n$, then $h(e, \la x_0, \dots, x_{n-1} \ra)$ is hereditarily total.  To do this, we show by induction on $r$ that for every $v_0, \dots, v_{r-1}$, $h(e, \la x_0, \dots, x_{n-1} \ra) \cdot v_0 \cdots v_{r-1}$ is defined and of the form $h(e, \la y_0, \dots, y_{n+r-1} \ra)$, where $y_\ell$ hereditarily total for each $\ell < n+r$.  The base case $r=0$ is trivial.  Consider $r+1$ and $h(e, \la x_0, \dots, x_{n-1} \ra)\cdot v_0 \cdots v_{r-1}v_r$.  By the induction hypothesis,
\begin{align*}
h(e, \la x_0, \dots, x_{n-1} \ra)\cdot v_0 \cdots v_{r-1} = h(e, \la y_0, \dots, y_{n+r-1} \ra),
\end{align*}
where $y_\ell$ is hereditarily total for each $\ell < n+r$.  Decode $v_r$ via $\la \cdot \ra_{n+r}$ as $v_r = \la u_0, \dots, u_{n+r-1} \ra_{n+r}$.  Then
\begin{align*}
h(e, \la x_0, \dots, x_{n-1} \ra)\cdot v_0 \cdots v_{r-1}v_r &= h(e, \la y_0, \dots, y_{n+r-1} \ra) \cdot \la u_0, \dots, u_{n+r-1} \ra_{n+r}\\
&= h(e, \la y_0u_0, \dots, y_{n+r-1}u_{n+r-1}, \Phi_e(n+r) \ra)
\end{align*}
by Claim~\ref{clm-app}, where $y_\ell u_\ell$ is hereditarily total for each $\ell < n+r$ because $y_\ell$ is hereditarily total for each $\ell < n+r$, and $\Phi_e(n+r)$ is defined and hereditarily total by assumption.

\begin{claim}\label{clm-PresEquiv}
Let $\la x_0, \dots, x_{n-1} \ra$, $e$, and $\alpha$ be such that 
\begin{itemize}
\item $(\forall \ell < n)(\text{$x_\ell$ is hereditarily total})$,
\item $(\forall m \geq n)(\text{$\Phi_e(m)\da$ is hereditarily total})$, and 
\item $(\exists \ell < n)(x_\ell \sim_\alpha e_*)$.
\end{itemize}
Then $h(e, \la x_0, \dots, x_{n-1} \ra) \sim_\alpha e_*$.
\end{claim}

\begin{proof}[Proof of Claim]
Proceed by transfinite induction on $\alpha$.  Notice that if $(\exists \ell < n)(x_\ell \sim_\alpha e_*)$, then $n > 0$.  For the base case, consider $\la x_0, \dots, x_{n-1} \ra$, and suppose that $x_\ell \sim_0 e_*$ for some $\ell < n$.  Then $x_\ell = e_*$, so $h(e, \la x_0, \dots, x_{n-1} \ra) = \Phi_i(e, \la x_0, \dots, x_{n-1} \ra) = e_*$.  Thus $h(e, \la x_0, \dots, x_{n-1} \ra) \sim_0 e_*$.

For the successor case, consider $\la x_0, \dots, x_{n-1} \ra$, and suppose that $x_\ell \sim_{\alpha + 1} e_*$ for some $\ell < n$.  For every $u = \la u_0, \dots, u_{n-1} \ra_n$, we have that $x_\ell u_\ell \sim_\alpha e_*$.  Then by Claim~\ref{clm-app},
\begin{align*}
h(e, \la x_0, \dots, x_{n-1} \ra) \cdot u &= h(e, \la x_0, \dots, x_{n-1} \ra) \cdot \la u_0, \dots, u_{n-1} \ra_n\\
&= h(e, \la x_0u_0, \dots, x_{n-1}u_{n-1}, \Phi_e(n) \ra).
\end{align*}
Thus by the induction hypothesis, we have that $h(e, \la x_0u_0, \dots, x_{n-1}u_{n-1}, \Phi_e(n) \ra) \sim_\alpha e_*$.  Therefore $h(e, \la x_0, \dots, x_{n-1} \ra) \cdot u \sim_\alpha e_*$ for every $u$, so $h(e, \la x_0, \dots, x_{n-1} \ra) \sim_{\alpha + 1} e_*$.

For the limit case, suppose that $\alpha$ is a limit, consider $\la x_0, \dots, x_{n-1} \ra$, and suppose that $x_\ell \sim_\alpha e_*$ for some $\ell < n$.  Then $x_\ell \sim_\beta e_*$ for some $\beta < \alpha$.  By the induction hypothesis, $h(e, \la x_0, \dots, x_{n-1} \ra) \sim_\beta e_*$ as well, so $h(e, \la x_0, \dots, x_{n-1} \ra) \sim_\alpha e_*$.
\end{proof}

\begin{claim}\label{clm-PresNotEquiv}
Let $\la x_0, \dots, x_{n-1} \ra$, $e$, and $\alpha$ be such that 
\begin{itemize}
\item $(\forall \ell < n)(\text{$x_\ell$ is hereditarily total and $x_\ell \not\sim_\alpha e_*$})$ and
\item $(\forall m \geq n)(\text{$\Phi_e(m)\da$ is hereditarily total and $\Phi_e(m) \not\sim_\alpha e_*$})$.
\end{itemize}
Then $h(e, \la x_0, \dots, x_{n-1} \ra) \not\sim_\alpha e_*$.
\end{claim}

\begin{proof}[Proof of Claim]
Proceed by transfinite induction on $\alpha$.  For the base case, consider $\la x_0, \dots, x_{n-1} \ra$.  Suppose that $(\forall \ell < n)(x_\ell \not\sim_0 e_*)$ and that $(\forall m \geq n)(\Phi_e(m) \not\sim_0 e_*)$.  Then $(\forall \ell < n)(x_\ell \neq e_*)$, so
\begin{align*}
h(e, \la x_0, \dots, x_{n-1} \ra) = \Phi_i(e, \la x_0, \dots, x_{n-1} \ra) = g(i, e, \la x_0, \dots, x_{n-1} \ra) \neq e_*
\end{align*}
because $e_* \notin \ran(g)$.  Thus $h(e, \la x_0, \dots, x_{n-1} \ra) \not\sim_0 e_*$.

For the successor case, consider $\la x_0, \dots, x_{n-1} \ra$.  Suppose that $(\forall \ell < n)(x_\ell \not\sim_{\alpha+1} e_*)$ and that $(\forall m \geq n)(\Phi_e(m) \not\sim_{\alpha+1} e_*)$.  If $n = 0$, then $\la x_0, \dots, x_{n-1} \ra = \epsilon$.  In this case, for any $u$ we have that $h(e, \epsilon) \cdot u = h(e, \la \Phi_e(0) \ra) \not\sim_\alpha e_*$ by Claim~\ref{clm-app}, the assumption that $\Phi_e(0) \not\sim_{\alpha+1} e_*$ and hence $\Phi_e(0) \not\sim_\alpha e_*$, and the induction hypothesis.  Therefore $h(e, \epsilon) \not\sim_{\alpha + 1} e_*$.  Now assume that $n > 0$.  For each $\ell < n$, let $u_\ell$ be such that $x_\ell u_\ell \not\sim_\alpha e_*$.  Then by Claim~\ref{clm-app},
\begin{align*}
h(e, \la x_0, \dots, x_{n-1} \ra) \cdot \la u_0, \dots, u_{n-1} \ra_n = h(e, \la x_0u_0, \dots, x_{n-1}u_{n-1}, \Phi_e(n) \ra),
\end{align*}
and the sequence $\la y_0, \dots, y_{n-1}, y_n \ra = \la x_0u_0, \dots, x_{n-1}u_{n-1}, \Phi_e(n) \ra$ satisfies $(\forall \ell < n+1)(y_\ell \not\sim_\alpha e_*)$ and $(\forall m \geq n+1)(\Phi_e(m) \not\sim_\alpha e_*)$.  Thus $h(e, \la x_0u_0, \dots, x_{n-1}u_{n-1}, \Phi_e(n) \ra) \not\sim_\alpha e_*$ by the induction hypothesis.  Therefore $h(e, \la x_0, \dots, x_{n-1} \ra) \cdot \la u_0, \dots, u_{n-1} \ra_n \not\sim_\alpha e_*$, so $h(e, \la x_0, \dots, x_{n-1} \ra) \not\sim_{\alpha + 1} e_*$.

For the limit case, suppose that $\alpha$ is a limit, and consider $\la x_0, \dots, x_{n-1} \ra$.  Suppose that $(\forall \ell < n)(x_\ell \not\sim_\alpha e_*)$ and that $(\forall m \geq n)(\Phi_e(m) \not\sim_\alpha e_*)$.  Then for every $\beta < \alpha$, it is also the case that $(\forall \ell < n)(x_\ell \not\sim_\beta e_*)$ and that $(\forall m \geq n)(\Phi_e(m) \not\sim_\beta e_*)$.  Therefore $h(e, \la x_0, \dots, x_{n-1} \ra) \not\sim_\beta e_*$ for all $\beta < \alpha$ by the induction hypothesis.  Thus $h(e, \la x_0, \dots, x_{n-1} \ra) \not\sim_\alpha e_*$.
\end{proof}

Now define $f$ by $f(e) = h(e, \epsilon)$.  This is our desired $f$.  Suppose that $\Phi_e(n)\da = a_n$ is hereditarily total for every $n$.  Let $w = f(e)$.  Then $w$ is hereditarily total by the discussion following Claim~\ref{clm-app}.  To finish the proof of the lemma, we must show the following for every ordinal $\alpha$.
\begin{itemize}
\item If $a_n \sim_\alpha e_*$ for some $n$, then $w \sim_{\alpha + n + 1} e_*$.
\item If $a_n \not\sim_\alpha e_*$ for every $n$, then $w \not\sim_\alpha e_*$.
\end{itemize}

First suppose that $a_n \sim_\alpha e_*$ for some $n$ and $\alpha$.  Consider any $v_0, \dots, v_n$, and for each $1 \leq \ell \leq n$, decode $v_\ell$ via $\la \cdot \ra_\ell$ as $v_\ell = \la u^\ell_0, \dots u^\ell_{\ell-1} \ra_\ell$.  By repeated applications of Claim~\ref{clm-app},
\begin{align*}
w v_0v_1v_2v_3\cdots v_n &= f(e) \cdot v_0v_1v_2v_3 \cdots v_n \\
&= h(e, \epsilon) \cdot v_0v_1v_2v_3 \cdots v_n \\
&= h(e, \epsilon) \cdot v_0 \cdot \la u^1_0 \ra_1 \cdot \la u^2_0, u^2_1 \ra_2 \cdot \la u^3_0, u^3_1, u^3_2 \ra_3 \cdots \la u^n_0, \dots, u^n_{n-1} \ra_n \\
&= h(e, \la a_0 \ra) \cdot \la u^1_0 \ra_1 \cdot \la u^2_0, u^2_1 \ra_2 \cdot \la u^3_0, u^3_1, u^3_2 \ra_3 \cdots \la u^n_0, \dots, u^n_{n-1} \ra_n \\
&= h(e, \la a_0u^1_0,\; a_1 \ra) \cdot \la u^2_0, u^2_1 \ra_2 \cdot \la u^3_0, u^3_1, u^3_2 \ra_3 \cdots \la u^n_0, \dots, u^n_{n-1} \ra_n \\
&= h(e, \la a_0u^1_0u^2_0,\; a_1u^2_1,\; a_2 \ra) \cdot \la u^3_0, u^3_1, u^3_2 \ra_3 \cdots \la u^n_0, \dots, u^n_{n-1} \ra_n\\
&\;\;\vdots \\
&= h(e, \la a_0u^1_0u^2_0u^3_0 \cdots u^n_0,\; a_1u^2_1u^3_1 \cdots u^n_1,\; a_2u^3_2 \cdots u^n_2, \;\dots,\; a_{n-1}u^{n}_{n-1},\; a_n \ra).
\end{align*}
Let $x_\ell$ denote $a_\ell u^{\ell+1}_\ell u^{\ell+2}_\ell \cdots u^n_\ell$ for each $\ell \leq n$ (with $x_n = a_n$), so that we may write
\begin{align*}
w v_0v_1v_2v_3\cdots v_n &= h(e, \la x_0, \dots, x_n \ra).
\end{align*}
Each $x_\ell$ is hereditarily total because each $a_\ell$ is hereditarily total.  We assumed that $x_n = a_n \sim_\alpha e_*$, so $h(e, \la x_0, \dots, x_n \ra) \sim_\alpha e_*$ by Claim~\ref{clm-PresEquiv}.  The initial $v_0, \dots, v_n$ were arbitrary, so we have shown that $w v_0 \cdots v_n \sim_\alpha e_*$ for every $v_0, \dots, v_n$.  Therefore $w \sim_{\alpha + n + 1} e_*$, as desired.  

Finally, suppose that $a_n \not\sim_\alpha e_*$ for every $n$.  Then $w = f(e) = h(e, \epsilon) \not\sim_\alpha e_*$ by Claim~\ref{clm-PresNotEquiv}.  This completes the proof of the lemma.
\end{proof}

\begin{lemma}\label{lem-HardnessHelper}
Let $\varphi(z)$ be a computable $\Sigma^0_\alpha$ formula for a computable $\alpha \geq 1$.  There is a computable procedure which, for every $z$, produces a hereditarily total index $p_z\in \K_1$ satisfying the following conditions.
\begin{itemize}
\item If $\alpha$ is even and $\varphi(z)$ holds, then $p_z \sim_{G(\alpha)} e_*$.
\item If $\alpha$ is even and $\neg\varphi(z)$ holds, then $p_z \napprox e_*$.
\item If $\alpha$ is odd and $\neg\varphi(z)$ holds, then $p_z\sim_{G(\alpha)} e_*$.
\item If $\alpha$ is odd and $\varphi(z)$ holds, then $p_z \napprox e_*$.
\end{itemize}
\end{lemma}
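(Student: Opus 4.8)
The plan is to argue by effective transfinite recursion on the computable ordinal $\alpha$, following the recursive clauses of $G$ together with the disjunctive/conjunctive structure of the computable infinitary $\Sigma^0_\alpha$ and $\Pi^0_\alpha$ formulas. Throughout I would use the fixed hereditarily total $e_*$ with $e_* \cdot x = e_*$, together with a second hereditarily total $e_{**} \in \K_1$ obtained from the recursion theorem with a padding argument so that $e_{**} \neq e_*$ and $e_{**} \cdot x = e_{**}$ for every $x$; note that $e_{**} \napprox e_*$, since $e_{**} a_0 \cdots a_n = e_{**} \neq e_* = e_* a_0 \cdots a_n$ for every finite sequence $a_0, \dots, a_n$. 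For the base case $\alpha = 1$ (odd, with $G(1) = 1$), write the given $\Sigma^0_1$ formula as $\varphi(z) \equiv \exists n\, R(z, n)$ with $R$ computable and take $p_z$ with $p_z \cdot x = e_{**}$ if $(\exists n \le x)\, R(z, n)$ and $p_z \cdot x = e_*$ otherwise; then $p_z$ is hereditarily total, $\neg\varphi(z)$ forces $p_z \cdot x = e_*$ for all $x$ hence $p_z \sim_1 e_*$, and $\varphi(z)$ forces $p_z \cdot n_0 = e_{**} \napprox e_*$ for a witness $n_0$ hence $p_z \napprox e_*$.

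For the inductive step with $\alpha$ even (so $\alpha = \beta + 1$ with $\beta$ odd, or $\alpha$ a limit; in both cases $G(\alpha)$ is a limit ordinal), I would first rewrite $\varphi$, absorbing its existential quantifiers into the c.e.\ disjunction, as $\varphi(z) \equiv \bigvee_m \chi_m(z)$ with each $\chi_m$ a uniformly computable $\Pi^0_{\delta_m}$ formula, $\delta_m < \alpha$; since a $\Pi^0_\delta$ formula is uniformly also a $\Pi^0_{\delta+1}$ one, I may assume every $\delta_m$ is odd (one checks $\delta_m + 1 < \alpha$: this is clear when $\alpha$ is a limit, and when $\alpha = \beta + 1$ with $\beta$ odd an even $\delta_m < \alpha$ satisfies $\delta_m < \beta$, so $\delta_m + 1 \le \beta < \alpha$). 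Applying the inductive hypothesis to $\neg\chi_m \in \Sigma^0_{\delta_m}$, with $\delta_m$ odd, yields uniformly computable hereditarily total $q_m$ with $q_m \sim_{G(\delta_m)} e_*$ when $\chi_m(z)$ holds and $q_m \napprox e_*$ when $\neg\chi_m(z)$ holds. Then I would feed $(q_m)_m$ to Lemma~\ref{lem-CombineOmega} to obtain a hereditarily total $p_z$: if $\varphi(z)$ holds, fix $m_0$ with $\chi_{m_0}(z)$, so $q_{m_0} \sim_{G(\delta_{m_0})} e_*$ and hence $p_z \sim_{G(\delta_{m_0}) + m_0 + 1} e_*$, and since $G(\delta_{m_0}) < G(\alpha)$ with $G(\alpha)$ a limit we get $G(\delta_{m_0}) + m_0 + 1 < G(\alpha)$ and therefore $p_z \sim_{G(\alpha)} e_*$ by Theorem~\ref{thm:basic}; if $\neg\varphi(z)$ holds then every $\chi_m(z)$ fails, every $q_m \napprox e_*$, and applying the second conclusion of Lemma~\ref{lem-CombineOmega} at each ordinal $\gamma$ gives $p_z \not\sim_\gamma e_*$ for all $\gamma$, i.e.\ $p_z \napprox e_*$.

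For the inductive step with $\alpha \ge 3$ odd, write $G(\alpha) = \gamma_0 + 1$ where $\gamma_0 = G(\alpha^-)$ for the predecessor $\alpha^-$ of $\alpha$. Dually, I would rewrite the $\Pi^0_\alpha$ formula $\neg\varphi$ as a c.e.\ conjunction $\neg\varphi(z) \equiv \bigwedge_j \eta_j(z)$ with each $\eta_j$ a uniformly computable $\Sigma^0_{\rho_j}$ formula, $\rho_j < \alpha$; using that a $\Sigma^0_\rho$ formula is uniformly a $\Sigma^0_{\rho+1}$ one, arrange every $\rho_j$ to be even and $\ge 1$ (again $\rho_j + 1 < \alpha$, as an odd $\rho_j < \alpha$ satisfies $\rho_j < \alpha^-$). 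Applying the inductive hypothesis to $\eta_j \in \Sigma^0_{\rho_j}$, with $\rho_j$ even, yields uniformly computable hereditarily total $q_j$ with $q_j \sim_{G(\rho_j)} e_*$ when $\eta_j(z)$ holds and $q_j \napprox e_*$ when $\neg\eta_j(z)$ holds. Let $p_z$ satisfy $p_z \cdot j = q_j$ for all $j$; it is hereditarily total. If $\neg\varphi(z)$ holds, every $\eta_j(z)$ holds, so $q_j \sim_{G(\rho_j)} e_*$; as $\rho_j \le \alpha^-$ we have $G(\rho_j) \le \gamma_0$, so $p_z \cdot j = q_j \sim_{\gamma_0} e_*$ by Theorem~\ref{thm:basic}, and since $e_* \cdot j = e_*$ and this holds for every $j$ we get $p_z \sim_{\gamma_0 + 1} e_*$, which is $p_z \sim_{G(\alpha)} e_*$. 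If $\varphi(z)$ holds, some $\eta_{j_0}(z)$ fails, so $p_z \cdot j_0 = q_{j_0} \napprox e_*$, and then $p_z \napprox e_*$ by Theorem~\ref{thm:basic}, since $p_z \sim_\gamma e_*$ would give $p_z \cdot j_0 \sim_\gamma e_*$.

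I expect the main obstacle to be bookkeeping of polarities: the inductive hypothesis only encodes a $\Sigma^0_\delta$ formula ``positively'' (its truth yielding $\sim_{G(\delta)} e_*$) when $\delta$ is even, and ``negatively'' (its truth yielding $\napprox e_*$) when $\delta$ is odd, so after reducing $\varphi$ to a disjunction of $\Pi^0_{\delta_m}$ formulas one must force the $\delta_m$ to be odd to make the hypothesis applicable to $\neg\chi_m$ in the useful direction, and dually in the odd case. A second delicate point is that the ``no'' conclusion must be the strong statement $p_z \napprox e_*$ rather than merely $p_z \not\sim_{G(\alpha)} e_*$: this is exactly what lets the induction close, and it survives both combining steps --- via Lemma~\ref{lem-CombineOmega}, which for every fixed $\gamma$ turns ``$q_m \not\sim_\gamma e_*$ for all $m$'' into ``$p_z \not\sim_\gamma e_*$'', and via Theorem~\ref{thm:basic}, which propagates $\napprox e_*$ through application in the conjunctive step. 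Finally, one must stay inside the hereditarily total elements at every stage --- the reason the base case is phrased using $e_*$ and $e_{**}$ rather than anything built from $K$ --- so that Lemma~\ref{lem-CombineOmega}, whose inputs must be hereditarily total, applies throughout; matching the resulting complexities $\sim_{G(\alpha)}$ against the bound for $\sim_\alpha$ from Lemma~\ref{lem-HypDefF} (using $F(G(\alpha)) = \alpha$ by Lemma~\ref{lem-FGInv}) is then what will deliver the hardness results of the section.
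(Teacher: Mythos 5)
Your proposal is correct and follows essentially the same route as the paper's own proof: effective transfinite recursion on $\alpha$, a base case built from $e_*$ and a second hereditarily total fixed point (your $e_{**}$, the paper's $c_*$), a c.e.-disjunction case handled via Lemma~\ref{lem-CombineOmega} with the subformulas promoted to odd level so that the inductive hypothesis applies in the needed direction, and a c.e.-conjunction case handled by a single application with the subformulas promoted to even level. The small presentational differences (absorbing the leading existential quantifiers into the disjunction index rather than keeping them as a separate coordinate of a pairing, and explicitly flagging $\alpha \geq 3$ in the odd step) do not change the argument.
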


\begin{proof}
We produce $p_z$ by effective transfinite recursion on $\alpha \geq 1$.  The $\Sigma^0_\alpha$ formula $\varphi(z)$ is indexed as c.e.\ disjunction
\begin{align*}
\varphi(z) \equiv \bigvee_{\ell \in \omega} \exists n \; \psi_\ell(z, n),
\end{align*}
where each $\psi_\ell(z,n)$ is a computable $\Pi^0_{\beta_\ell}$ formula for some $\beta_\ell < \alpha$.

For the base case $\alpha = 1$, for every $z$ we want a hereditarily total $p_z$ such that $p_z \sim_1 e_*$ if $\neg\varphi(z)$ holds and $p_z \napprox e_*$ if $\varphi(z)$ holds.  In this case, $\varphi(z)$ is a computable $\Sigma^0_1$ formula, so each $\psi_\ell$ is given by an index $e_\ell$ where $\Phi_{e_\ell}(z,n)$ computes the characteristic function of $\psi_\ell(z,n)$.  Using the recursion theorem, fix a $c_* \neq e_*$ such that $c_* x = c_*$ for every $x$, and notice that $c_* \napprox e_*$.  Given $z$, compute an index $p_z$ so that for every coded pair $\la \ell, n \ra_2$
\begin{align*}
p_z \cdot \la \ell, n \ra_2 = 
\begin{cases}
c_* & \text{if $\Phi_{e_\ell}(z, n) = 1$}\\
e_* & \text{if $\Phi_{e_\ell}(z, n) = 0$}.
\end{cases}
\end{align*}
Note that $p_z$ is hereditarily total because it always outputs either $c_*$ or $e_*$, both of which are hereditarily total.  If $\neg\varphi(z)$ holds, then $\neg\psi_\ell(z, n)$ holds for every $\ell$ and $n$, so $p_z \cdot \la \ell, n \ra_2 = e_*$ for every $\la \ell, n \ra_2$.  Thus $p_z \sim_1 e_*$.  If $\varphi(z)$ holds, then $\psi_\ell(z, n)$ holds for some $\ell$ and $n$, in which case $p_z \cdot \la \ell, n \ra_2 = c_*$.  Therefore $p_z \napprox e_*$ because $c_* \napprox e_*$.

Now suppose that $\alpha \geq 1$ is even.  For each $\ell$, we may assume that $\beta_\ell$ is odd by adding $1$ to $\beta_\ell$ as necessary, and we can uniformly effectively compute an index for a computable $\Sigma^0_{\beta_\ell}$ formula equivalent to $\neg\psi_\ell(z, n)$.  By effective transfinite recursion, for each $z, \ell, n \in \omega$, we can uniformly compute a hereditarily total index $q^z_{\ell, n} \in \K_1$ such that 
\begin{itemize}
\item if $\psi_\ell(z,n)$ holds, then $q^z_{\ell, n} \sim_{G(\beta_\ell)} e_*$; and
\item if $\neg\psi_\ell(z,n)$ holds, then $q^z_{\ell, n} \napprox e_*$.
\end{itemize}
Given $z$, let $p_z$ be the $w$ provided by Lemma~\ref{lem-CombineOmega} for the computable sequence $(q^z_{\ell, n} : \ell, n \in \omega)$.  Then $p_z$ is hereditarily total.  If $\varphi(z)$ holds, then $\psi_\ell(z, n)$ holds for some $\ell$ and $n$.  In this case, $q^z_{\ell, n} \sim_{G(\beta_\ell)} e_*$, so $p_z \sim_{G(\beta_\ell) + \omega} e_*$.  If $\alpha = \beta + 2d + 2$ for $\beta = 0$ or $\beta$ a limit, then $\beta_\ell \leq \beta + 2d + 1$, so $G(\beta_\ell) + \omega \leq G(\beta + 2d + 1) + \omega = G(\alpha)$.  Therefore $p_z \sim_{G(\alpha)} e_*$, as desired.  If $\alpha$ is a limit, then $\beta_\ell + 1 < \alpha$, so $G(\beta_\ell + 1) = G(\beta_\ell) + \omega \leq G(\alpha)$.  So again $p_z \sim_{G(\alpha)} e_*$, as desired.  If $\neg\varphi(z)$ holds, then $\neg\psi_\ell(z,n)$ holds for every $\ell$ and $n$.  In this case, $q^z_{\ell, n} \napprox e_*$ for every $\ell$ and $n$, so $p_z \napprox e_*$ as desired.

Finally, suppose that $\alpha \geq 1$ is odd and that $\alpha = \beta + 2d +1$ for either $\beta = 0$ or $\beta$ a limit.  For each $\ell$, we may assume that $\beta_\ell$ is even by adding $1$ to $\beta_\ell$ as necessary, and again we can uniformly effectively compute an index for a computable $\Sigma^0_{\beta_\ell}$ formula equivalent to $\neg\psi_\ell(z, n)$.  By effective transfinite recursion, for each $z, \ell, n \in \omega$, we can uniformly compute a hereditarily total index $q^z_{\ell, n}$ such that 
\begin{itemize}
\item if $\neg\psi_\ell(z,n)$ holds, then $q^z_{\ell, n} \sim_{G(\beta_\ell)} e_*$; and
\item if $\psi_\ell(z,n)$ holds, then $q^z_{\ell, n} \napprox e_*$.
\end{itemize}
Define $p_z$ so that $p_z \cdot \la \ell, n \ra_2 = q^z_{\ell, n}$ for every coded pair $\la \ell, n \ra_2$.  Then $p_z$ is hereditarily total because every $q^z_{\ell, n}$ is hereditarily total.  If $\neg\varphi(z)$ holds, then $\neg\psi_\ell(z,n)$ holds for every $\ell$ and $n$.  In this case, $p_z \cdot \la \ell, n \ra_2 = q^z_{\ell, n} \sim_{G(\beta_\ell)} e_*$ for every $\la \ell, n \ra_2$.  For each $\ell$, $\beta_\ell \leq \beta + 2d$, so $G(\beta_\ell) \leq G(\beta+2d)$.  Thus $p_z \cdot \la \ell, n \ra_2 \sim_{G(\beta + 2d)} e_*$ for all $\la \ell, n \ra_2$.  Thus $p_z \sim_{G(\beta + 2d) + 1} e_*$.  We have that $G(\alpha) = G(\beta + 2d + 1) = G(\beta + 2d) + 1$, so $p_z \sim_{G(\alpha)} e_*$, as desired.  On the other hand, if $\varphi(z)$ holds, then $\psi_\ell(z,n)$ holds for some $\ell$ and $n$.  In this case, $p_z \cdot \la \ell, n \ra_2 = q^z_{\ell, n} \napprox e_*$, so $p_z \napprox e_*$, as desired.
\end{proof}

\begin{lemma}\label{lem-HardnessG}
Let $\alpha \geq 1$ be a computable ordinal.
\begin{itemize}
\item If $\alpha$ is even, then $\sim_{G(\alpha)}$ is $\Sigma^0_\alpha$-hard.
\item If $\alpha$ is odd, then $\sim_{G(\alpha)}$ is $\Pi^0_\alpha$-hard.
\end{itemize}
\end{lemma}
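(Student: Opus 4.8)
The plan is to obtain Lemma~\ref{lem-HardnessG} as an immediate consequence of Lemma~\ref{lem-HardnessHelper}: the map $z \mapsto (p_z, e_*)$ produced there will serve as the desired many-one reduction, with the fixed hereditarily total element $e_*$ as a target point. The point is simply that Lemma~\ref{lem-HardnessHelper} already packages the entire construction, and its four-way case split translates directly into the two hardness claims here. Throughout, I use that a pair $(p_z, e_*)$ of elements of $\K_1$ is coded as an element of $\omega$ via a fixed computable pairing, and that $p_z \napprox e_*$ entails $p_z \not\sim_{G(\alpha)} e_*$ directly from the definition of $\approx$.

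First I would treat the even case. Let $\alpha \geq 1$ be even, and let $A \subseteq \omega$ be an arbitrary $\Sigma^0_\alpha$ set. By the definition of the computable infinitary hierarchy, $A$ is defined by some computable $\Sigma^0_\alpha$ formula $\varphi(z)$. Feeding $\varphi$ into Lemma~\ref{lem-HardnessHelper} yields a computable function $z \mapsto p_z$ such that $p_z \sim_{G(\alpha)} e_*$ whenever $\varphi(z)$ holds (i.e.\ $z \in A$) and $p_z \napprox e_*$ whenever $\neg\varphi(z)$ holds (i.e.\ $z \notin A$). In the second case $p_z \not\sim_{G(\alpha)} e_*$, so $z \mapsto (p_z, e_*)$ witnesses $A \leqm {\sim_{G(\alpha)}}$. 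Since $A$ was an arbitrary $\Sigma^0_\alpha$ set, $\sim_{G(\alpha)}$ is $\Sigma^0_\alpha$-hard.

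For the odd case, let $\alpha \geq 1$ be odd and let $B \subseteq \omega$ be an arbitrary $\Pi^0_\alpha$ set. Then $\overline B$ is $\Sigma^0_\alpha$, so I would fix a computable $\Sigma^0_\alpha$ formula $\varphi(z)$ defining $\overline B$, so that $z \in B \Biimp \neg\varphi(z)$, and again apply Lemma~\ref{lem-HardnessHelper}. With $\alpha$ odd, the lemma gives $p_z \sim_{G(\alpha)} e_*$ whenever $\neg\varphi(z)$ holds (i.e.\ $z \in B$) and $p_z \napprox e_*$, hence $p_z \not\sim_{G(\alpha)} e_*$, whenever $\varphi(z)$ holds (i.e.\ $z \notin B$). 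Thus $z \mapsto (p_z, e_*)$ witnesses $B \leqm {\sim_{G(\alpha)}}$, so $\sim_{G(\alpha)}$ is $\Pi^0_\alpha$-hard. I do not expect any genuine obstacle here: all the real work --- the recursion-theorem constructions, the transfinite inductions, and the combination lemmas Lemma~\ref{lem-Combine2} and Lemma~\ref{lem-CombineOmega} --- has already been carried out in the statement and proof of Lemma~\ref{lem-HardnessHelper}, and the present lemma is a bookkeeping reformulation of it.
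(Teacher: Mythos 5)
Your proposal is correct and follows essentially the same route as the paper's proof: both cases of Lemma~\ref{lem-HardnessG} are read off directly from Lemma~\ref{lem-HardnessHelper} via the map $z \mapsto (p_z, e_*)$, using that $p_z \napprox e_*$ forces $p_z \not\sim_{G(\alpha)} e_*$, and in the odd case replacing the given $\Pi^0_\alpha$ formula by a $\Sigma^0_\alpha$ formula for its negation. No gaps.
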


\begin{proof}
Let $\alpha \geq 1$ be a computable ordinal.

Suppose that $\alpha$ is even and that $\varphi(z)$ is a computable $\Sigma^0_\alpha$ formula.  Let $z \mapsto p_z$ be the computable map of Lemma~\ref{lem-HardnessHelper} for $\varphi(z)$.  Then the map $z \mapsto (p_z, e_*)$ is a many-one reduction witnessing that $\{z : \varphi(z)\} \leqm {\sim_{G(\alpha)}}$.  Thus $\sim_{G(\alpha)}$ is $\Sigma^0_\alpha$-hard.

Suppose that $\alpha$ is odd and that $\psi(z)$ is a computable $\Pi^0_\alpha$ formula.  Let $\varphi(z)$ be a computable $\Sigma^0_\alpha$ formula equivalent to $\neg\psi(z)$, and let $z \mapsto p_z$ be the computable map of Lemma~\ref{lem-HardnessHelper} for $\varphi(z)$.  Then the map $z \mapsto (p_z, e_*)$ is a many-one reduction witnessing that $\{z : \psi(z)\} \leqm {\sim_{G(\alpha)}}$.  Thus $\sim_{G(\alpha)}$ is $\Pi^0_\alpha$-hard.
\end{proof}

\begin{lemma}\label{lem-HardnessF}
Let $\alpha \geq 1$ be a computable ordinal.
\begin{itemize}
\item If $\alpha$ is a successor, then $\sim_\alpha$ is $\Pi^0_{1 + F(\alpha)}$-hard.
\item If $\alpha$ is a limit, then $\sim_\alpha$ is $\Sigma^0_{1 + F(\alpha)}$-hard.
\end{itemize}
\end{lemma}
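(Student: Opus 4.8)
The plan is to follow the proof of Lemma~\ref{lem-HypDefF} almost verbatim, with Lemma~\ref{lem-HardnessG} taking the place of Lemma~\ref{lem-HypDefG}. The only complication is that Lemma~\ref{lem-HardnessG} yields hardness one level below the definability given by Lemma~\ref{lem-HypDefG}, so I would split the argument according to whether $\alpha < \omega^2$ or $\alpha \geq \omega^2$. For $\alpha \geq \omega^2$ the relevant index into $G$ is infinite and the off-by-one is invisible because $1 + \beta = \beta$ for $\beta \geq \omega$; for $\alpha < \omega^2$ I would instead cite the sharper Theorem~\ref{thm:ArithComplexity}.

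Suppose first that $\alpha < \omega^2$. Then $\alpha$ is $\omega k$ for some $k \geq 1$ if it is a limit, and $\omega k + n$ for some $k \geq 0$ and $n > 0$ if it is a successor. The computations at the start of this section give $1 + F(\omega k) = 2k + 1$ and $1 + F(\omega k + n) = 2k + 2$, so the $\Sigma^0_{2k+1}$-hardness of $\sim_{\omega k}$ and the $\Pi^0_{2k+2}$-hardness of $\sim_{\omega k + n}$ provided by Theorem~\ref{thm:ArithComplexity} are exactly the required conclusions.

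Now suppose $\alpha \geq \omega^2$. Since $F$ is non-decreasing and $F(\omega k) = 2k$ for all $k$, we have $F(\alpha) \geq F(\omega^2) \geq \omega$ (as $F(\omega^2) \geq F(\omega k) + 1 = 2k+1$ for every $k$), hence $1 + F(\alpha) = F(\alpha)$. If $\alpha$ is a successor, write $\alpha = \gamma + 1 + d$ with $d < \omega$ and $\gamma$ either $0$ or a limit; $\gamma = 0$ is ruled out because it would give $F(\alpha) = 1$, so $\gamma$ is a limit. By Lemma~\ref{lem-RanG} pick $\beta$ with $G(\beta) = \gamma + 1$; since $\gamma + 1$ is the successor of a limit, $\beta$ is odd, and Lemma~\ref{lem-FGInv} together with the definition of $F$ gives $\beta = F(G(\beta)) = F(\gamma + 1) = F(\gamma + 1 + d) = F(\alpha) \geq \omega$. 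By Lemma~\ref{lem-HardnessG}, ${\sim_{\gamma + 1}} = {\sim_{G(\beta)}}$ is $\Pi^0_\beta$-hard, and $\Pi^0_\beta = \Pi^0_{1 + \beta} = \Pi^0_{1 + F(\alpha)}$. Since ${\sim_\alpha} = {\sim_{\gamma + 1 + d}} \equivm {\sim_{\gamma + 1}}$ by $d$ applications of Lemma~\ref{lem-ReduceBy1}, it follows that $\sim_\alpha$ is $\Pi^0_{1 + F(\alpha)}$-hard. If instead $\alpha$ is a limit, use Lemma~\ref{lem-RanG} to pick $\beta$ with $G(\beta) = \alpha$; then $\beta$ is even and nonzero, $\beta = F(G(\beta)) = F(\alpha) \geq \omega$ by Lemma~\ref{lem-FGInv}, and Lemma~\ref{lem-HardnessG} makes ${\sim_\alpha} = {\sim_{G(\beta)}}$ $\Sigma^0_\beta$-hard, hence $\Sigma^0_{1 + F(\alpha)}$-hard since $\Sigma^0_\beta = \Sigma^0_{1+\beta}$.

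The substantive obstacle is the off-by-one flagged in the discussion opening Section~\ref{sec:arithmetical}: the hereditarily-total reductions behind Lemma~\ref{lem-HardnessG} unavoidably lose a level (for instance $\sim_1$ is only $\Pi^0_1$ when restricted to total elements), so there is genuinely no way to cover the arithmetical levels without the finitary argument of Section~\ref{sec:arithmetical}; past $\omega^2$, the identity $1 + \beta = \beta$ for infinite $\beta$ absorbs the loss, and the remainder is routine manipulation of $F$, $G$, and Lemma~\ref{lem-ReduceBy1}.
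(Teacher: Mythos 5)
Your proof is correct and follows the same strategy as the paper's: split into the arithmetical case $\alpha < \omega^2$ (cite Theorem~\ref{thm:ArithComplexity}) and the case $\alpha \geq \omega^2$ where $1 + F(\alpha) = F(\alpha)$ absorbs the off-by-one, then write $\alpha$ in Cantor-like form, invert $F$ via $G$ and Lemma~\ref{lem-FGInv}, apply Lemma~\ref{lem-HardnessG}, and finish with Lemma~\ref{lem-ReduceBy1}. The only superficial difference is that the paper first restates the goal as $\Pi^0_{F(\alpha)}$- or $\Sigma^0_{F(\alpha)}$-hardness before choosing $\beta$, whereas you show $\Pi^0_\beta$-hardness and then identify $\Pi^0_\beta = \Pi^0_{1+\beta} = \Pi^0_{1+F(\alpha)}$; these are the same calculation in a different order.
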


\begin{proof}
For the ordinals below $\omega^2$:
\begin{align*}
1 + F(\omega k) &= 2k+1 \text{ for every $k > 0$} \\
1 + F(\omega k + n) &= 2k+2 \text{ for every $k$ and $n > 0$}.
\end{align*}
Thus the desired result is given by Theorem~\ref{thm:ArithComplexity}.

Suppose that $\alpha \geq \omega^2$ is a computable ordinal.  Then $F(\alpha) \geq \omega$, so $1 + F(\alpha) = F(\alpha)$.  Thus it suffices to show that $\sim_\alpha$ is $\Pi^0_{F(\alpha)}$-hard when $\alpha$ is a successor and that $\sim_\alpha$ is $\Sigma^0_{F(\alpha)}$-hard when $\alpha$ is a limit.

First suppose that $\alpha$ is a successor, and write $\alpha$ as $\alpha = \gamma + 1 + d$, where $\gamma$ is a limit and $d < \omega$.  By Lemma~\ref{lem-RanG}, let $\beta$ be such that $G(\beta) = \gamma + 1$.  Then $\beta$ is odd, and ${\sim_{G(\beta)}} = {\sim_{\gamma + 1}}$ is $\Pi^0_\beta$-hard by Lemma~\ref{lem-HardnessG}.  Furthermore, $F(\alpha) = F(\gamma + 1 + d) = F(\gamma + 1) = F(G(\beta)) = \beta$, where the last equality is by Lemma~\ref{lem-FGInv}.  Thus $\sim_{\gamma + 1}$ is $\Pi^0_{F(\alpha)}$-hard.  We have that ${\sim_\alpha} = {\sim_{\gamma + 1 + d}} \equivm {\sim_{\gamma + 1}}$ by repeated applications of Lemma~\ref{lem-ReduceBy1}, so $\sim_\alpha$ is $\Pi^0_{F(\alpha)}$-hard.

Now suppose that $\alpha$ is a limit.  By Lemma~\ref{lem-RanG}, let $\beta$ be such that $G(\beta) = \alpha$.  Then $\beta$ is even, and ${\sim_{G(\beta)}} = {\sim_\alpha}$ is $\Sigma^0_{\beta}$-hard by Lemma~\ref{lem-HardnessG}.  We have that $F(\alpha) = F(G(\beta)) = \beta$ by Lemma~\ref{lem-FGInv}, so $\sim_\alpha$ is $\Sigma^0_{F(\alpha)}$-hard.  This completes the proof.
\end{proof}

\begin{theorem}\label{thm-HypComplexity}
Let $\alpha \geq 1$ be a computable ordinal.
\begin{itemize}
\item If $\alpha$ is a successor, then $\sim_\alpha$ is $\Pi^0_{1 + F(\alpha)}$-complete.
\item If $\alpha$ is a limit, then $\sim_\alpha$ is $\Sigma^0_{1 + F(\alpha)}$-complete.
\end{itemize}
\end{theorem}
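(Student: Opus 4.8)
The plan is to assemble the theorem from the two immediately preceding lemmas, which between them already do all the work: Lemma~\ref{lem-HypDefF} establishes membership in the relevant pointclass, and Lemma~\ref{lem-HardnessF} establishes hardness for it. So the proof of Theorem~\ref{thm-HypComplexity} itself is a short bookkeeping argument, with nothing new to construct.

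In detail, fix a computable $\alpha \geq 1$. If $\alpha$ is a successor, then Lemma~\ref{lem-HypDefF} gives that $\sim_\alpha$ is $\Pi^0_{1 + F(\alpha)}$-definable, hence $\Pi^0_{1 + F(\alpha)}$, while Lemma~\ref{lem-HardnessF} gives that $\sim_\alpha$ is $\Pi^0_{1 + F(\alpha)}$-hard; together these say that $\sim_\alpha$ is $\Pi^0_{1 + F(\alpha)}$-complete. If $\alpha$ is a limit, the same two lemmas give that $\sim_\alpha$ is both $\Sigma^0_{1 + F(\alpha)}$-definable and $\Sigma^0_{1 + F(\alpha)}$-hard, hence $\Sigma^0_{1 + F(\alpha)}$-complete. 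As with the earlier hardness results, the many-one reductions witnessing hardness always output elements of $\K_1$ rather than general closed terms (they are built from the hereditarily total indices produced by Lemma~\ref{lem-HardnessHelper}), so the completeness holds whether $\sim_\alpha$ is viewed as a relation on $\K_1$ or as a relation on the closed terms over $\K_1$.

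Since the conclusion is just the conjunction of two already-established facts, there is no genuine obstacle remaining at this stage. The real difficulties were faced and resolved upstream: on the definability side, organizing the effective transfinite recursion through the auxiliary function $G$, whose range (by Lemma~\ref{lem-RanG}) hits exactly the levels $\sim_{G(\beta)}$ that the recursion in Lemma~\ref{lem-HypDefG} needs, together with the identity $F(G(\alpha)) = \alpha$ of Lemma~\ref{lem-FGInv}; on the hardness side, the construction of the disjunction-combining index $w$ in Lemma~\ref{lem-CombineOmega} for hereditarily total inputs, and the off-by-one matching between the computable infinitary $\Sigma^0_\alpha$ layering and the $\sim_{G(\alpha)}$ levels carried out in Lemma~\ref{lem-HardnessHelper}. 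The only care needed when writing out the final proof is to state both the successor case and the limit case separately and to invoke exactly the matching clauses of Lemmas~\ref{lem-HypDefF} and~\ref{lem-HardnessF} in each case.
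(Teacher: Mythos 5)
Your proposal is correct and matches the paper's proof exactly: the paper also dispatches Theorem~\ref{thm-HypComplexity} in one line, citing Lemma~\ref{lem-HypDefF} for membership and Lemma~\ref{lem-HardnessF} for hardness. Your additional remarks about the reductions landing in $\K_1$ and the role of $G$, Lemma~\ref{lem-RanG}, Lemma~\ref{lem-FGInv}, and Lemma~\ref{lem-CombineOmega} accurately describe where the real work was done upstream.
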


\begin{proof}
By Lemma~\ref{lem-HypDefF} and Lemma~\ref{lem-HardnessF}.
\end{proof}

\section{Embeddings and sub-pca's} \label{sec:embeddings}

In order to clarify the relation between the relativized pca's $\K_1^X$
and $\K_2$ we consider the following notion of embedding between pca's.

\begin{definition}\label{def-embedding}
For pca's $\A$ and $\B$, we call $\A$ a \emph{sub-pca} of $\B$
if $\A \subseteq \B$ and the application of $\A$ is the restriction 
of the application of $\B$ to $\A$.
We say that an injective mapping $F \colon \A \imp \B$ is an 
\emph{embedding}\footnote{
It should be noted that this notion of embedding differs from 
the existence of an applicative morphism that is used in 
van Oosten~\cite{vanOosten2006}.
Van Oosten generalizes the notion of relativized computation 
to pca's by constructing a relativized pca $\A[f]$, and shows 
that there is an applicative morphism of $\A$ into $\A[f]$, 
which roughly means that computations of $\A$ can be simulated 
in $\A[f]$.
In particular there is an applicative morphism of 
$\K_1$ into $\K_1[X] = \K_1^X$ for every $X$. 
This means that there is a code $e$ such that for all $n,m,k\in\omega$, 
$\Phi_n(m)\da = k$ if and only if $\Phi_e^X(n,m)\da = k$. 
However, Proposition~\ref{prop-EmbedK1Rel} does not follow from this.
Also, $\A$ is not a sub-pca of $\A[f]$, as the universe is the 
same in both cases, but application is different.}
if, for all $a, b, c \in \A$,
\begin{align*}
(a \cdot_\A b) \da = c \quad&\Longrightarrow\quad (F(a) \cdot_\B F(b)) \da = F(c) \\
(a \cdot_\A b) \ua \quad&\Longrightarrow\quad (F(a) \cdot_\B F(b)) \ua.
\end{align*}
\end{definition}
Some authors additionally require that if $\A$ is to be a sub-pca of $\B$, then $\A$ must contain elements $K$ and $S$ satisfying Definition~\ref{def-pca} for both $\A$ and $\B$.  Likewise, some authors require that an embedding of one pca into another must also preserve some choice of $K$ and $S$.  These extra requirements are not necessary for our purposes.

If $X$ and $Y$ are different subsets of $\omega$, then $\K_1^X$ is not a sub-pca of $\K_1^Y$, as although the underlying set is $\omega$ for both pca's, the application operations are different.  Similarly, we view plain Turing machines and oracle Turing machines as formally different mathematical objects 
with different encodings as natural numbers.  
Thus $\K_1$ is not a sub-pca of $\K_1^X$ for any $X \subseteq \omega$ because again the application operation is different in both pca's, even when, for example, $X = \emptyset$.  However, $\K_1^X$ embeds into $\K_1^Y$ via a computable embedding whenever $X \leqT Y$.  In fact, $\K_1^X$ embeds into $\K_1^Y$ if and only if $X \leqT Y$
(cf.\ Theorem~\ref{thm:iff} below).
Likewise, $\K_1$ embeds into $\K_1^X$ via a computable embedding for every $X$.

\begin{proposition} \label{prop-EmbedK1Rel}
For every $X, Y \subseteq \omega$, if $X \leqT Y$, then $\K_1^X$ embeds into $\K_1^Y$ via a computable embedding.  Likewise, $\K_1$ embeds into $\K_1^X$ via a computable embedding for every $X \subseteq \omega$.
\end{proposition}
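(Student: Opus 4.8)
The plan is to construct the computable embedding $F\colon\K_1^X\imp\K_1^Y$ by the recursion theorem, arranging that the machine with index $F(n)$, run with oracle $Y$ on an argument of the form $F(m)$, first \emph{decodes} $m$, then \emph{simulates} $\Phi^X_n(m)$ using $Y$, and finally \emph{re-encodes} the result by applying $F$ to it. The re-encoding step is what makes this more than a routine relativization, and it is the main obstacle: the na\"ive translation supplied by the relativized $S$-$m$-$n$ theorem is a computable $t$ with $\Phi^Y_{t(n)} = \Phi^X_n$, but then $t(n)\cdot_{\K_1^Y}t(m) = \Phi^X_n(t(m))$, which bears no relation to $t(\Phi^X_n(m))$; this is exactly why the proposition does not follow from the applicative-morphism fact noted in the footnote above. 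What is needed is to make $F$ ``transparent'' about its own coding, and the self-reference required for that is provided by the recursion theorem.

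Concretely, since $X\leqT Y$ I would first fix an oracle index $i$ with $\Phi^Y_i = \chi_X$. By the relativized $S$-$m$-$n$ theorem there is a total computable function $\rho(e,n)$, which may be taken injective by padding, such that for all $e,n,v$,
\begin{align*}
\Phi^Y_{\rho(e,n)}(v) \simeq \Phi_e(k)\qquad\text{whenever $m$ is least with $\Phi_e(m)=v$ and $\Phi^X_n(m)\da = k$,}
\end{align*}
and $\Phi^Y_{\rho(e,n)}(v)\ua$ otherwise; here the witness $m$ is sought by computing $\Phi_e(0),\Phi_e(1),\dots$ in turn, and $\Phi^X_n(m)$ is evaluated by running machine $n$ on $m$ and answering each query ``$j\in X$?'' with $\Phi^Y_i(j)$. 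Applying the recursion theorem (with parameters) yields an index $c$ with $\Phi_c(n) = \rho(c,n)$ for all $n$; since $\rho$ is total and injective, $F := \Phi_c$ is a total computable injection $\omega\imp\omega$.

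It then remains to check the two embedding conditions, which I expect to be routine. Suppose $\Phi^X_n(m)\da = k$, i.e.\ $(n\cdot_{\K_1^X}m)\da = k$. Inside the computation $\Phi^Y_{F(n)}(F(m)) = \Phi^Y_{\rho(c,n)}(F(m))$, the search for a preimage of $F(m)$ under $\Phi_c = F$ halts and, by injectivity of $F$, returns exactly $m$; the simulated run of $\Phi^X_n(m)$ then halts with output $k$, because $\Phi^Y_i$ answers every oracle query correctly; hence $\Phi^Y_{F(n)}(F(m)) = \Phi_c(k) = F(k)$, that is, $(F(n)\cdot_{\K_1^Y}F(m))\da = F(k)$. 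If instead $\Phi^X_n(m)\ua$, the simulated run diverges, so $\Phi^Y_{F(n)}(F(m))\ua$. (On inputs outside $\ran(F)$ the initial search diverges, but the embedding conditions impose nothing on such inputs.) This proves the first assertion. For the second, the same construction, with ``simulate $\Phi^X_n(m)$ via $\Phi^Y_i$'' replaced by ``run the ordinary computation $\Phi_n(m)$'' and with oracle $X$ throughout and no use of $i$, yields a computable embedding of $\K_1$ into $\K_1^X$; equivalently, this is the case $X=\emptyset$ of the first assertion, since the same method shows $\K_1$ is computably isomorphic to $\K_1^\emptyset$.
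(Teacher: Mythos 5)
Your proof is correct, and the central idea---use the recursion theorem so that $F$'s own coding is available inside the computation, letting you both decode the argument and re-encode the result---is the same as the paper's. The implementations differ, though, in a way worth noting.

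The paper applies the (relativized) recursion theorem to the oracle-machine index: it fixes $e$ so that $\Phi^Y_e(a,x)$ searches for $b$ with $S^1_1(e,b)=x$, simulates $\Phi^X_a(b)$, and outputs $S^1_1(e,\Phi^X_a(b))$; it then declares $F(a) = S^1_1(e,a)$, where $S^1_1$ is the concrete injective function from S-m-n. You instead apply the plain recursion theorem directly to get a fixed point $c$ with $\Phi_c(n)=\rho(c,n)$ and take $F=\Phi_c$ itself to be the fixed point. The two constructions are dual: in both, $\Phi^Y_{F(n)}$ searches for a preimage of its argument under $F$ and re-applies $F$ at the end, but the paper's encoding is $S^1_1(e,\cdot)$ (injective by construction, and computable independently of the recursion-theorem fixed point), whereas yours is $\Phi_c$ (injective only because you chose $\rho$ injective, and total only because $\rho$ is total). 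Your version leans harder on the totality of $\Phi_c$: the search ``compute $\Phi_c(0),\Phi_c(1),\dots$ until one equals $v$'' would hang for a nontotal $\Phi_c$, and one should also note that the output step $\Phi_c(k)$ always halts for the same reason. Neither point is a gap given what you state, but they are places where the paper's choice of $F=S^1_1(e,\cdot)$ is a bit more robust, since $S^1_1$ is total and injective by fiat rather than by arrangement. One small inaccuracy: the final remark that the $\K_1\imp\K_1^X$ case ``is the case $X=\emptyset$'' is not quite right, since the paper explicitly distinguishes $\K_1$ from $\K_1^\emptyset$ as different pca's; your first formulation (replace the $Y$-simulation of $\Phi^X_n$ by a direct run of $\Phi_n$) is the correct one, and the isomorphism claim is an unnecessary and unjustified aside.
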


\begin{proof}
Let $X \leqT Y$.  We show that $\K_1^X$ embeds into $\K_1^Y$ via a computable embedding.  Let $S^1_1(e,y)$ be the injective computable function given by the relativized S-m-n theorem (see~\cite{SoareBookRE}*{Theorem~III.1.5}), where for every $e, y, z \in \omega$ and every $A \subseteq \omega$
\begin{align*}
\Phi_{S^1_1(e, y)}^A(z) \simeq \Phi_e^A(y, z).
\end{align*}

We define an injective computable mapping  $F \colon \omega \imp \omega$ so that for all $a, b, c$:
\begin{align*}
(a \cdot_{\K_1^X} b)\da = c \quad&\Longrightarrow\quad (F(a) \cdot_{\K_1^Y} F(b))\da = F(c) \\
\intertext{and}
(a \cdot_{\K_1^X} b)\ua \quad&\Longrightarrow\quad (F(a) \cdot_{\K_1^Y} F(b))\ua.
\end{align*}
Using the recursion theorem and the fact that computations relative to $X$ can be 
uniformly simulated by computations relative to $Y$, 
we can define an index $e$ such that
\begin{align*}
\Phi_e^Y(a, x) \simeq
\begin{cases}
S^1_1(e, \Phi_a^X(b)) & \text{if $\exists b(S^1_1(e, b) = x \andd \Phi_a^X(b)\da)$}\\
\ua & \text{otherwise}.
\end{cases}
\end{align*}
Define $F$ by $F(a) = S^1_1(e, a)$.  Note that $F$ is injective and computable because $S^1_1$ is injective and computable.

For every $a$ and $b$, we have that
\begin{align*}
F(a) \cdot_{\K_1^Y} F(b) &\simeq \Phi_{F(a)}^Y(F(b)) \simeq \Phi_{S^1_1(e, a)}^Y(S^1_1(e, b)) \simeq \Phi_e^Y(a, S^1_1(e,b)) \\ \\
&\simeq
\begin{cases}
S^1_1(e, \Phi_a^X(b)) & \text{if $\Phi_a^X(b)\da$}\\
\ua &\text{otherwise}
\end{cases} \\ \\
&\simeq
\begin{cases}
F(\Phi_a^X(b)) & \text{if $\Phi_a^X(b)\da$}\\
\ua &\text{otherwise.}
\end{cases}
\end{align*}
Therefore, if $(a \cdot_{\K_1^X} b)\da = c$, this means that $\Phi_a^X(b)\da = c$, in which case $(F(a) \cdot_{\K_1^Y} F(b))\da = F(c)$.  Conversely, if $(a \cdot_{\K_1^X} b)\ua$, this means that $\Phi_a^X(b)\ua$, in which case $(F(a) \cdot_{\K_1^Y} F(b))\ua$ as well.  So $F$ is the desired embedding.

Embedding $\K_1$ into $\K_1^X$ via a computable embedding can be done similarly because oracle Turing machines can uniformly simulate plain Turing machines.
\end{proof}

For an element $a$ of a pca $\A$ and an $n \in \omega$, let $a^n = a \cdot a \cdots a$ denote the $n$-fold application of $a$ to itself.  When $n = 0$, we take $a^0$ to be the element $I = SKK$ of $\A$, which represents the identity function.

\begin{lemma}\label{lem-EmbedComputeHelper}
Let $F \colon \K_1^X \imp \K_1^Y$ be an embedding of $\K_1^X$ into $\K_1^Y$ for 
some $X, Y \subseteq \omega$.  Then there is an index $e \in \K_1^X$ such that 
$F$ is determined by $F(e)$ and $F(0)$.  Specifically, there is an 
index $e$ such that $F(n) = F(e)^n \cdot_{\K_1^Y} F(0)$ for every $n$.
\end{lemma}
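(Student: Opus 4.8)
The plan is to build $e$ so that iterating application of $e$ to itself and then to $0$ simply counts: I will arrange that, computed inside $\K_1^X$, $e^n$ is defined for every $n \geq 1$, every application occurring in $e^n \cdot 0$ is defined, and $e^n \cdot_{\K_1^X} 0 = n$. Granting this, the lemma follows by transporting equalities along $F$. First, a routine induction on $n \geq 1$ using only the ``preserves defined applications'' clause of an embedding shows $F(e^n) = F(e)^n$: the base case is $F(e^1) = F(e) = F(e)^1$, and for the step one uses that $e^{n+1} = e^n \cdot_{\K_1^X} e$ is defined, so $F(e^n) \cdot_{\K_1^Y} F(e) = F(e^{n+1})$, i.e.\ $F(e)^n \cdot_{\K_1^Y} F(e) = F(e^{n+1})$, which is $F(e)^{n+1} = F(e^{n+1})$. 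Then for $n \geq 1$, applying $F$ to $e^n \cdot_{\K_1^X} 0 = n$ yields $F(e)^n \cdot_{\K_1^Y} F(0) = F(e^n) \cdot_{\K_1^Y} F(0) = F(n)$; and for $n = 0$ we have $F(e)^0 \cdot_{\K_1^Y} F(0) = I \cdot_{\K_1^Y} F(0) = F(0)$ since $I = SKK$ satisfies $I a \da = a$ in every pca.

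It remains to construct $e$. The idea is to use the recursion theorem to get a total computable function $p$ whose values $p(1), p(2), \dots$ are indices behaving like ``the number $n$ carrying a pending increment'': $\Phi^X_{p(n)}(0) = n$ while $\Phi^X_{p(n)}(x) = p(n+1)$ for $x \neq 0$. Concretely, by the $S$-$m$-$n$ theorem there is a total computable $h(i,n)$ with $\Phi^X_{h(i,n)}(x) \simeq \Phi^X_i(n+1)$ when $x \neq 0$ and $\Phi^X_{h(i,n)}(0) \simeq n$; padding lets us assume $h(i,n) \geq 1$ always. By the recursion theorem there is a $j$ with $\Phi^X_j(n) = h(j,n)$ for all $n$; set $p(n) = h(j,n)$ and $e = p(1)$. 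Then $p$ is total computable, $p(n) \geq 1$ so $e \neq 0$, $\Phi^X_{p(n)}(0) = n$, and $\Phi^X_{p(n)}(e) = p(n+1)$ since $e \neq 0$. Induction on $n \geq 1$ now gives $e^n = p(n)$: indeed $e^1 = e = p(1)$, and $e^{n+1} = e^n \cdot_{\K_1^X} e = \Phi^X_{p(n)}(e) = p(n+1)$, which is defined. Hence $e^n \cdot_{\K_1^X} 0 = \Phi^X_{p(n)}(0) = n$ and every application in $e^n \cdot 0$ is defined, as needed above.

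The point requiring care is that application does not commute or reassociate, so one cannot simply take $e$ with $e \cdot_{\K_1^X} n = n+1$ and hope that $F(e)^n \cdot_{\K_1^Y} F(0) = F(n)$: since $a^{n+1} = a^n \cdot a$ attaches the new factor on the right, $F(e)^{n+1} \cdot F(0)$ unfolds as $(F(e)^n \cdot F(e)) \cdot F(0)$, not as $F(e) \cdot (F(e)^n \cdot F(0))$. Counting through $e^n \cdot 0$ rather than through $e$ acting on $0, 1, 2, \dots$ is precisely what avoids this; the remaining obligations — that $e \neq 0$, that each application in $e^n \cdot 0$ is genuinely defined so the embedding clauses apply, and the identity $I a = a$ in $\K_1^Y$ — are routine.
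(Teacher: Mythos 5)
Your proposal is correct and takes essentially the same approach as the paper: both build a ``counter'' family of indices $p(1), p(2), \dots$ via the relativized S-m-n theorem together with the recursion theorem so that $p(n)\cdot 0 = n$ and $p(n)\cdot x = p(n+1)$ for $x\neq 0$, set $e = p(1)$, and prove $e^n = p(n)$ by induction. If anything you are slightly more explicit than the paper about transporting $e^n\cdot 0 = n$ along $F$ and about handling $n=0$ via $I\cdot a = a$.
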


\begin{proof}
We show that there is an index $e \in \K_1^X$ such that 
$n = e^n \cdot_{\K_1^X} 0$ for every $n$.  
It follows that $F(n) = F(e)^n \cdot_{\K_1^Y} F(0)$ for every $n$ 
because $F$ is an embedding.

Again we make use of the function $S^1_1(e,y)$ from the relativized S-m-n theorem.  By padding, we may assume that $0$ is not in the range of $S^1_1$.  By the recursion theorem, let $d$ be an index such that 
\begin{align*}
\Phi_d^X(n,x) =
\begin{cases}
S^1_1(d,n+1) & \text{if $x>0$} \\
n & \text{if $x=0$}
\end{cases}
\end{align*}
and define $f(n) = S^1_1(d,n)$. Then 
\begin{align} \label{f}
\Phi_{f(n)}^X(x) =
\begin{cases}
f(n+1) & \text{if $x>0$} \\
n & \text{if $x=0$}
\end{cases}
\end{align}
for every $n$ and $x$.
Let $e = f(1)$, and note that $e \neq 0$.  By induction we have that 
$e^n = f(n)$ for every $n\geq 1$.  For $n=1$ this is by the definition of~$e$, 
and then $e^{n+1} = e^n \cdot e = f(n) \cdot e = f(n+1)$ by the
induction hypothesis and \eqref{f}. 
Hence $e^n \cdot 0 = n$ for every $n$.  This is by definition when $n = 0$, and it is because $e^n \cdot 0 = f(n) \cdot 0 = n$ by
\eqref{f} when $n > 0$.
\end{proof}

\begin{theorem} \label{thm:iff}
Let $X, Y \subseteq \omega$.  Then $\K_1^X$ embeds into $\K_1^Y$ if and only if $X \leqT Y$, in which case $\K_1^X$ embeds into $\K_1^Y$ via a computable embedding.
\end{theorem}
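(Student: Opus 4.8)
The plan is to handle the two implications separately. The direction ``$X \leqT Y$ implies $\K_1^X$ embeds into $\K_1^Y$ via a computable embedding'' is precisely Proposition~\ref{prop-EmbedK1Rel}, so the only new work is the converse: assuming there is \emph{some} embedding $F \colon \K_1^X \imp \K_1^Y$, show $X \leqT Y$.

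The key first observation is that any such $F$ is automatically computable from $Y$. Indeed, Lemma~\ref{lem-EmbedComputeHelper} provides an index $e \in \K_1^X$ with $F(n) = F(e)^n \cdot_{\K_1^Y} F(0)$ for every $n$; writing $b = F(e)$ and $a = F(0)$ (two fixed natural numbers), the map $n \mapsto b^n \cdot_{\K_1^Y} a$, where $b^n$ is the $n$-fold self-application in $\K_1^Y$, is uniformly computable from $Y$, and since $F$ is total this map \emph{is} $F$. Now fix an index $c$ with $\Phi_c^X(n) = \chi_X(n)$ for all $n$, so that $c \cdot_{\K_1^X} n \da = \chi_X(n) \in \{0,1\}$; such a $c$ exists because $\chi_X$ is total and $X$-computable. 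Since $c \cdot_{\K_1^X} n$ is always defined, the embedding clause of Definition~\ref{def-embedding} gives $F(c) \cdot_{\K_1^Y} F(n) \da = F(c \cdot_{\K_1^X} n) = F(\chi_X(n))$, which is $F(0)$ when $n \notin X$ and $F(1)$ when $n \in X$. Given $n$, we compute (using $Y$) the numbers $F(c)$, $F(n)$, and hence the convergent value $F(c) \cdot_{\K_1^Y} F(n) = \Phi_{F(c)}^Y(F(n))$, as well as $F(0)$ and $F(1)$; as $F$ is injective, $F(0) \neq F(1)$, so we can read off which of the two has been obtained and thereby decide whether $n \in X$. Hence $X \leqT Y$, and together with Proposition~\ref{prop-EmbedK1Rel} this proves the theorem.

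The main obstacle is the first step: a priori an embedding is just some injection $\omega \imp \omega$ and carries no obvious computational bound, so Lemma~\ref{lem-EmbedComputeHelper} is essential in pinning it down to a $Y$-computable function. Once that is available the rest is routine, the only points requiring a little care being the choice of a \emph{total} index $c$ (so that the relevant $\K_1^Y$-application is guaranteed to converge) and the use of injectivity of $F$ to separate $F(0)$ from $F(1)$.
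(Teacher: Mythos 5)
Your proof is correct and takes essentially the same approach as the paper: both directions use the same two pillars — Proposition~\ref{prop-EmbedK1Rel} for the easy direction and Lemma~\ref{lem-EmbedComputeHelper} to get $F \leqT Y$, then an index $c$ for $\chi_X$ is pushed through the embedding to decide $X$ from $Y$. Your write-up is a bit more explicit than the paper's (spelling out that injectivity of $F$ makes $F(0) \neq F(1)$, which the paper leaves implicit in ``because $F \leqT Y$''), but this is the same argument.
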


\begin{proof}
Let $X, Y \subseteq \omega$.  If $X \leqT Y$, then $\K_1^X$ embeds into $\K_1^Y$ via a computable embedding by Proposition~\ref{prop-EmbedK1Rel}.

Conversely, suppose that $\K_1^X$ embeds into $\K_1^Y$, and let $F$ be an embedding.  
Let $e \in \K_1^X$ be as in Lemma~\ref{lem-EmbedComputeHelper}, 
so that $F(n) = F(e)^n \cdot_{\K_1^Y} F(0)$ for every $n$.  
Write $a = F(e)$ and $b = F(0)$.  
Notice then that $F \leqT Y$ via the computation 
$n \mapsto a^n \cdot_{\K_1^Y} b$.

The characteristic function of $X$ is $X$-computable, 
so there is an index $c \in \K_1^X$ such that for every $n$,
\begin{align*}
c \cdot_{\K_1^X} n =
\begin{cases}
0 & \text{if $n \notin X$} \\
1 & \text{if $n \in X$}.
\end{cases}
\end{align*}
By applying $F$, we obtain an index $d = F(c)$ in $\K_1^Y$ such that for every $n$,
\begin{align*}
d \cdot_{\K_1^Y} F(n) =
\begin{cases}
F(0) & \text{if $n \notin X$} \\
F(1) & \text{if $n \in X$}.
\end{cases}
\end{align*}
It follows that $X \leqT Y$ because $F \leqT Y$.
\end{proof}

We now show that, for every $X \subseteq \omega$, $\K_1^X$ embeds into $\K_2$ in a way that preserves $\sim_\alpha$ for every ordinal $\alpha$.  An embedding $F \colon \A \imp \B$ of a pca $\A$ into a pca $\B$ extends to map from the closed terms over $\A$ to the closed terms over $\B$ by setting $F(st) = F(s)F(t)$ for all closed terms $s$ and $t$ of $\A$.  We define an embedding $F \colon \K_1^X \imp \K_2$ so that for every ordinal $\alpha$ and all closed terms $s$ and $t$ of $\K_1^X$, $s \sim_\alpha t$ in $K_1^X$ if and only if $F(s) \sim_\alpha F(t)$ in $\K_2$.

\begin{theorem}\label{thm-K1K2}
For every $X \subseteq \omega$, there is an $X$-computable embedding $F \colon \K_1^X \imp \K_2$ such that for all ordinals $\alpha$ and all closed terms $s$ and $t$ of $\K_1^X$, $s \sim_\alpha t$ in $K_1^X$ if and only if $F(s) \sim_\alpha F(t)$ in $\K_2$.  The analogous statement with $\K_1$ in place of $\K_1^X$ holds as well.
\end{theorem}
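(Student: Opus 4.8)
The plan is to build an $X$-computable injection $F\colon\omega\imp\omega^\omega$ that simulates the application of $\K_1^X$ as tightly as possible, and then to prove $\sim_\alpha$-preservation by transfinite induction on $\alpha$. For concreteness I would work with the coding \eqref{alt} of $\K_2$, so that $f\cdot g=\Phi^{f\oplus g}_{f(0)}$; the argument adapts to the standard coding, but \eqref{alt} makes the oracle bookkeeping transparent.

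Using the recursion theorem, fix an index $d$ and define $F(n)$ to be the function with $F(n)(0)=d$, $F(n)(1)=n$, and $F(n)(k+2)=X(k)$ for all $k$. Then $F$ is $X$-computable and injective (coordinate $1$ recovers $n$), and its range is exactly $R=\{h\colon h(0)=d \text{ and } (\forall k)\,h(k+2)=X(k)\}$. The index $d$ is chosen so that $\Phi_d^{\,F(n)\oplus h}$, on input $i$, first recovers $n$, then $m=h(1)$, and $X$ itself from the \emph{left} half $F(n)$ of the oracle (this is the reliable copy, present regardless of $h$); it then simulates $\Phi_n^X(m)$ to completion, and, if this halts with value $p$, it checks whether $h(0)=d$ and $h(k+2)=X(k)$ for all $k\leq i$; if all these checks pass it outputs the $i$-th value of the function coding $p$ (namely $d$, $p$, or $X(i-2)$ according as $i=0$, $i=1$, or $i\geq 2$), and otherwise it diverges. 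Unwinding the definition of $\cdot$ in \eqref{alt}, this gives two properties. First, $F(n)\cdot_{\K_2}F(m)\simeq F\bigl(\Phi_n^X(m)\bigr)$ for all $n,m$: when $h=F(m)\in R$ every coordinate check succeeds, so the output function is total and equals the code of $p$ exactly when $\Phi_n^X(m)\da=p$. Second, $F(n)\cdot_{\K_2}h\ua$ whenever $h\notin R$: then either $h(0)\neq d$, or some $h(k+2)\neq X(k)$, and in the latter case the check fails on all inputs $i\geq k$, so the output function is never total (and if the inner simulation itself diverges the output is non-total a fortiori). The first property already shows that $F$ is an embedding in the sense of Definition~\ref{def-embedding}.

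Extend $F$ to closed terms by $F(st)=F(s)F(t)$. A routine induction on the structure of closed terms, using the first property above together with the injectivity of $F$, shows that $s\da=c\Biimp F(s)\da=F(c)$ and $s\ua\Biimp F(s)\ua$ for every closed term $s$ over $\K_1^X$; in particular $s\sim_0 t\Biimp F(s)\sim_0 F(t)$. The heart of the proof is then the transfinite induction establishing $s\sim_\alpha t\Biimp F(s)\sim_\alpha F(t)$ for all closed terms $s,t$. The base case is the observation just made, and the limit case is immediate from the induction hypothesis and the definition of $\sim_\alpha$ at limits. For the successor case, note that $F(s)$ either diverges or evaluates to some element $F(n_0)\in R$ (and likewise $F(t)$); together with the second property above this forces $F(s)\cdot h\simeq F(t)\cdot h$ — both undefined — for every $h\notin R$, and hence $F(s)\cdot h\sim_\alpha F(t)\cdot h$ for such $h$ by Theorem~\ref{thm:basic}. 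Therefore $F(s)\sim_{\alpha+1}F(t)$ is equivalent to $(\forall n)\,F(s)F(n)\sim_\alpha F(t)F(n)$, i.e.\ to $(\forall n)\,F(sn)\sim_\alpha F(tn)$, i.e.\ by the induction hypothesis to $(\forall n)\,sn\sim_\alpha tn$, i.e.\ to $s\sim_{\alpha+1}t$. Letting $\alpha$ range over all ordinals finishes the case of $\K_1^X$; the statement for $\K_1$ is proved by the same construction carried out with ordinary Turing machines and no oracle content encoded into $F$.

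The step I expect to be the real obstacle is securing the second property: the self-referential test "$h(0)=d$ and $h(k+2)=X(k)$", built into $d$ via the recursion theorem, is exactly what makes $F(n)\cdot h$ diverge for every $h$ that is not literally one of the functions in the range of $F$. Without it, quantifying over all of $\omega^\omega$ in the definition of $\sim_{\alpha+1}$ in $\K_2$ would be strictly stronger than quantifying over $\omega$ in $\K_1^X$, and the successor step of the induction would fail. A related subtlety that must be handled carefully is that $X$ has to be read from the reliable left half $F(n)$ of the oracle $F(n)\oplus h$, so that the simulation of $\Phi_n^X(m)$ is correct even when $h$ lies outside the range of $F$.
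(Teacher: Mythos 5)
Your proof is correct, but it achieves the crucial reduction of the quantification over $\omega^\omega$ to a quantification over $\omega$ by a genuinely different device than the paper's. The paper builds $F(a)=f_a$ so that $f_a \cdot g$ depends \emph{only} on $g(0)$ --- concretely $f_a \cdot g \sim_0 f_a \cdot F(g(0))$ for every $g\in\K_2$ --- so an arbitrary $g$ is indistinguishable, under application by anything in the range of $F$, from the canonical code $F(g(0))$. You instead arrange that $F(a)\cdot h$ \emph{diverges} whenever $h$ lies outside the range $R$ of $F$, so that the $\sim_{\alpha+1}$ clause is vacuous off $R$ and only the countably many $h\in R$ matter. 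The two devices are dual in spirit: the paper collapses all of $\omega^\omega$ onto a countable set of representatives, while you filter out everything except a thin copy of $\omega$ inside $\omega^\omega$; both deliver exactly the equivalence needed in the successor case, and the outer transfinite-induction structure is the same in the two arguments. One point you should not leave implicit: you carry out the construction against the alternative coding \eqref{alt} of $\K_2$, whereas the theorem concerns the standard $\K_2$. Your remark that the argument adapts is right --- the functional you implicitly define (output $F(\Phi_n^X(h(1)))$ when $h\in R$ and $\Phi_n^X(h(1))\da$, diverge otherwise) is partial continuous, its domain is a closed (hence $G_\delta$) subset of $\omega^\omega$, and it is uniformly $X$-computable --- but producing a uniformly $X$-computable sequence of \emph{codes} for it in the standard sense requires a finite-approximation construction of the same kind the paper carries out for its own $f_n$, so that step is not entirely free and should be sketched rather than asserted.
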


\begin{proof}
We define an $X$-computable sequence $(f_n : n \in \omega)$ of elements of $\K_2$ with $\la n \ra \sqsubseteq f_n$ for each $n$ such that for all $a, b, c \in \K_1^X$:
\begin{align*}
a \cdot_{\K_1^X} b = c \quad&\Longrightarrow\quad \forall g \sqsupseteq \la b \ra \; (f_a \cdot_{\K_2} g = f_c) \\
(a \cdot_{\K_1^X} b)\ua \quad&\Longrightarrow\quad \forall g \sqsupseteq \la b \ra \; (f_a \cdot_{\K_2} g)\ua.
\end{align*}
It follows that the map $F(a) = f_a$ embeds $\K_1^X$ into $\K_2$ because $f_b \sqsupseteq \la b \ra$ for every $b$.

It suffices to think of an element of $\K_2$ as a code for a total map $\omega^{<\omega} \imp \omega^{<\omega}$, where if $\sigma_0 \mapsto \tau_0$, $\sigma_1 \mapsto \tau_1$, and $\sigma_0 \sqsubseteq \sigma_1$, then also $\tau_0 \sqsubseteq \tau_1$.  If $f$ codes such a map, then $f \cdot_{\K_2} g = h$ if for every $n$ there are a $\sigma \sqsubseteq g$ and a $\tau \sqsubseteq h$ with $|\tau| \geq n$ such that $\sigma \mapsto \tau$.  If there is no such $h$ for a given $g$, then $(f \cdot_{\K_2} g) \ua$.  In the same way, finite strings are thought of as coding finite partial maps on strings.  The only necessary detail from the encoding of $\K_2$ from~\cite{LongleyNormann}*{Section~3.2.1} we need is that if we assume $0$ codes the empty string $\epsilon$, then the partial map coded by $\la n \ra$ is empty for every $n$. 

We define an $X$-computable sequence of finite strings $(f_{n,s} : n, s \in \omega)$ so that for each $n$, $\la n \ra = f_{n, 0} \sqsubseteq f_{n, 1} \sqsubseteq f_{n, 2} \sqsubseteq \cdots$.  In the end, $f_n = \bigcup_s f_{n, s}$ is the desired $f_n$ for each $n$.  Start with $f_{n, 0} = \la n \ra$ for each $n$.  At stage $s+1$, consider each $a \leq s$ and each string $\tau$ with code $\leq s$ that is not yet in the domain of the partial map coded by $f_{a, s}$.  If there are $b, c \leq s$ with $\tau \sqsupseteq \la b \ra$ and $\Phi_{a,s}^X(b) \da = c$, then extend $f_{a, s}$ so as to map $\tau$ to $f_{c,s}$.  Otherwise, extend $f_{a, s}$ so as to map $\tau$ to $\epsilon$.  Let $f_{a, s+1}$ be the result of these extensions.  Also, let $f_{n, s+1} = f_{n, s}$ for all $n > s$.

Suppose that $a \cdot_{\K_1^X} b = c$, and let $s \geq a, b, c$ be large enough so that $\Phi_{a, s}^X(b) = c$.  Then at every stage $t +1 > s$, $f_{a, t}$ is extended to map $\tau$ to $f_{c, t}$ whenever $\tau \sqsupseteq \la b \ra$, $\tau \leq t$, and $\tau$ is not already in the domain of the partial map coded by $f_{a,t}$.  Therefore $f_a \cdot_{\K_2} g = f_c$ for all $g \sqsupseteq \la b \ra$.

Conversely, suppose that $(a \cdot_{\K_1^X} b)\ua$.  Then for every $s$, $f_{a,s}$ maps $\tau$ to $\epsilon$ whenever $\tau \sqsupseteq \la b \ra$ is in the domain of the partial function coded by $f_{a,s}$.  Thus $(f_a \cdot_{\K_2} g)\ua$ for all $g \sqsupseteq \la b \ra$.

We now show that the embedding $F$ preserves $\sim_\alpha$.  First, we have that for every closed term $t$ over $\K_1$ and every $b, c \in \K_1^X$:
\begin{align*}
t \cdot_{\K_1^X} b = c \quad&\Longrightarrow\quad \forall g \sqsupseteq \la b \ra \; (F(t) \cdot_{\K_2} g = F(c)) \\
(t \cdot_{\K_1^X} b)\ua \quad&\Longrightarrow\quad \forall g \sqsupseteq \la b \ra \; (F(t) \cdot_{\K_2} g)\ua.
\end{align*}
This is because if $t\da = a$ then $F(t)\da = F(a)$, so the implications for $t$ follow from the corresponding implications for $a$.  On the other hand, if $t \ua$, then $F(t) \ua$ as well, from which the implications follow.  Therefore, if $t$ is a closed term over $\K_1^X$ and $g \in \K_2$, then $F(t) \cdot_{\K_2} g \sim_0 F(t) \cdot_{\K_2} F(b)$, where $b = g(0)$.

Now proceed by transfinite induction on $\alpha$.  For the base case, we have that $s \sim_0 t$ in $\K_1^X$ if and only if $F(s) \sim_0 F(t)$ in $\K_2$ for all closed terms $s$ and $t$ of $\K_1^X$ because $F$ is an embedding.

For the successor case, assume that $s \sim_\alpha t$ in $\K_1^X$ if and only if $F(s) \sim_\alpha F(t)$ in $\K_2$ for all closed terms $s$ and $t$ of $\K_1^X$.  Now consider two closed terms $s$ and $t$ of $\K_1^X$.  Suppose that $s \sim_{\alpha + 1} t$ in $\K_1^X$.  We want to show that $F(s) \sim_{\alpha + 1} F(t)$ in $\K_2$.  Let $g \in \K_2$, and let $b = g(0)$.  Then $s b \sim_\alpha t b$ because $s \sim_{\alpha + 1} t$, so therefore $F(s b) \sim_\alpha F(t b)$.  Thus,
\begin{align*}
F(s) \cdot_{\K_2} g \sim_0 F(s) \cdot_{\K_2} F(b) \sim_0 F(s b) \sim_\alpha F(t b) \sim_0 F(t) \cdot_{\K_2} F(b) \sim_0 F(t) \cdot_{\K_2} g.
\end{align*}
Thus $F(s) \cdot_{\K_2} g \sim_\alpha F(t) \cdot_{\K_2} g$ for every $g \in \K_2$, so $F(s) \sim_{\alpha + 1} F(t)$.  Conversely, suppose that $s \not\sim_{\alpha + 1} t$ in $\K_1^X$.  Then there is a $b \in \K_1^X$ such that $s b \not\sim_\alpha t b$.  Then $F(s b) \not\sim_\alpha F(t b)$ by the induction hypothesis, so
\begin{align*}
F(s) \cdot_{\K_2} F(b) \sim_0 F(s b) \not\sim_\alpha F(t b) \sim_0 F(t) \cdot_{\K_2} F(b).
\end{align*}
Thus $F(s) \not\sim_{\alpha + 1} F(t)$.

The limit case follows easily from the inductive hypothesis, which completes the proof.

The proof for $\K_1$ in place of $\K_1^X$ is the same, with plain Turing machines in place of oracle Turing machines.
\end{proof}

\begin{definition}
For a pca $\A$ and closed terms $s$ and $t$ over $\A$, let
\begin{align*}
\ord_\A(s, t) =
\begin{cases}
\text{the least ordinal $\alpha$ such that $s \sim_\alpha t$} & \text{if $s \approx t$} \\
\infty & \text{if $s \napprox t$}.
\end{cases}
\end{align*}
\end{definition}
Here $\infty$ is a new symbol, and we define $\alpha < \infty$ for every ordinal $\alpha$.  Notice that 
\begin{align*}
\ord(\A) = \sup\{\ord_\A(s, t) : \text{$s$ and $t$ are closed terms over $\A$ with $\ord_\A(s, t) < \infty$}\},
\end{align*}
by the discussion following Definition~\ref{def-OrdA}.  Furthermore, if $\A$ is not extensional, then
\begin{align*}
\ord(\A) = \sup\{\ord_\A(a, b) : a, b \in \A \andd \ord_\A(a, b) < \infty\}.
\end{align*}

If $\A$ is a sub-pca of $\B$, then $\ord(\A) \leq \ord(\B)$, provided that $\ord_\B(s, t) < \infty$ whenever $s$ and $t$ are closed terms over $\A$ with $\ord_\A(s, t) < \infty$.

\begin{proposition}\label{prop-OrdSubPCA}
Let $\A$ and $\B$ be pca's with $\A$ a sub-pca of $\B$.
\begin{enumerate}[(i)]
\item\label{it-OrdBoundTerm} For all closed terms $s$ and $t$ over $\A$, $\ord_\A(s,t) \leq \ord_\B(s,t)$.

\item\label{it-OrdBoundPCA} If $\ord_\A(s,t) < \infty \Longrightarrow \ord_\B(s,t) < \infty$ for all closed terms $s$ and $t$ over $\A$, then $\ord(\A) \leq \ord(\B)$.

\item\label{it-OrdBoundPCANonExt} If $\A$ and $\B$ are non-extensional and $\ord_\A(a,b) < \infty \Longrightarrow \ord_\B(a,b) < \infty$ for all $a, b \in \A$, then $\ord(\A) \leq \ord(\B)$.
\end{enumerate}
\end{proposition}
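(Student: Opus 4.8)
The plan is to first isolate the fact that carries all the weight: \emph{if $\A$ is a sub-pca of $\B$, then for every ordinal $\alpha$ and all closed terms $s, t$ over $\A$, $s \sim_\alpha t$ in $\B$ implies $s \sim_\alpha t$ in $\A$.} I would prove this by transfinite induction on $\alpha$. For $\alpha = 0$, an easy induction on the structure of terms shows (using that the application of $\A$ is the restriction of the application of $\B$) that a closed term over $\A$ is defined in $\A$ exactly when it is defined in $\B$, with the same value; hence $\sim_0$ is literally the same relation on closed terms over $\A$ whether computed in $\A$ or in $\B$. For a successor $\alpha+1$: if $s \sim_{\alpha+1} t$ in $\B$, then $sx \sim_\alpha tx$ in $\B$ for every $x \in \A$ (restricting the quantifier to fewer elements), and since $sx, tx$ are closed terms over $\A$ the induction hypothesis yields $sx \sim_\alpha tx$ in $\A$ for all $x \in \A$, i.e.\ $s \sim_{\alpha+1} t$ in $\A$. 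For a limit $\alpha$: from $s \sim_\beta t$ in $\B$ for some $\beta < \alpha$ the induction hypothesis gives $s \sim_\beta t$ in $\A$, hence $s \sim_\alpha t$ in $\A$.

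Part~\ref{it-OrdBoundTerm} is then immediate. If $\ord_\B(s,t) = \infty$ there is nothing to show; otherwise $\ord_\B(s,t)$ is an ordinal $\alpha$ with $s \sim_\alpha t$ in $\B$, so $s \sim_\alpha t$ in $\A$ by the fact above, whence $\ord_\A(s,t) \leq \alpha = \ord_\B(s,t)$.

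For part~\ref{it-OrdBoundPCA}, use the formula $\ord(\A) = \sup\{\ord_\A(s,t) : s, t \text{ closed terms over } \A,\ \ord_\A(s,t) < \infty\}$ recalled just above the proposition (and likewise for $\B$). For any closed terms $s, t$ over $\A$ with $\ord_\A(s,t) < \infty$, by hypothesis $\ord_\B(s,t) < \infty$, by part~\ref{it-OrdBoundTerm} $\ord_\A(s,t) \leq \ord_\B(s,t)$, and since $\A \subseteq \B$ the terms $s, t$ are also closed terms over $\B$, so $\ord_\B(s,t) \leq \ord(\B)$; taking the supremum over all such pairs gives $\ord(\A) \leq \ord(\B)$. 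Part~\ref{it-OrdBoundPCANonExt} is the same argument, now using the formula $\ord(\A) = \sup\{\ord_\A(a,b) : a, b \in \A,\ \ord_\A(a,b) < \infty\}$, which is available because $\A$ is non-extensional, together with the fact that elements of $\A$ are elements of $\B$.

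The only genuinely delicate point is keeping the direction straight in the key fact: enlarging the pca can only separate more terms, so $\sim_\alpha$-equivalence transfers \emph{downward}, from $\B$ to $\A$, not upward. Given that, the rest is just a supremum chase through the formulas for $\ord$ already established in the text.
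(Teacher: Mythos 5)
Your proposal is correct and follows essentially the same approach as the paper: both isolate the key lemma that, for closed terms over $\A$, $s \sim_\alpha t$ in $\B$ implies $s \sim_\alpha t$ in $\A$ (proved by transfinite induction on $\alpha$, with the base case resting on the fact that the application of $\A$ is the restriction of that of $\B$), deduce part~\ref{it-OrdBoundTerm} immediately, and obtain parts~\ref{it-OrdBoundPCA} and~\ref{it-OrdBoundPCANonExt} by a supremum comparison using the formulas for $\ord(\A)$ stated just before the proposition.
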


\begin{proof}
For~\ref{it-OrdBoundTerm}, a straightforward transfinite induction on $\alpha$ shows that for all ordinals $\alpha$ and all closed terms $s$ and $t$ over $\A$, if $s \sim_\alpha t$ in $\B$, then $s \sim_\alpha t$ in $\A$.  Thus for any fixed closed terms $s$ and $t$ over $\A$, either $\ord_\B(s,t) = \infty$, in which case~\ref{it-OrdBoundTerm} trivially holds; or $\ord_\B(s,t) < \infty$, in which case $s \sim_{\ord_\B(s,t)} t$ in both $\A$ and $\B$, so $\ord_\A(s,t) \leq \ord_\B(s, t)$.

For~\ref{it-OrdBoundPCA}, assume that $\ord_\A(s,t) < \infty \Longrightarrow \ord_\B(s,t) < \infty$ for all closed terms $s$ and $t$ over $\A$.  Then by~\ref{it-OrdBoundTerm}:
\begin{align*}
\ord(\A) &= \sup\{\ord_\A(s, t) : \text{$s$ and $t$ are closed terms over $\A$ with $\ord_\A(s, t) < \infty$}\} \\
&\leq \sup\{\ord_\B(s, t) : \text{$s$ and $t$ are closed terms over $\A$ with $\ord_\B(s, t) < \infty$}\} \\
&\leq \sup\{\ord_\B(s, t) : \text{$s$ and $t$ are closed terms over $\B$ with $\ord_\B(s, t) < \infty$}\} \\
&=\ord(\B).
\end{align*}

For \ref{it-OrdBoundPCANonExt}, if $\A$ and $\B$ are non-extensional, then we can give the same argument as in~\ref{it-OrdBoundPCA} but with elements of the algebras in place of more general closed terms.
\end{proof}

\section{Ordinal analysis of $\K_2$} \label{sec:K2}

In this section we use the results of Section~\ref{sec:embeddings} to 
prove that the closure ordinal of Kleene's second model $\K_2$ is 
equal to~$\omega_1$.

\begin{theorem} \label{thm-K2ord}
$\ord(\K_2) = \omega_1$.
\end{theorem}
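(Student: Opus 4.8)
The plan is to prove the two inequalities $\ord(\K_2) \geq \omega_1$ and $\ord(\K_2) \leq \omega_1$ separately. For the lower bound I would use the $\sim_\alpha$-preserving embeddings of $\K_1^X$ into $\K_2$ from Section~\ref{sec:embeddings}; for the upper bound I would run the well-founded tree analysis of Lemma~\ref{lem-EquivChar}, but over $\omega^\omega$ rather than $\omega$, replacing the Kleene--Brouwer step (which requires a countable alphabet) by a boundedness theorem from descriptive set theory.

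For the lower bound, fix $X \subseteq \omega$. Relativizing Theorem~\ref{thm:K1} gives $\ord(\K_1^X) = \omega_1^X$, so for each $\alpha < \omega_1^X$ there are elements $s, t$ of $\K_1^X$ with $s \not\sim_\alpha t$ and $s \sim_{\alpha + 1} t$ in $\K_1^X$. The embedding $F \colon \K_1^X \imp \K_2$ of Theorem~\ref{thm-K1K2} satisfies $s \sim_\beta t \Biimp F(s) \sim_\beta F(t)$ for every ordinal $\beta$, so $F(s) \not\sim_\alpha F(t)$ and $F(s) \sim_{\alpha + 1} F(t)$ in $\K_2$. Thus ${\sim_\alpha} \neq {\sim_{\alpha + 1}}$ in $\K_2$ for every $\alpha < \omega_1^X$, i.e.\ $\ord(\K_2) \geq \omega_1^X$. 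As $X$ is arbitrary and every countable ordinal lies below $\omega_1^X$ for a suitable $X$, this gives $\ord(\K_2) \geq \omega_1$.

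For the upper bound, it is enough to show that whenever $s \approx t$ for closed terms $s, t$ over $\K_2$ there is a countable $\alpha$ with $s \sim_\alpha t$: since $\omega_1$ is a limit ordinal, this yields ${\sim_{\omega_1}} = \bigcup_{\alpha < \omega_1}{\sim_\alpha} = {\approx} = {\sim_{\omega_1 + 1}}$, hence $\ord(\K_2) \leq \omega_1$. Given $s \approx t$, I would form the tree $T_{s,t}$ whose nodes are the finite sequences $(g_0, \dots, g_{n-1})$ of elements of $\K_2$ with $s g_0 \cdots g_{n-1} \nsimeq t g_0 \cdots g_{n-1}$. As in Lemma~\ref{lem-EquivChar} — using that $\sim_{\beta+1}$ quantifies over all arguments and that $\sim_\gamma$ for limit $\gamma$ is the union of the $\sim_\beta$ with $\beta < \gamma$ — one shows $T_{s,t}$ is well-founded, and that if $\rho$ denotes its rank function then $s\sigma \sim_{\rho(\sigma)+1} t\sigma$ for all $\sigma \in T_{s,t}$, so in particular $s \sim_{\rho(\epsilon)+1} t$. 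The point is now that $\rho(\epsilon) < \omega_1$: coding finite sequences of reals by single reals in the standard way, the immediate-successor relation of $T_{s,t}$ becomes a Borel (indeed arithmetical in $s \oplus t$) well-founded relation on $\omega^\omega$, since for each fixed length ``$s\sigma\da$'', ``$t\sigma\da$'' and ``$s\sigma = t\sigma$'' are arithmetical in the $g_i$, uniformly, because $\K_2$-application has an arithmetical convergence predicate and an arithmetically computable output. By the Kunen--Martin theorem such a relation has countable rank, so $\rho(\epsilon) < \omega_1$. Combining the two inequalities yields $\ord(\K_2) = \omega_1$.

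The main obstacle is this final step. Naively $T_{s,t}$ is a well-founded tree on the uncountable set $\omega^\omega$, so its rank could a priori be uncountable; what saves the argument is that $T_{s,t}$ is a \emph{definable} — Borel, even arithmetical — tree, so that the Kunen--Martin boundedness theorem for analytic well-founded relations applies. Carrying this out carefully amounts to verifying the complexity of $T_{s,t}$ uniformly in the lengths of its nodes, which rests on the elementary facts that in $\K_2$ the predicate ``$a \cdot b\da$'' is $\Pi^0_2$ in $a \oplus b$ and the value $a \cdot b$ is computable from $a \oplus b$ once it converges (transparent in, say, the coding~\eqref{alt}). The remaining ingredients — the tree construction, the rank computation, and the assembly of the two bounds — are routine transfinite inductions.
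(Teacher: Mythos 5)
Your lower-bound argument matches the paper's: both push the $\sim_\alpha$-preserving embeddings $F \colon \K_1^X \to \K_2$ of Theorem~\ref{thm-K1K2} to transport witnesses for ${\sim_\alpha} \neq {\sim_{\alpha+1}}$ from $\K_1^X$ into $\K_2$ and then take $\sup_X \omega_1^X = \omega_1$ (the paper routes this through Proposition~\ref{prop-OrdSubPCA}, but that is cosmetic). The upper bound is where you genuinely diverge. The paper defines the monotone operator $\Gamma(X) = \{(s,t) : \forall x\,(sx,tx)\in X\}$ on $\PP(\Omega\times\Omega)$ with $\Omega$ coded by reals, notes that $\Gamma$ is $\Pi^1_1$ and monotone, and invokes Cenzer's theorem that every $\Pi^1_1$ monotone operator over $\omega^\omega$ closes off by stage $\omega_1$. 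You instead re-run the tree argument of Lemma~\ref{lem-EquivChar} over $\K_2$: when $s \approx t$ the tree $T_{s,t}$ of finite sequences from $\omega^\omega$ witnessing $\nsimeq$ is well-founded, and since the Kleene--Brouwer bound on its rank is unavailable over an uncountable alphabet, you code $T_{s,t}$ as an arithmetical-in-$s\oplus t$ (hence analytic) well-founded relation on $\omega^\omega$ and apply the Kunen--Martin theorem to conclude the rank is countable. This is a correct alternative: it effectively proves the relevant instance of Cenzer's closure-ordinal bound by hand, and your diagnosis of the obstacle (Kleene--Brouwer needs a countable alphabet) and its repair (definability plus Kunen--Martin boundedness) is exactly right. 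As a bonus, the lightface Kunen--Martin/$\Sigma^1_1$-boundedness argument gives the sharper bound $s \sim_\alpha t$ for some $\alpha < \omega_1^{s \oplus t}$ essentially for free; the paper obtains this refinement separately in the remark following the theorem by citing Cenzer's Proposition~4.9.
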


\begin{proof}
The proof that $\ord(\K_2) \leq \omega_1$ follows the proof of Theorem~\ref{thm:upper}.  Let $\Omega$ denote the set of 
closed terms over $\K_2$, coded as elements of $\omega^\omega$ in some effective way.  Notice that $s \simeq t$ is an arithmetical property of (the codes for) $s$ and $t$.

Consider the operator 
$\Gamma \colon \PP(\Omega\times\Omega) \imp \PP(\Omega\times\Omega)$
defined by
\begin{align*}
\Gamma(X) = \bigset{(s, t) \in \Omega\times\Omega : \forall x \in \K_2 \; (s x, t x) \in X}.
\end{align*}
Define $\Gamma_\alpha$ for every ordinal $\alpha$ by 
\begin{align*}
\Gamma_0 &= \bigset{(s,t) : s \simeq t} \\
\Gamma_{\alpha+1} &= \Gamma(\Gamma_\alpha) \\ 
\Gamma_\gamma &= \bigcup_{\alpha < \gamma} \Gamma_\alpha \hspace{1cm} \text{for $\gamma$ a limit ordinal.}
\end{align*}
Then for closed terms $s$ and $t$, $s \sim_\alpha t$ if and only if $(s, t) \in \Gamma_\alpha$.

We observe that the operator $\Gamma$ is $\Pi^1_1$ and monotone.  Since $X$ is now a subset of $\omega^\omega$, this has to be understood in terms of Kleene's recursion in higher types.  The predicate $(s x, t x) \in X$ is indeed recursive in $X$, as it requires only one oracle query to $X$.  The closure ordinal for $\Pi^1_1$ monotone operators such as $\Gamma$ is at most $\omega_1$ by Cenzer~\cite{Cenzer}*{Theorem~4.1} (see also \cite{Sacks}*{Corollary~III.8.10}), so $\ord(\K_2) \leq \omega_1$.

One may show that $\omega_1 \leq \ord(\K_2)$ by constructing for each $\alpha < \omega_1$ elements $f_\alpha$ and $g_\alpha$ of $\K_2$ such that $f_\alpha \not\sim_\alpha g_\alpha$ and $f_\alpha \sim_{\alpha +1} g_\alpha$ in a similar spirit to the proof of Theorem~\ref{thm:lower}.  At successor stages, take $f_{\alpha + 1} = K f_\alpha$ and $g_{\alpha + 1} = K g_\alpha$ as before.  At limit stages, choose a sequence of ordinals $\beta_0 < \beta_1 < \beta_2 < \cdots$ with $\alpha = \lim_n \beta_n$, and directly define $f_\alpha$ and $g_\alpha$ so that $f_\alpha \cdot h = f_{\beta_{h(0)}}$ and $g_\alpha \cdot h = g_{\beta_{h(0)}}$ for all $h \in \K_2$.  Instead, we use the embeddings of $\K_1^X$ into $\K_2$ for $X \subseteq \omega$ to show that $\omega_1 \leq \ord(\K_2)$.  Thus consider any $X \subseteq \omega$ and the embedding $F \colon \K_1^X \imp \K_2$ of Theorem~\ref{thm-K1K2}.  Identify $\K_1^X$ with its image under $F$ in order to view $\K_1^X$ as a sub-pca of $\K_2$.  Then Theorem~\ref{thm-K1K2} tells us that for all ordinals $\alpha$ and all closed terms $s$ and $t$ over $\K_1^X$, $s \sim_\alpha t$ in $\K_1^X$ if and only if $s \sim_\alpha t$ in $\K_2$.  In particular, $\ord_{\K_1^X}(s,t) < \infty \Longrightarrow \ord_{\K_2}(s,t) < \infty$ for all closed terms $s$ and $t$ over $\K_1^X$.  Therefore $\ord(\K_1^X) \leq \ord(\K_2)$ by Proposition~\ref{prop-OrdSubPCA}.  Furthermore, $\ord(\K_1^X) = \omega_1^X$ by Theorem~\ref{thm:K1} relativized to $X$.  Thus $\omega_1^X \leq \ord(\K_2)$ for every $X$.  
Now the supremum of the $\omega_1^X$ is $\omega_1$, 
because every $\alpha<\omega_1$ is $X$-computable for some $X$, 
hence $\omega_1 \leq \ord(\K_2)$.
\end{proof}

Consider any $f$ and $g$ in $\K_2$, and let $\Gamma$ be as in the proof of Theorem~\ref{thm-K2ord}.  By Cenzer~\cite{Cenzer}*{Proposition~4.9} (see also \cite{Sacks}*{Theorem~III.8.9}), if $(f, g) \in \Gamma_\alpha$ for some $\alpha$, then $(f, g) \in \Gamma_\alpha$ for an $\alpha < \omega_1^{f \oplus g}$.  That is, if $f \approx g$ in $\K_2$, then $f \sim_{\omega_1^{f \oplus g}} g$.  It would be interesting to give a fine analysis of the $\sim_\alpha$ relations for $\alpha < \omega_1$ on $\K_2$, but we do not pursue this direction here.

We conclude this section with a remark about the effective 
version of $\K_2$. This version is called $\Keff$ in 
Longley and Norman~\cite{LongleyNormann}, and it is obtained 
by restricting $\K_2$ to total computable elements 
$f\in\omega^\omega$. This is again a pca because the combinators 
$K$ and $S$ in $\K_2$ are computable. 

\begin{theorem} \label{ordKeff}
$\ord(\Keff) = \OCK$.
\end{theorem}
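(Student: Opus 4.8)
The plan is to establish the two inequalities $\OCK \leq \ord(\Keff)$ and $\ord(\Keff) \leq \OCK$ separately.

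\emph{Lower bound.} I would use the embedding $F \colon \K_1 \imp \K_2$ produced in the proof of Theorem~\ref{thm-K1K2}, noting that this $F$ is computable; hence each value $F(n)$ is a total computable element of $\omega^\omega$, uniformly in $n$, so $F$ in fact embeds $\K_1$ into $\Keff$. Its image $F(\K_1)$ is a sub-pca of $\Keff$, and $F$ is an isomorphism of $\K_1$ onto $F(\K_1)$ (as partial applicative structures, hence as pcas), so the relations $\sim_\alpha$ transfer exactly between $\K_1$ and $F(\K_1)$. Now fix $\alpha < \OCK$ and use Theorem~\ref{thm:lower} to choose $a_\alpha, b_\alpha \in \K_1$ with $a_\alpha \not\sim_\alpha b_\alpha$ and $a_\alpha \sim_{\alpha+1} b_\alpha$; put $f_\alpha = F(a_\alpha)$ and $g_\alpha = F(b_\alpha)$. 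On one hand, $a_\alpha \sim_{\alpha+1} b_\alpha$ in $\K_1$ yields $f_\alpha \sim_{\alpha+1} g_\alpha$ in $\K_2$ by Theorem~\ref{thm-K1K2}, hence $f_\alpha \sim_{\alpha+1} g_\alpha$ in $\Keff$, since $\sim_\beta$ in a pca always implies $\sim_\beta$ in each of its sub-pcas (the transfinite induction used to prove Proposition~\ref{prop-OrdSubPCA}). On the other hand, $a_\alpha \not\sim_\alpha b_\alpha$ in $\K_1$ gives $f_\alpha \not\sim_\alpha g_\alpha$ in $F(\K_1)$ by the isomorphism, and the contrapositive of the same induction, applied to the sub-pca $F(\K_1)$ of $\Keff$, upgrades this to $f_\alpha \not\sim_\alpha g_\alpha$ in $\Keff$. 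Thus ${\sim_\alpha} \neq {\sim_{\alpha+1}}$ on $\Keff$ for every $\alpha < \OCK$, so $\ord(\Keff) \geq \OCK$. I should stress that Theorem~\ref{thm-K1K2} is genuinely needed for the $\sim_{\alpha+1}$ half: equivalence inside $F(\K_1)$ is too weak to survive passage to the larger pca $\Keff$, whereas equivalence in $\K_2$ passes down to $\Keff$ for free.

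\emph{Upper bound.} Here I would invoke Theorem~\ref{thm:countable}. The pca $\Keff$ is countable, and I claim its partial application is partial computable relative to an arithmetical oracle $X$. To present the domain of $\Keff$ as $\omega$: the oracle $\emptyset''$ decides both totality of partial computable functions and extensional equality of total computable ones, so using it one can effectively list an injective sequence $(e_n)_{n \in \omega}$ of indices with $\{\Phi_{e_n} : n \in \omega\}$ equal to the set of all total computable functions; then $n \mapsto \Phi_{e_n}$ is a bijection $\omega \to \Keff$, and transporting $\Keff$'s application along it produces a partial operation on $\omega$ whose graph $\{(m,n,k) : \Phi_{e_m} \cdot_{\K_2} \Phi_{e_n}\da = \Phi_{e_k}\}$ is arithmetical in $\emptyset''$: that the continuous functional coded by a computable function is total at a computable input, and that two total computable functions agree, are both $\Pi^0_2$ conditions on the indices $e_m, e_n, e_k$, and these indices are themselves $\emptyset''$-computable from $m, n, k$. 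So $\Keff$'s application is partial computable relative to some arithmetical $X$, and Theorem~\ref{thm:countable} gives $\ord(\Keff) \leq \omega_1^X$. Finally $\omega_1^X = \OCK$, because every $X$-computable well-order is hyperarithmetical and hence isomorphic to a computable one by Spector's theorem~\cite{Spector}, so the $X$-computable ordinals are exactly the computable ordinals. Combining the two bounds gives $\ord(\Keff) = \OCK$.

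The main obstacle is the upper bound, specifically keeping the oracle $X$ arithmetical rather than accepting a crude estimate such as $\omega_1^{\O}$, which would only bound $\ord(\Keff)$ by an ordinal strictly larger than $\OCK$: one has to verify that both the chosen presentation of $\Keff$ with domain $\omega$ and the evaluation of $\K_2$-application on computable arguments really remain arithmetical, so that Theorem~\ref{thm:countable} yields exactly $\OCK$. The lower bound is essentially routine once one observes that $F$ is computable and keeps straight, at each step, whether the relations $\sim_\alpha$ and $\not\sim_\alpha$ are being read in $F(\K_1)$, $\Keff$, or $\K_2$.
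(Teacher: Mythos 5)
Your proof is correct, and its overall decomposition matches the paper's: embed $\K_1$ into $\Keff$ to transfer the witnesses from Theorem~\ref{thm:lower}, and bound $\ord(\Keff)$ above via Theorem~\ref{thm:countable} with an arithmetical oracle and the fact that $\omega_1^{\emptyset''} = \OCK$. Where you genuinely depart from the paper is the lower bound. The paper obtains the two-sided transfer of $\sim_\alpha$ between $\K_1$ and $\Keff$ by asking the reader to ``inspect the proof of Theorem~\ref{thm-K1K2}'' and notice that the argument goes through verbatim with $\Keff$ in place of $\K_2$. Your sandwich argument gets the same conclusion from the \emph{statement} of Theorem~\ref{thm-K1K2} alone: since $F(\K_1) \subseteq \Keff \subseteq \K_2$ as sub-pcas, the downward direction from the transfinite induction behind Proposition~\ref{prop-OrdSubPCA}\ref{it-OrdBoundTerm} gives, for closed terms over $F(\K_1)$, implications from $\sim_\alpha$ in $\K_2$ to $\sim_\alpha$ in $\Keff$ to $\sim_\alpha$ in $F(\K_1)$, and the two ends of this chain coincide by Theorem~\ref{thm-K1K2} together with the isomorphism $\K_1 \cong F(\K_1)$; hence all three agree. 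Splitting the two halves as you do (the $\sim_{\alpha+1}$ part pushed down from $\K_2$, the $\not\sim_\alpha$ part pushed up from $F(\K_1)$) is a cleaner way to exhibit ${\sim_\alpha} \neq {\sim_{\alpha+1}}$ on $\Keff$ for each $\alpha < \OCK$, and your parenthetical remark that the $\sim_{\alpha+1}$ half really does need the full strength of Theorem~\ref{thm-K1K2} (equivalence merely in the small sub-pca would not survive the larger quantifier range of $\Keff$) correctly identifies where the subtlety lives. Your upper bound reproduces the paper's: you flesh out the observation that $\emptyset''$ suffices both to present $\Keff$ with domain $\omega$ and to decide its application graph, and you justify $\omega_1^{\emptyset''} = \OCK$ via Spector's theorem rather than the paper's citation of \cite{Sacks}*{Corollary~II.7.4} --- these are equivalent routes.
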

\begin{proof}
Using $\emptyset''$, we can index the total computable functions and partially compute the partial application operation of $\Keff$.  
Therefore $\ord(\Keff) \leq \omega_1^{\emptyset''} = \OCK$, 
where the inequality is by Theorem~\ref{thm:countable} and the equality is 
by~\cite{Sacks}*{Corollary~II.7.4}.  The reverse inequality $\OCK \leq \ord(\Keff)$ can be proved in the same way as Theorem~\ref{thm-K2ord}.  The embedding $F \colon \K_1 \imp \K_2$ of Theorem~\ref{thm-K1K2} produces computable elements of $\K_2$.  Thus $F$ embeds $\K_1$ into $\Keff$.  By inspecting the proof of Theorem~\ref{thm-K1K2}, we see that again for all ordinals $\alpha$ and all closed terms $s$ and $t$ of $\K_1$, $s \sim_\alpha t$ in $\K_1$ if and only if $F(s) \sim_\alpha F(t)$ in $\Keff$.  The proof now proceeds as in that of Theorem~\ref{thm-K2ord}, and we conclude that $\OCK = \ord(\K_1) \leq \ord(\Keff)$.
\end{proof}

\section*{Acknowledgments}
The authors thank Emanuele Frittaion, Russell Miller, and Michael Rathjen for helpful discussions.  
This project was partially supported by a grant from the John Templeton Foundation (\emph{A new dawn of intuitionism: mathematical and philosophical advances} ID 60842).  The opinions expressed in this work are those of the authors and do not necessarily reflect the views of the John Templeton Foundation.

\bibliographystyle{amsplain}
\bibliography{ShaferTerwijnPCAOrd}

\vfill

\end{document}